\documentclass[letterpaper,10pt,reqno,onefignum,onetabnum]{amsart}
\usepackage[english]{babel}
\usepackage{amsmath}
\usepackage{amsthm}
\usepackage{verbatim}
\usepackage[foot]{amsaddr}
\usepackage{delimset}
\usepackage[left]{lineno}
\usepackage[dvipsnames]{xcolor}
\usepackage{hyperref}
\hypersetup{
	colorlinks = true,
	linkcolor = OliveGreen,
	anchorcolor = OliveGreen,
	citecolor = OliveGreen,
	filecolor = OliveGreen,
	urlcolor = OliveGreen
}
\usepackage{algorithm}% http://ctan.org/pkg/algorithms
\usepackage{algorithmic}% http://ctan.org/pkg/algorithms
\usepackage{float}
\usepackage{lipsum}
\usepackage{amsfonts}
\usepackage{amssymb}
\usepackage{graphicx}
\usepackage{epstopdf}
\usepackage{cancel}
\usepackage{cases}
\usepackage{multirow}
\ifpdf
\DeclareGraphicsExtensions{.eps,.pdf,.png,.jpg}
\else
\DeclareGraphicsExtensions{.eps}
\fi
\def\urlprefix{}

\newtheorem{teo}{Theorem}[section]
\newtheorem{prop}[teo]{Proposition}
\newtheorem{lem}[teo]{Lemma}
\newtheorem{cor}[teo]{Corollary}
\newtheorem{pro}[teo]{Problem}
\newtheorem{algo}[teo]{Algorithm}
\newtheorem{asume}[teo]{Assumption}
\newtheorem{notation}[teo]{Notation}
\newtheorem{rem}[teo]{Remark}

\usepackage{algorithmic}
\usepackage{tikz}
\usepackage{booktabs}%
\usetikzlibrary{matrix}
\usetikzlibrary{arrows}
\usepackage{mathrsfs}
\usepackage{bm}
\usepackage{color}
\usepackage{xcolor}
\usepackage{subcaption}

\newcommand{\N}{\mathbb N}

\newcommand{\R}{\mathbb R}

\renewcommand{\H}{\mathcal{H}}
\newcommand{\G}{\mathcal G}
\newcommand{\K}{\mathcal K}

\newcommand{\HH}{{\bm{\mathcal{H}}}}

\newcommand{\Id}{{\bf Id}}
\newcommand{\id}{\textnormal{Id}}
\newcommand{\x}{\bm x}
\newcommand{\y}{\bm y}
\newcommand{\bu}{\bm u}
\newcommand{\p}{\bm p}

\newcommand{\weak}{\rightharpoonup}
\newcommand{\ran}{\textnormal{ran}\,}
\newcommand{\dom}{\textnormal{dom}\,}

\newcommand{\zer}{\textnormal{zer}}
\newcommand{\fix}{\textnormal{Fix}\,}
\newcommand{\gra}{\textnormal{gra}\,}

\newcommand{\scal}[2]{{\left\langle{{#1}\mid{#2}}\right\rangle}}

\newcommand{\menge}[2]{\big\{{#1}~\big |~{#2}\big\}}

\newcommand{\RR}{\ensuremath{\mathbb{R}}}
\newcommand{\RP}{\ensuremath{\left[0,+\infty\right[}}

\newcommand{\RPP}{\ensuremath{\left]0,+\infty\right[}}

\newcommand{\RX}{\ensuremath{\left]-\infty,+\infty\right]}}

\newcommand{\weakly}{\ensuremath{\:\rightharpoonup\:}}
\newcommand{\minimize}[2]{\ensuremath{\underset{\substack{{#1}}}%
		{\mathrm{minimize}}\;\;#2 }}

\usepackage{geometry}
\numberwithin{equation}{section}

\numberwithin{equation}{section}

\DeclareFontEncoding{FMS}{}{}
\DeclareFontSubstitution{FMS}{futm}{m}{n}
\DeclareFontEncoding{FMX}{}{}
\DeclareFontSubstitution{FMX}{futm}{m}{n}
\DeclareSymbolFont{fouriersymbols}{FMS}{futm}{m}{n}
\DeclareSymbolFont{fourierlargesymbols}{FMX}{futm}{m}{n}
\DeclareMathDelimiter{\nr}{\mathord}{fouriersymbols}{152}{fourierlargesymbols}{147}

\DeclareMathOperator*{\argmin}{arg\,min}
\DeclareMathDelimiter{\nr}{\mathord}{fouriersymbols}{152}{fourierlargesymbols}{147}
\DeclareMathAlphabet{\mathpzc}{OT1}{pzc}{m}{it}

\title[NFB for solving non-monotone inclusions]{Nonlinear Forward-Backward Algorithm for Solving Non-monotone+Lipschitz Inclusions with Applications to Adjoint Mismatch Problems}

\author{Jean-Christophe Pesquet$^{1}$}
\author{Fernando Rold\'an$^{2}$}
\address{$^{1}$Universit\'e Paris-Saclay, CentraleSup\'elec, CVN, Inria, Gif-sur-Yvette 91190, France.}
\address{$^{2}$ Departamento de Ingeniería Matemática and CI$^2$MA, Universidad de Concepción, Concepción, Chile.}
\email{jean-christophe.pesquet@centralesupelec.fr}
\email{fernandoroldan@udec.cl (corresponding author)}
\begin{document}
	%\linenumbers
	\begin{abstract}
		This article presents a novel algorithmic framework for solving a broad class of
nonlinear inclusion problems involving non-monotone and Lipschitz continuous operators.
Our primary motivation originates from the study of adjoint mismatch problems,
which frequently arise in inverse problems and data science
when the adjoint of a linear measurement operator is replaced by an approximation.
Because this approximation may inherently destroy classical monotonicity properties,
standard convergence guarantees for splitting methods may no longer apply. To address this,
we consider a Nonlinear Forward-Backward algorithm
and rigorously establish its convergence without assuming monotonicity.
By leveraging a warped resolvent formulation and semimonotonicity  assumptions,
we derive explicit conditions ensuring both weak convergence and, under stronger assumptions, R-linear convergence.
Furthermore, our theoretical analysis 
yields new convergence guarantees for several popular
methods, including the Condat–V\~u and Forward-Half-Reflected-Backward algorithms.
Finally, we illustrate our theoretical findings and demonstrate the applicability
%validity 
of our approach through numerical experiments in signal recovery.
		\par
		\bigskip
		
		\noindent \textbf{Keywords.} {\it Splitting algorithms, convergence analysis, fixed point theory, convex optimization, adjoint mismatch}
		\par
		\bigskip \noindent
		2020 {\it Mathematics Subject Classification.} {47H05, 47H10, 65K05, 90C25.}
		%62H35, 94A08,
		
	\end{abstract}
	
	\maketitle

	\section{Introduction}
	The main objective of this paper is to develop numerical methods for solving the following problem.
	\begin{pro}\label{pro:main}
		Let $\H$ and $\G$
		be real Hilbert spaces. Let $L\colon \H \to \G$ be 
		a bounded linear operator with adjoint $L^*\colon \G \to \H$.
		Let $A\colon \H \to 2^\H$ and $B\colon\G \to 
		2^\G$ 
		be set-valued operators, 
		let 
		$C\colon\H \to \H$ be a 
		$\beta$-cocoercive operator for $\beta \in \RPP$, and let $D\colon\H 
		\to \H$ be a $\vartheta$-Lipschitzian operator for $\vartheta \in 
		\RP$. The problem is to
        \begin{equation}\label{eq:mainPD} 
			\text{find }  (x,u) \in \H \times \G \text{ such that } 
			\begin{cases}
				0 &\in (A+C+
                D)x+L^*u\\
				u &\in B Lx ,
			\end{cases} 
		\end{equation}
		under the assumption that its solution set, denoted by 
		$\bm{Z}$,  is nonempty.
	\end{pro}
    This problem encompasses several scenarios in convex programming \cite{Combettes2018MP}, game theory \cite{Nash13}, data science \cite{CombettesPesquet2021strategies}, and image recovery \cite{BotHendrich2014TV,Briceno2011ImRe}, among others.
	Note that, given $(\hat{x},\hat{u}) \in \bm{Z}$, $\hat{x}$ is a 	solution to the primal inclusion
	\begin{equation}\label{eq:primalinclu}
		\text{find }  x \in \H \text{ such that } 
		0 \in (A+L^*BL+C+D)x,
	\end{equation}
	and $\hat{u}$ is a solution to the dual inclusion
	\begin{equation}\label{eq:dualinclu}
		\text{find }  u \in \G \text{ such that } 
		0 \in  B^{-1}u - L(A+C+D)^{-1}(-L^*u).
	\end{equation}
	An instance of Problem~\ref{pro:main} is the following optimization  problem.
	\begin{pro}\label{prob:problemopti}
		%In the context of Problem~\ref{pro:main}, 
		Let $(\mu_f,\mu_g)\in \R^2$ and $(\beta,\vartheta_d)\in \RPP\times\RP$.
		Let $\H$, $\G$, and   $\K$
		be real Hilbert spaces.  
		Let $L\colon \H \to \G$ and $T \colon \H \to \K$ be bounded lnear operators.
		Let $f\colon \H \to ]-\infty,+\infty]$ be a proper, 
		lower-semicontinuous, 
		$\mu_f$-convex  function, let $g\colon \G \to ]-\infty,+\infty]$ be a proper,  
		lower-semicontinuous, 
		$\mu_g$-convex function, let
		$h\colon \H \to \R$ be a differentiable convex function
		with a $1/\beta$-Lipschitzian gradient, and let $d\colon \K \to \R$ be a differentiable 
		convex function with a $\vartheta_d$-Lipschitzian 
		gradient. The problem is to 
		\begin{equation}\label{eq:problemopti}
			\minimize{x\in \H}f(x)+g(Lx)+h(x)
			+ d(Tx),
		\end{equation}
		under the assumption that the set of solutions is nonempty.
	\end{pro}
	Indeed, under standard qualification conditions, Problem~\ref{prob:problemopti} can be written as \eqref{eq:primalinclu} by considering  
    $A=\partial f$, $B=\partial g$, $C=\nabla h$, $D=T^*\circ\nabla d\circ T$, and $\vartheta = \|T\|^2\vartheta_d$, where $\partial f$ (resp. $\partial g$) denotes the Fréchet subdifferential of $f$ (resp. $g$). 
    In turn, replacing $T^*$ in the definition of $D$ by a bounded linear operator $K\colon\K\to\H$ amounts to introducing an \emph{adjoint mismatch}. In this case, $D$ is no longer guaranteed to be a cocoercive operator, but it is $\vartheta$-Lipschitzian with $\vartheta =\|K\|\|T\| \vartheta_d$. Even though $A$, $B$, and $C$ are monotone operators, the monotonicity of the overall inclusion is not guaranteed.

	If the operators $A$, $B$, 
	and $D$ are monotone, then \eqref{eq:mainPD} defines a monotone inclusion, which can be solved by the Primal-Dual Method with Block Triangular Resolvent (PDBTR) proposed in \cite{MorinBanertGiselsson2022}. This method is derived from the Nonlinear Forward-Backward algorithm under suitable metrics and Lipschitzian operators (see also the related warped-resolvent framework in \cite{BuiCombettesWarped2020,Combettes2024AN,Giselsson2021NFBS,TangMM20251,TangMM20252}). In particular, when $D=0$, PDBTR reduces to the Condat--V\~u (CV) algorithm \cite{Condat13,Vu13} and to Chambolle--Pock (CP) if $C=0$ \cite{ChambollePock2011}. On the other hand, when $L=0$ and $B=0$, it reduces to the Forward-Half-Reflected-Backward (FHRB) algorithm proposed in \cite{Malitsky2020SIAMJO}. Moreover, if $L=0$ and $D=0$, the Forward-Backward (FB) algorithm is recovered \cite{passty1979JMAA}. If $D=0$, $\H=\G$, and $L=\id$, Problem~\ref{pro:main} can be solved by the Three-Operator Splitting (TOS) proposed in \cite{DavisYin2017}, which further reduces to Douglas--Rachford (DR) splitting \cite{DR1956,Eckstein1992,Lions1979SIAM} if $C=0$. On the other hand, if $B=0$ and $L=0$, Problem~\ref{pro:main} can be solved by the Forward-Backward-Half-Forward (FBHF) algorithm proposed in \cite{BricenoDavis2018} which reduces to FB when $D=0$ and to Forward-Backward-Forward (FBF) when $C=0$
	\cite{Tseng2000SIAM}. Other methods for solving this problem are available in \cite{AttouchBricenoCombettes2010,CombettesMinh2022,Comb13,CombettesEckstein2018MP,Roldan20254op}. 

    The adjoint mismatch problem arises when the adjoint $T^*$ of a linear measurement operator $T$ must be approximated because of physical or computational limitations, or efficiency requirements. It is particularly common in inverse problems such as computed tomography, where its importance was first highlighted in \cite{ZengGullberg2000}, and has since been widely studied \cite{ChouzenouxMismatch2023,ChouzenouxMismatch2021,ChouzenouxPesquetRoldan2023,DongHansen2019,ElfvingHansen2018,LorenzRose2018,LorenzMMCP2023,Savanier2022}. Algebraic analyses of mismatched quadratic problems corresponding to \eqref{eq:problemopti} with $f=g=h=L=0$ and $d=\widetilde{d}:=\alpha\|\cdot\|^2/2$, $\alpha\in\RP$, were provided in \cite{DongHansen2019,ElfvingHansen2018}, while a mismatched Kaczmarz method was studied in \cite{LorenzRose2018}. Mismatched and preconditioned versions of FB were proposed in \cite{ChouzenouxMismatch2021,Savanier2022}. For $h=0$ and $d=\widetilde{d}$, mismatched versions of the CV, Loris--Verhoeven \cite{LorisVerhoeven2011}, and Combettes--Pesquet \cite{CombettesPesquet2011PD} algorithms were developed in \cite{ChouzenouxMismatch2023}, allowing iteration-dependent approximations $K_n\colon\K\to\H$ of $K$. A mismatched CP method was introduced in \cite{LorenzMMCP2023}, whereas mismatched FBHF and Forward--Douglas--Rachford–Forward methods \cite{RyuVu2020} were proposed in \cite{ChouzenouxPesquetRoldan2023} for Problem~\ref{pro:main} with $B=L=0$.

    To address scenarios beyond those covered by the preceding methods, we consider inclusion problems outside the monotone setting. Over the years, several authors have investigated algorithms that relax the monotonicity assumption on the operators. For instance, in \cite{CombettesPennanen2004,IusemPennanenSvaiter2003,Kohlenbach2022}, the proximal point algorithm (PPA) \cite{martinet1970} was studied for $\mu$-comonotone operators, including the cohypomonotone case $\mu<0$. 
	Moreover, the convergence of the PPA was analyzed under metric subregularity and submonotonicity in \cite{LukeTam2025}, for (strongly) quasiconvex functions in \cite{IusemLara2022,Lara2022}, and for prox-convex functions in \cite{GradLara2022,GradLaraMarcavilla2024}. Extragradient-type algorithms for $\rho$-comonotone operators were studied in \cite{Bohm2022,LeeKIM2021,TranDinh2024} and in \cite{DiakonikolasDaskalakisJordan2020} under a weak Minty condition. The convergence of the Douglas--Rachford algorithm and the Chambolle--Pock algorithm under a weak Minty condition was established in \cite{EvensLatafatPatrinos2025CP,EvensPasLatafatPatrinos2025DR}. In \cite{AlacaogluKimWright2024}, the authors studied the convergence of Halpern and Krasnosel'ski\u{\i}--Mann iterations for solving minimax problems under a weak Minty condition. Warped-resolvent algorithms involving $\mu$-comonotone operators have been studied in \cite{PapadimitriouVu2023,TrangNguyen2026}. Note that in these works the notions of comonotonicity and Lipschitz continuity are defined with respect to the operator inducing the warped resolvent. Finally, in \cite{BauschkeMoursiXianfu2021}, the authors investigated properties of $\mu$-monotone and $\rho$-comonotone operators and their resolvents.
	
	In this article, we propose a primal-dual method for solving Problem~\ref{pro:main}. Its convergence analysis is based on a nonlinear forward-backward algorithm for comonotone inclusions. In particular, we extend the convergence analysis of the Nonlinear Forward-Backward algorithm with momentum from \cite{MorinBanertGiselsson2022} to comonotone operators. We analyze conditions on operators $A$, $B$, $C$, and $D$ to ensure that the primal-dual operators associated with our method are comonotone, 
    thereby recovering, as a special case, the conditions established in \cite{EvensLatafatPatrinos2025CP} for CP beyond the monotone setting. Moreover, our conditions apply to CV, FHRB, and FB in the comonotone setting.	
	
	The structure of the paper is as follows.
	In Section~\ref{se:notation}, we present the
	notation and necessary mathematical background. In Section~\ref{sec:MNFB},
	we introduce an auxiliary inclusion, present the Nonlinear Forward-Backward algorithm, and study its convergence for comonotone operators. In Section~\ref{sec:MR}, we
	provide conditions guaranteeing the comonotonicity of the associated primal-dual operators and establish the convergence of the proposed methods for solving Problem~\ref{pro:main}. Finally, in Section~\ref{se:numexp},
	we present an application to adjoint mismatch problems and numerical experiments in signal recovery.
	
	\section{Notation and Preliminaries}\label{se:notation}
	Throughout this paper, $\H$ and $\G$ are real Hilbert spaces with inner 
	product $\scal{\cdot}{\cdot}$ and associated norm $\|\cdot 
	\|$. The symbols $\weakly$ and $\to$ denote the weak and strong 
	convergence, respectively. The identity operator is denoted by $\id$ and the space of bounded linear operators from $\H$ to $\G$ is denoted by $\mathcal{B}(\H,\G)$. Given a linear operator $M \in \mathcal{B}(\H,\G)$, we denote its adjoint 
	by $M^* \in \mathcal{B}(\G,\H)$ and, when $\G=\H$ define $\scal{\cdot}{\cdot}_{M}=\scal{\cdot}{M\cdot}$. If  
    $S \in \mathcal{B}(\H,\H)$ is a self-adjoint strongly monotone operator, i.e, there exists $\mu\in \RPP$ such that, for every $x \in \H$, $\scal{Sx}{x}\geq \mu \|x\|^2$, then, $\scal{\cdot}{\cdot}_{S}$ is an inner product on $\H$. The norm of $x\in \HH$ induced by $S$ is denoted by $\|x\|_S=\sqrt{\scal{x}{x}_{S}}$. 
    Let $\mathcal{E}\subset\H$ and $R\colon \mathcal{E} \to \H$ and let $\beta \in \left]0,+\infty\right[$. The operator $R$ is 
	$\beta$-cocoercive with respect to $S$ if 
	\begin{equation} \label{def:coco}
		(\forall x \in \mathcal{E}) (\forall y \in \mathcal{E})\quad \langle x-y \mid Rx-Ry 
		\rangle 
		\geq \beta \|Rx - Ry 
		\|^2_{S^{-1}}.
	\end{equation}
	Note that this definition is equivalent to the cocoercivity of ${S}^{-1}\circ R$ in the Hilbert space $(\H,\scal{\cdot}{\cdot}_{{S}})$ (see \cite[Section~1.2]{MorinBanertGiselsson2022}).
	Similarly, 
	$R$ is said to be $1/\beta$-Lipschitzian with respect to $S$ if 
	\begin{equation} \label{def:lips}
		(\forall x \in \mathcal{E}) (\forall y \in \mathcal{E})\quad \beta\|Rx-Ry\|_{S^{-1}} \leq  \|x 
		- y 
		\|_{S}.
	\end{equation}	
	Obviously, \eqref{def:coco} implies
	\eqref{def:lips}. We say that $R$ is nonexpansive with respect to $S$ if it is $1$-Lipschitzian with respect to $S$.
	The set of fixed points of $R$ is $\fix R = \menge{x \in \mathcal{E}}{x=Rx}$.
	Let $2^\H$ be the power set of $\H$ and let $A\colon\H \rightarrow 2^{\H}$ be a set-valued operator.
	The domain, range, set of zeros, and graph of $A$ 
	are 
	$\dom\, A = \menge{x \in \H}{Ax \neq  \varnothing}$,
	$\ran\, A = \menge{u \in 
		\H}{(\exists x \in \H)\,\, u \in Ax}$,  $\zer A = 
	\menge{x \in \H}{0 \in Ax}$,
	and $\gra A = \menge{(x,u) \in \H \times \H}{u \in Ax}$, respectively. Moreover, the inverse of $A$ is the operator
    $A^{-1}\colon\H\to 2^\H
\colon u\mapsto\menge{x\in\H}{u\in Ax}$ and its resolvent is $J_A = (\id+A)^{-1}$.
	Let $M\in \mathcal{B}(\H,\H)$ and $N\in \mathcal{B}(\H,\H)$ be self-adjoint (not necessarily strongly monotone) operators. The following extension of monotonicity was considered in \cite{EvensLatafatPatrinos2025CP}. The operator $A$ is $(M,N)$-semimonotone on $\mathcal{S}\subset \H^2$ if
	\begin{equation}\label{eq:defrhomon}
		(\forall (y,v)\in\mathcal{S} )( \forall (x,u) \in \gra A) \quad \scal{x-y}{u-v} 
		\geq \scal{x-y}{M(x-y)}+\scal{u-v}{N(u-v)}.
	\end{equation}
    This is also equivalent to the fact that $A^{-1}$ is $(N,M)$-semimonotone on 
    $\check{\mathcal{S}}= \{(u,x)\in\HH^2\mid (x,u)\in \mathcal{S}\}$.
	If $\mathcal{S} = \gra A$, the operator $A$ is called $(M,N)$-semimonotone. When $N=0$ (resp. $M=0$), we refer to 
	$M$-monotonicity (resp. $N$-comonotonicity). When $M=0$ and
    $N= \beta S^{-1}$, this reduces to $\beta$-cocoercivity with respect to $S$.
	Additionally, $A$ is maximally $(M,N)$-semimonotone if it is $(M,N)$-semimonotone and its graph is 
	maximal with respect to inclusion among the graphs of $(M,N)$-semimonotone operators. 
	Given $(\mu,\rho) \in \R^2$, if $M=\mu \id$ and $N=\rho \id$, for convenience, 
	we simply say that $A$  is $(\mu,\rho)$-semimonotone on $\mathcal{S}$.  The following table summarizes various notions of monotonicity which are particular cases of $(M,N)$-semimonotonicity.
	\begin{table}[h!]
		\centering
		\setlength{\tabcolsep}{2.5pt}
		\begin{tabular}{l|l|l|c}
			$M$ & $N$ & Property & Reference\\ \hline 
			$\mu \id$ & $\rho \id$ & $(\mu,\rho)$-semimonotone & \cite{EvensPasLatafatPatrinos2025DR}\\
			$\mu M^2$ & $0$ & $(\mu M)$-hypomonotone & \cite{PapadimitriouVu2023,TrangNguyen2026}\\
			$\mu \id$ & $0$ & $\mu$-monotone & \cite{BauschkeMoursiXianfu2021} \\
			$0$ & $\rho \id$& $\rho$-comonotone & \cite{BauschkeMoursiXianfu2021} \\
			$-|\mu|\id$ & $0$ & $|\mu|$-hypomonotone & \cite{CombettesPennanen2004} \\
			$0$ & $-|\rho|\id$ & $|\rho|$-cohypomonotone & \cite{CombettesPennanen2004} \\
			$|\mu|\id$ & $0$ & $|\mu|$-strongly monotone &  \cite{bauschkebook2017} \\
			$0$ & $|\rho|\id$ & $|\rho|$-cocoercive &  \cite{bauschkebook2017} \\
			$0$ & $0$ & monotone & \cite{bauschkebook2017} \\
		\end{tabular}
		\caption{Special cases of $(M,N)$-semimonotonicity. Here, $(\mu,\rho)\in \R^2$.}\label{Tab:NofM}
	\end{table} 
	Note that \cite{PapadimitriouVu2023} also introduces the notion of $(\rho M,W)$-hypomonotonicity where $W \colon \H \to \H$ is a single-valued operator (not necessarily linear). Let $A$ be a maximally $(M,N)$-semimonotone operator.
    %is defined by $J_A:=(\id+A)^{-1}$. 
    If $M=\mu\id$, $\mu > -1$, $N=\rho\id$, and $\rho\geq 0$, $J_A$ is everywhere defined and single-valued on $\H$. 
    Since $J_A = \id - J_{A^{-1}}$, if $M=\mu\id$, $\mu \geq  0$, $N=\rho\id$, and $\rho > -1$, $J_A$ is also everywhere defined and single-valued on $\H$   \cite{BauschkeMoursiXianfu2021}.
    %Note that, if $A$ is $\rho$-monotone, then, for every $\gamma \in \RPP$, $\gamma A$ is $\gamma\rho$-monotone.
	We say that $\gra A$ is sequentially weak-strong closed if, for every sequence $(x_n,u_n)_{n \in \N}$ in $\gra A$ such that $x_n \weakly x$ and $u_n \to u$, we have $u \in Ax$. An operator $T\colon\mathcal{E}\subset \H\to\H$ is demiclosed at $y\in\H$ if, for every $x\in\mathcal{E}$
    and every
sequence $(x_n)_{n\in\N}$ in $\mathcal{E}$ such that
$x_n\weakly x$ and $Tx_n\to y$, we have $Tx=y$.
The operator $T$ is demiclosed if it is demiclosed at every point of $\H$.
 	
	We denote by $\Gamma_0(\H)$ the class of proper, lower 
	semicontinuous, convex functions $f\colon\H\to\RX$. Let 
	$f\in\Gamma_0(\H)$.
	The Fenchel conjugate of $f$ is 
	defined by $f^*\colon u\mapsto \sup_{x\in\H}(\scal{x}{u}-f(x))$ and we have
	$f^*\in \Gamma_0(\H)$. The Moreau subdifferential of $f$ is the maximally monotone operator
	$\partial f\colon x\mapsto \menge{u\in\H}{(\forall y\in\H)\:\: 
		f(x)+\scal{y-x}{u}\le f(y)}$. In addition,
	$(\partial f)^{-1}=\partial f^*$ 
	and the set of 
	minimizers of $f$ is $\zer\,\partial f = \argmin_{x\in \H}f$. 
	
	Examples of semimonotone operators can be found in \cite[Section~6]{BauschkeMoursiXianfu2021}, where properties and examples of hypoconvex functions, as well as the semimonotonicity properties of their \emph{generalized subdifferentials} (Clarke--Rockafellar, Mordukhovich, and Fréchet), are presented. In addition, several examples of semimonotone problems can be found in \cite{EvensLatafatPatrinos2025CP,EvensPasLatafatPatrinos2025DR,PapadimitriouVu2023}.

	For further properties of monotone operators,
	nonexpansive mappings, and convex analysis, the 
	reader is referred to \cite{bauschkebook2017}. 
	
	We conclude this section by presenting two lemmas, which will be useful for dealing with sums of semimonotone operators.
	\begin{lem}\label{lem:des}
		Let $M\in\mathcal{B}(\H,\H)$ be a self-adjoint monotone operator and let $(\rho,\beta) \in \R^2$ be such that $\rho+\beta>0$. The following inequality holds:
		\begin{equation}
			(\forall (x,y) \in \H^2) \quad \quad \rho \|x\|^2_M+\beta\|y\|^2_M \geq \frac{\beta\rho}{\rho+\beta}\|x+y\|^2_M.
		\end{equation}
	\end{lem}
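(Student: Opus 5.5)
The plan is to prove the inequality through an exact algebraic identity, not an estimate. Since $M$ is self-adjoint and monotone, the bilinear form $\scal{\cdot}{\cdot}_M=\scal{\cdot}{M\cdot}$ is symmetric and positive semidefinite. It need not be definite, but that is not required here. In particular, for every $z\in\H$ we have $\|z\|_M^2=\scal{z}{Mz}\geq 0$, and
\begin{equation*}
\|x+y\|_M^2=\|x\|_M^2+2\scal{x}{y}_M+\|y\|_M^2 ,
\end{equation*}
where symmetry of the cross term uses $M=M^*$.

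The key step is to multiply the desired inequality by $\rho+\beta>0$, which preserves its direction. It therefore suffices to show that
\begin{equation*}
(\rho+\beta)\big(\rho\|x\|_M^2+\beta\|y\|_M^2\big)-\beta\rho\|x+y\|_M^2\geq 0 .
\end{equation*}
Expanding $\|x+y\|_M^2$ as above and collecting terms gives
\begin{equation*}
\rho^2\|x\|_M^2+\beta^2\|y\|_M^2-2\rho\beta\scal{x}{y}_M=\|\rho x-\beta y\|_M^2 .
\end{equation*}
This quantity is nonnegative by the monotonicity of $M$, and dividing by $\rho+\beta$ yields the claim. It is worth recording that equality holds exactly when $\|\rho x-\beta y\|_M=0$.

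There is no genuine obstacle. The only points needing care are that $\rho$ and $\beta$ may individually be negative, which is harmless because only $\rho+\beta>0$ is used when dividing, and that $\scal{\cdot}{\cdot}_M$ is merely a semi-inner product. This does not matter either, since the argument uses only symmetry, bilinearity and nonnegativity of $z\mapsto\scal{z}{Mz}$.
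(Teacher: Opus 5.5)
Your proof is correct and is essentially the paper's argument. The paper uses the same identity in normalized form, $\|(\rho x-\beta y)/(\rho+\beta)\|_M^2+\frac{\rho\beta}{(\rho+\beta)^2}\|x+y\|_M^2=\frac{\rho}{\rho+\beta}\|x\|_M^2+\frac{\beta}{\rho+\beta}\|y\|_M^2$, which is yours divided by $(\rho+\beta)^2$. It then multiplies by $\rho+\beta>0$ and drops the nonnegative first term.
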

	\begin{proof}
		%It follows from \cite[Corollary~2.15]{bauschkebook2017} that,
		We have
		\begin{align*}
			\left\|\frac{\rho x}{\rho+\beta}+\frac{\beta (-y)}{\rho+\beta}\right\|^2_M + \frac{\rho\beta}{(\rho+\beta)^2}\|x+y\|^2_M = \frac{\rho}{\rho+\beta}\|x\|^2_M+\frac{\beta}{\rho+\beta}\|y\|^2_M.
		\end{align*}
		Multiplying this identity by $\rho+\beta>0$ yields the result.
	\end{proof}
    \begin{lem}
    \label{le:sumsemimon3} 
    Let $S\in\mathcal{B}(\H,\H)$ be self-adjoint and strongly monotone, let $A\colon\H\to2^{\H}$, and let $C\colon\H\to 2^{\H}$. Let $(\nu_1,\nu_2,\rho_1,\rho_2) \in \R^4$ be such that $\rho_1+\rho_2>0$.
    Suppose that $A$ is $(\nu_1 S,\rho_1 S^{-1})$-semimonotone and $C$ is $(\nu_2 S,\rho_2 S^{-1})$-semimonotone. 
    Then, $A+C$ is $\big((\nu_1+\nu_2)S,\frac{\rho_1\rho_2}{\rho_1+\rho_2}S^{-1}\big)$-semimonotone.
    \end{lem}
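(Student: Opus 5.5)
Fix $(x,u)\in\gra(A+C)$ and $(y,v)\in\gra(A+C)$, and write $u=a+c$ and $v=a'+c'$ with $a\in Ax$, $c\in Cx$, $a'\in Ay$, and $c'\in Cy$. We must show
\begin{equation*}
\scal{x-y}{u-v}\geq (\nu_1+\nu_2)\|x-y\|_S^2+\frac{\rho_1\rho_2}{\rho_1+\rho_2}\|u-v\|_{S^{-1}}^2 .
\end{equation*}
To do this we split the inner product as $\scal{x-y}{u-v}=\scal{x-y}{a-a'}+\scal{x-y}{c-c'}$. Applying the semimonotonicity of $A$ to the pairs $(x,a),(y,a')$ and that of $C$ to $(x,c),(y,c')$, we obtain
\begin{equation*}
\scal{x-y}{u-v}\geq (\nu_1+\nu_2)\|x-y\|_S^2+\rho_1\|a-a'\|_{S^{-1}}^2+\rho_2\|c-c'\|_{S^{-1}}^2 .
\end{equation*}
Here we used $\scal{z}{\nu S z}=\nu\|z\|_S^2$ and $\scal{w}{\rho S^{-1}w}=\rho\|w\|^2_{S^{-1}}$.

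It remains to bound the two $\rho$-terms from below. Since $S$ is self-adjoint and strongly monotone, $S^{-1}$ is bounded, self-adjoint and monotone (in fact strongly monotone). We can therefore invoke Lemma~\ref{lem:des} with $M=S^{-1}$, $(\rho,\beta)=(\rho_1,\rho_2)$ (legitimate because $\rho_1+\rho_2>0$), and vectors $a-a'$ and $c-c'$. This gives
\begin{equation*}
\rho_1\|a-a'\|^2_{S^{-1}}+\rho_2\|c-c'\|^2_{S^{-1}}\geq\frac{\rho_1\rho_2}{\rho_1+\rho_2}\|(a-a')+(c-c')\|^2_{S^{-1}}=\frac{\rho_1\rho_2}{\rho_1+\rho_2}\|u-v\|_{S^{-1}}^2 .
\end{equation*}
Combining the two displays yields the claim.

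The only delicate points are the following. First, Lemma~\ref{lem:des} must hold for arbitrary signs of $\rho_1,\rho_2$ provided only that their sum is positive; it does, since its proof is an identity combined with the nonnegativity of $\|\cdot\|_M^2$ for monotone $M$. Second, the decomposition $u=a+c$ is valid for every element of $(A+C)x$ by the definition of the sum of set-valued operators. Everything else is direct substitution, so I expect no serious obstacle.
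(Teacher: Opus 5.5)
Your proposal is correct and follows the same route as the paper's proof: split the inner product across the graph decompositions of $A$ and $C$, apply each semimonotonicity inequality, and combine the two $S^{-1}$-terms with Lemma~\ref{lem:des} for $M=S^{-1}$. Your remark that Lemma~\ref{lem:des} needs only $\rho_1+\rho_2>0$, with no sign condition on each $\rho_i$, is also right.
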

        \begin{proof}
    For every $\big((x,u_1),(y,v_1)\big) \in (\gra A)^2$ and every $\big((x,u_2),(y,v_2)\big) \in (\gra C)^2$, we have
              \begin{align*}
                \scal{u_1+u_2-(v_1+v_2)}{x-y}&=\scal{u_1-v_1}{x-y}+\scal{u_2-v_2}{x-y}\\
                &\geq \nu_1\|x-y\|_{S}^2 +\rho_1\|u_1-v_1\|_{S^{-1}}^2+\nu_2\|x-y\|_{S}^2 +\rho_2\|u_2-v_2\|_{S^{-1}}^2\\
                &\geq (\nu_1+\nu_2)\|x-y\|_{S}^2 +\frac{\rho_1\rho_2}{\rho_1+\rho_2}\|(u_1+u_2)-(v_1+v_2)\|_{S^{-1}}^2,
            \end{align*}
            where the last inequality stems from Lemma~\ref{lem:des}.
    \end{proof}
    
    \begin{lem}
    \label{le:graclosedsemimon} 
    Let $S\in\mathcal{B}(\H,\H)$ be self-adjoint and strongly monotone, let $A\colon\H\to\H$, and let $C\colon\H\to \H$. Let $(\rho_1,\rho_2) \in \RPP\times \R$ be such that $\rho_1+\rho_2>0$.
    Suppose that $A$ is $\rho_1$-cocoercive with respect to S, and $C$ is $\rho_2 S^{-1}$-comonotone and weakly continuous.
    Then, $\gra(A+C)$ is sequentially weak-strong closed.
    \end{lem}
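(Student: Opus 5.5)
The plan is to argue directly from the semimonotonicity inequalities of $A$ and $C$ at the limit point $x$, instead of passing through a monotonicity property of $A+C$. Lemma~\ref{le:sumsemimon3} alone is not enough: if $\rho_2<0$, the constant $\rho_1\rho_2/(\rho_1+\rho_2)$ is negative, so $A+C$ is only cohypomonotone and the Minty-type argument breaks down.

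Set $T=A+C$. Take a sequence $(x_n)_{n\in\N}$ with $x_n\weakly x$ and $Tx_n\to u$; we must show $u=Tx$. Define
\begin{equation*}
e_n=Ax_n-Ax,\qquad r_n=Tx_n-Tx,\qquad d=u-Tx,
\end{equation*}
so that $r_n\to d$ strongly and $Cx_n-Cx=r_n-e_n$. Since $C$ is weakly continuous, $Cx_n\weakly Cx$. Hence $Ax_n=Tx_n-Cx_n\weakly u-Cx$, and therefore $e_n\weakly u-Cx-Ax=d$. All norms below are $\|\cdot\|_{S^{-1}}$.

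Applying the cocoercivity of $A$ with respect to $S$ and the $\rho_2S^{-1}$-comonotonicity of $C$ at the pair $(x_n,x)$ and summing gives
\begin{equation*}
\scal{x_n-x}{r_n}\ \geq\ \rho_1\|e_n\|^2+\rho_2\|r_n-e_n\|^2 .
\end{equation*}
The left-hand side tends to $0$, because it pairs a weakly null sequence with a strongly convergent one. For the right-hand side, expand the quadratic and complete the square:
\begin{equation*}
\rho_1\|e\|^2+\rho_2\|r-e\|^2=(\rho_1+\rho_2)\Big\|e-\tfrac{\rho_2}{\rho_1+\rho_2}r\Big\|^2+\tfrac{\rho_1\rho_2}{\rho_1+\rho_2}\|r\|^2 .
\end{equation*}
This is the same computation as in Lemma~\ref{lem:des}. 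Combining the two displays, using $\rho_1+\rho_2>0$ and $r_n\to d$, yields
\begin{equation*}
\limsup_{n\to\infty}\,(\rho_1+\rho_2)\Big\|e_n-\tfrac{\rho_2}{\rho_1+\rho_2}r_n\Big\|^2\le-\tfrac{\rho_1\rho_2}{\rho_1+\rho_2}\|d\|^2 .
\end{equation*}

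The key step, and the one I expect to be the crux, is to exploit the weak limit of $e_n$ rather than discard it. We have
\begin{equation*}
e_n-\tfrac{\rho_2}{\rho_1+\rho_2}r_n\weakly \tfrac{\rho_1}{\rho_1+\rho_2}d .
\end{equation*}
By weak lower semicontinuity of $\|\cdot\|_{S^{-1}}^2$,
\begin{equation*}
\tfrac{\rho_1^2}{\rho_1+\rho_2}\|d\|^2\le-\tfrac{\rho_1\rho_2}{\rho_1+\rho_2}\|d\|^2,
\quad\text{that is,}\quad
\rho_1(\rho_1+\rho_2)\|d\|^2\le0 .
\end{equation*}
Since $\rho_1>0$ and $\rho_1+\rho_2>0$, this forces $d=0$, i.e.\ $u=Tx$, so $\gra(A+C)$ is sequentially weak-strong closed.

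Two routine points remain to check. First, the equivalence of $\|\cdot\|_{S^{-1}}$ with $\|\cdot\|$, which follows from strong monotonicity of $S$, so that weak and strong convergence are unaffected by the change of norm. Second, the algebra of the completed square.
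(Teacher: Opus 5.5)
Your proof is correct, but it takes a different route from the paper's.

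\textbf{The paper's route.} The paper works with pairs $(x_n,x_m)$. It rewrites $\rho_1\|Ax_n-Ax_m\|^2_{S^{-1}}+\rho_2\|Cx_n-Cx_m\|^2_{S^{-1}}$ as $\rho_1\|(A+C)x_n-(A+C)x_m\|^2_{S^{-1}}$, minus a cross term, plus $(\rho_1+\rho_2)\|Cx_n-Cx_m\|^2_{S^{-1}}$. Since $(A+C)x_n-(A+C)x_m\to 0$, this shows that $(Cx_n)_{n\in\N}$ is strongly Cauchy. Weak continuity then identifies its limit as $Cx$. Finally, cocoercivity of $A$ is applied a second time, at $(x_n,x)$, to obtain $Ax_n\to Ax$.

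\textbf{Your route.} You work directly at $(x_n,x)$, where $r_n\to d$ need not vanish, so the cross term cannot simply be discarded. You compensate by computing the weak limit of $e_n$. Weak lower semicontinuity of $\|\cdot\|^2_{S^{-1}}$ applied to the completed square then gives $\rho_1(\rho_1+\rho_2)\|d\|^2_{S^{-1}}\le 0$ in one step.

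\textbf{Comparison.} Your argument is shorter and needs neither the Cauchy/completeness step nor a second use of cocoercivity. The paper's argument states as its output the strong convergences $Cx_n\to Cx$ and $Ax_n\to Ax$. These also follow from yours once $d=0$: your $\limsup$ bound then forces $e_n-\frac{\rho_2}{\rho_1+\rho_2}r_n\to 0$, hence $e_n\to 0$ strongly. Your opening remark about Lemma~\ref{le:sumsemimon3} is harmless but unnecessary, since neither proof uses any comonotonicity property of $A+C$ itself.
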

    
    \begin{proof}
    Let $(x,u)\in \H^2$ and let $(x_n)_{n\in\N}$ be a sequence of $\H$ such that $x_n\weakly x$ and $(A+C)x_n\to u$. For every $(n,m)\in\N^2$,
              \begin{align}
                &\scal{(A+C)x_n-(A+C)x_m}{x_n-x_m}\nonumber\\
                &=\scal{Ax_n-Ax_m}{x_n-x_m}+\scal{Cx_n-C x_m}{x_n-x_m}\nonumber\\
                &\geq \rho_1\|Ax_n-Ax_m\|_{S^{-1}}^2+ \rho_2\|C x_n-C x_m\|_{S^{-1}}^2\nonumber\\
               &=\rho_1\big(\|(A+C)x_n-(A+C)x_m\|_{S^{-1}}^2-
               2\scal{S^{-1}\big((A+C)x_n-(A+C)x_m\big)}{Cx_n-Cx_m}\big)\nonumber\\
               &\quad+(\rho_1+\rho_2)\|C x_n- C x_m\|_{S^{-1}}^2.
               \label{e:weakstronglemspec}
            \end{align}
            The weak convergence of
        $(x_n)_{n\in\N}$ implies that this sequence is bounded, whereas
the strong convergence of $((A+C)x_n)_{n\in\N}$ implies that 
            $(A+C)x_n-(A+C)x_m\to 0$, as $n\to+\infty$ and $m\to +\infty$.
            Consequently,
            \begin{equation*}
            \scal{(A+C)x_n-(A+C)x_m}{x_n-x_m}\to 0\quad  \text{and}\quad \|(A+C)x_n-(A+C)x_m\|_{S^{-1}}\to 0.
            \end{equation*}
            In addition,
            since $C$ is weakly continuous, 
            $Cx_n\weakly Cx$ and
            \begin{multline*}
   |\scal{S^{-1}\big((A+C)x_n-(A+C)x_m\big)}{Cx_n-Cx_m}| \\\le \|S^{-1}\|\|(A+C)x_n-(A+C)x_m\|\|Cx_n-Cx_m\|\to 0.
  \end{multline*}
            Using the positivity of $\rho_1+\rho_2$, it follows from \eqref{e:weakstronglemspec}
            that $\|C x_n- C x_m\|_{S^{-1}}\to 0$.
            Because the norm induced by $S^{-1}$ is equivalent to the original norm, $(Cx_n)_{n\in\N}$
            is a Cauchy sequence. Therefore, it converges strongly and since $Cx_n\weakly Cx$, its strong limit is 
            $Cx$.
            It follows that $Ax_n\to u-Cx$. By using now the cocoercivity of $A$,
            \begin{equation}
                \rho_1 \|A x_n-Ax\|^2_{S^{-1}} \leq \scal{A x_n-Ax}{x_n-x}.
            \end{equation}
            Since $Ax_n-Ax\to u-Cx-Ax$ and
            $x_n\weakly x$, $\scal{A x_n-Ax}{x_n-x}\to 0$,
            which implies that $Ax_n\to Ax$. In conclusion
            $u= (A+C)x$, which proves that
            $\gra(A+C)$ is sequentially weak-strong closed.
    \end{proof}
	
	\section{Nonlinear Forward-Backward for comonotone operators}\label{sec:MNFB}
	We first focus on the case when  $B=0$ and $D=0$ in Problem~\ref{pro:main}. In this specific case, the dual variable in Problem~\ref{pro:main} vanishes. Hence, the problem reduces to solving a primal inclusion and it can be addressed efficiently, for example, by the algorithms in \cite{BuiCombettesWarped2020,Giselsson2021NFBS,Lions1979SIAM,MorinBanertGiselsson2022,passty1979JMAA}. We investigate this primal problem under the assumption that the sum of the operators involved is comonotone. 
	For notational convenience, and for reasons that will become clear later, we use bold symbols for the variables in this section.
	
	Let us list our standing assumptions on the involved operators.
	\begin{asume}\label{assum:2}
		Let $\bm{\H}$ be the underlying real Hilbert space. Let 
		$\bm{S}\in\mathcal{B}(\bm{\H},\bm{\H})$ be a self-adjoint and strongly monotone operator, let $\bm{A}\colon\bm{\H}\to2^{\bm{\H}}$ be a set-valued operator, let $\bm{C}\colon\bm{\H} \to \bm{\H}$ be a $\beta$-cocoercive operator with respect to $\bm{S}$, for some $\beta\in \RPP$, and let $\bm{M}\colon \bm{\H}\to \bm{\H}$ be a single-valued operator. We suppose that
		\begin{enumerate}
			\item\label{assum:21} $(\bm{M}+\bm{A})^{-1}$ is single-valued and has full domain;
			\item\label{assum:22} $\gra(\bm{A}+\bm{C})$ is sequentially weak-strong  closed;
			\item\label{assum:23} 
			$\bm{A}$ is $(\rho \bm{S}^{-1})$-comonotone on $\pmb{\mathbb{S}}=\menge{(\x,-\bm{C}\x)\in\bm{\H}^2}{\x \in \zer (\bm{A}+\bm{C})}$  for some $\rho \in$\linebreak $]-\beta,+\infty[$.      
			\item \label{assum:24}  
            There exist $\zeta \in [0,1/2[$
            %[0,1/(2(1-4\hat{\rho}))[$ 
            and $\tau\in\, ]\underline{\tau}_\zeta,+\infty[$ such that $\tau \bm{M}- \bm{S}$ is $\zeta$-Lipschitzian with respect to $\bm{S}$
            where 
            \[
            \hat{\rho}=\min\{\rho,0\}\quad \text{and}
            \quad \underline{\tau}_\zeta = -\frac{\zeta\hat{\rho}}{\beta+\hat{\rho}}.
            \]
		\end{enumerate}
	\end{asume}
	Let us make some comments on these assumptions.
	\begin{rem}\ 
    \label{re:firstrem}
		\begin{enumerate}
            \item\label{re:firstremi} Since $\bm{S}$ is nonexpansive with respect to $\bm{S}$, it follows from Assumption~\ref{assum:2}\ref{assum:24} that $\bm{M} =(\tau \bm{M}-\bm{S}+\bm{S})/\tau$ is $\zeta_{\bm{M}}$-Lipschitzian with respect to $\bm{S}$, its Lipschitz modulus being $\zeta_{\bm{M}} = \xi/\tau$ and $\xi \in [1-\zeta,1+\zeta]$.
			\item Under Assumption~\ref{assum:2}\ref{assum:24}, if \(\bm A\) is maximally monotone, then \((\bm M+\bm A)^{-1}\) is single-valued and has full domain (see \cite[Proposition~3.1]{MorinBanertGiselsson2022}). Weaker conditions guaranteeing this assumption can be found in \cite[Proposition~3.8 \& Proposition~3.9]{BuiCombettesWarped2020}.
			\item Let $\bm{A}=\widetilde{\bm{A}}+\widetilde{\bm{B}}$ 
			with $\widetilde{\bm{A}}\colon \bm{\H} \to 2^{\bm{\H}}$ maximally $(-\rho)$-cohypomonotone for $\rho< 0$, $\widetilde{\bm{B}}\colon \bm{\H} \to \bm{\H}$ single-valued, and
			$\bm{M} = \Id/\tau - \widetilde{\bm{B}}$.
            Then, the operator $(\bm{M}+\bm{A})^{-1}=(\Id/\tau+\widetilde{\bm{A}})^{-1}$ is single-valued with full domain when $\rho/\tau> -1$, i.e., $\tau >-\rho$. In this situation, by considering $\bm{S}=\Id$, we have $\tau\bm{M}-\bm{S}=-\tau\widetilde{\bm{B}}$. Hence,
            if $\widetilde{\bm B}$ is
$\zeta_{\widetilde{\bm B}}$-Lipschitzian, where
$\zeta_{\widetilde{\bm B}}\in\left]0,-\rho^{-1}\min\left\{\beta+\rho,\frac12\right\}
\right[$,
then Assumptions~\ref{assum:2}\ref{assum:21} and
\ref{assum:2}\ref{assum:24} hold by choosing
$\tau\in\left]-\rho,
(2\zeta_{\widetilde{\bm B}})^{-1}\right[$.
			\item If $\bm{C}=\bm{0}$, Assumption~\ref{assum:2}\ref{assum:23} reduces to %the $\rho\bm{S}^{-1}$-comonotonicity of $\bm{A}$ in $\zer{\bm{A}}$, or, equivalently,  
			%the fact that $\bm{A}$ satisfies 
            the so-called weak Minty condition for $\bm{A}$ at every point of  $\zer{\bm{A}}$ (see for instance \cite{AlacaogluKimWright2024,ChoroburaNecoaraPesquet2025,DiakonikolasDaskalakisJordan2020,EvensLatafatPatrinos2025CP,EvensPasLatafatPatrinos2025DR}).
		\end{enumerate}	
	\end{rem}	
	The following proposition provides conditions on $\bm{A}$ and $\bm{C}$ for which Assumption~\ref{assum:2}\ref{assum:22}  holds.

	\begin{prop}\label{prop:A+Ccomo}
		Let $\bm{S}$ be a self-adjoint and strongly monotone linear operator, let $\bm{A}\colon \bm{\H} \to 2^{\bm{\H}}$ be a $\rho \bm{S}^{-1}$-comonotone operator for 
		some 
		$\rho \in \R$ and let $\bm{C}\colon\bm{\H} \to \bm{\H}$ be a $\beta$-cocoercive  
		operator  with respect to $\bm{S}$ for some 
		$\beta 
		\in \RPP$. The following statements hold:
		\begin{enumerate}
			\item\label{prop:A+Ccomo1} If $\rho\|\bm{S}^{-1}\|> - 1$, $\bm{T}:=\Id-J_{\bm{A}}$ is single-valued on $\ran(\Id+\bm{A})$. Moreover, if $\ran(\Id+\bm{A})$ is weakly sequentially closed, $\bm{T}$ is demiclosed.
			\item\label{prop:A+Ccomo2} If $\beta + {\rho} > 0$, then $(\bm{A}+\bm{C})$ is
			$\left(\frac{\beta{\rho}}{\beta+{\rho}}\bm{S}^{-1}\right)$-comonotone.
			\item\label{prop:A+Ccomo3} Suppose that $\beta +{\rho} > -\beta{\rho}\|\bm{S}^{-1}\|$ and $\ran(\Id+\bm{A}+\bm{C})$ is weakly sequentially closed. Then, $\gra(\bm{A}+\bm{C})$ is sequentially weak-strong  closed.
		\end{enumerate}
	\end{prop}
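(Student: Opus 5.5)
We prove the three items in order; the sum rule Lemma~\ref{le:sumsemimon3} and the algebra of Lemma~\ref{lem:des} do most of the work.

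For \ref{prop:A+Ccomo1}, we show $J_{\bm A}$ is single-valued on $\ran(\Id+\bm A)$, so $\bm T=\Id-J_{\bm A}$ is as well. Suppose $\x,\y\in J_{\bm A}\bm z$. Then $\bm z-\x\in\bm A\x$ and $\bm z-\y\in\bm A\y$. Comonotonicity gives
\[
\scal{\x-\y}{\y-\x}\ge\rho\|\y-\x\|_{\bm S^{-1}}^2 ,
\]
that is, $0\ge\|\x-\y\|^2+\rho\|\x-\y\|^2_{\bm S^{-1}}$. If $\rho\ge0$ this forces $\x=\y$. If $\rho<0$, we use $\|\cdot\|^2_{\bm S^{-1}}\le\|\bm S^{-1}\|\|\cdot\|^2$ to get $0\ge(1+\rho\|\bm S^{-1}\|)\|\x-\y\|^2$, and $1+\rho\|\bm S^{-1}\|>0$ again forces $\x=\y$.

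For demiclosedness, take $\bm z_n\weakly\bm z$ in $\ran(\Id+\bm A)$ with $\bm T\bm z_n\to\bm w$. Set $\x_n=J_{\bm A}\bm z_n=\bm z_n-\bm T\bm z_n\weakly\bm z-\bm w$, with $\bm T\bm z_n\in\bm A\x_n$. The same computation applied to pairs $n,m$ gives
\[
0\le(1+\rho\|\bm S^{-1}\|)\|\x_n-\x_m\|^2\le\scal{\x_n-\x_m}{\bm z_n-\bm z_m}-\ldots
\]
This is the delicate step. A cleaner route is to show $\gra\bm A$ is sequentially weak-strong closed by testing comonotonicity against an arbitrary graph point and passing to the limit. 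Weak sequential closedness of $\ran(\Id+\bm A)$ gives $\bm z\in\ran(\Id+\bm A)$, so $\bm z=\bm p+\bm v$ with $\bm v\in\bm A\bm p$. We then compare $(\x_n,\bm T\bm z_n)$ with $(\bm p,\bm v)$:
\[
\scal{\x_n-\bm p}{\bm T\bm z_n-\bm v}\ge\rho\|\bm T\bm z_n-\bm v\|^2_{\bm S^{-1}} .
\]
Writing $\bm T\bm z_n-\bm v=(\bm z_n-\bm z)-(\x_n-\bm p)$ and passing to the limit (strong convergence of $\bm T\bm z_n$ makes the mixed terms converge) should give
\[
-\|\bm z-\bm w-\bm p\|^2\ge\rho\|\bm w-\bm v\|^2_{\bm S^{-1}}\quad\text{with}\quad \bm w-\bm v=\bm p-(\bm z-\bm w).
\]
Hence $(1+\rho\|\bm S^{-1}\|)\|\bm z-\bm w-\bm p\|^2\le0$, so $\bm p=\bm z-\bm w$, $\bm w=\bm v$, and $\bm T\bm z=\bm w$. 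The expected obstacle is the term $\scal{\x_n}{\x_n}$ that arises if one expands naively. The fix is to keep $\bm T\bm z_n$ (strongly convergent) paired with the weakly convergent $\x_n$, never $\x_n$ with itself, so that the limit passage is legitimate.

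For \ref{prop:A+Ccomo2}, $\bm A$ is $(0,\rho\bm S^{-1})$-semimonotone and $\bm C$ is $(0,\beta\bm S^{-1})$-semimonotone. Lemma~\ref{le:sumsemimon3} with $\nu_1=\nu_2=0$ and $\rho+\beta>0$ then yields the claim directly.

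For \ref{prop:A+Ccomo3}, if $\beta+\rho>0$ then by \ref{prop:A+Ccomo2} $\bm A+\bm C$ is $\rho'\bm S^{-1}$-comonotone with $\rho'=\beta\rho/(\beta+\rho)$. In the other sign cases we simply check that the hypothesis still yields comonotonicity of the sum with $1+\rho'\|\bm S^{-1}\|>0$. When $\beta+\rho>0$ this condition reads $\beta+\rho+\beta\rho\|\bm S^{-1}\|>0$, which is exactly the hypothesis. We then apply \ref{prop:A+Ccomo1} to $\bm A+\bm C$: $\bm T=\Id-J_{\bm A+\bm C}$ is demiclosed.

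Now take $\x_n\weakly\x$ with $\bm u_n\in(\bm A+\bm C)\x_n$ and $\bm u_n\to\bm u$. Set $\bm z_n=\x_n+\bm u_n\weakly\x+\bm u$, so that $\bm T\bm z_n=\bm u_n\to\bm u$. Demiclosedness gives $\bm T(\x+\bm u)=\bm u$, hence $J_{\bm A+\bm C}(\x+\bm u)=\x$, i.e.\ $\bm u\in(\bm A+\bm C)\x$.

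The case $\beta+\rho\le0$ requires checking that the hypothesis excludes it. With $\beta>0$, if $\beta+\rho\le0$ then $\rho<0$, so $-\beta\rho\|\bm S^{-1}\|>0\ge\beta+\rho$. The hypothesis therefore forces $\beta+\rho>0$, and no separate case is needed.
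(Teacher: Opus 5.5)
Your proof is correct. Items \ref{prop:A+Ccomo2} and \ref{prop:A+Ccomo3} follow the paper's argument. For \ref{prop:A+Ccomo2} you apply Lemma~\ref{le:sumsemimon3} to the sum. For \ref{prop:A+Ccomo3} you show that the hypothesis forces $\beta+\rho>0$ and is then equivalent to $1+\frac{\beta\rho}{\beta+\rho}\|\bm{S}^{-1}\|>0$. You then apply item \ref{prop:A+Ccomo1} to $\bm{A}+\bm{C}$ and use the translation $\bm{u}\in(\bm{A}+\bm{C})\x\Leftrightarrow(\Id-J_{\bm{A}+\bm{C}})(\x+\bm{u})=\bm{u}$.

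You differ in item \ref{prop:A+Ccomo1}, where the paper relies on citations. You prove single-valuedness of $J_{\bm{A}}$ directly, and, more substantially, you prove demiclosedness differently. For $\rho<0$, the paper passes to standard-metric $\rho\|\bm{S}^{-1}\|$-comonotonicity. It then imports from \cite{BauschkeMoursiXianfu2021} that $J_{\bm{A}}$ is $\alpha$-conically nonexpansive with $\alpha=1/(2(1+\rho\|\bm{S}^{-1}\|))$, writes $\bm{T}=\alpha(\Id-\bm{N})$ with $\bm{N}$ nonexpansive, and runs a Browder-type estimate. For $\rho\ge0$ it cites firm nonexpansiveness of resolvents and the classical demiclosedness principle.

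You instead apply the comonotonicity inequality directly to the graph pairs $(J_{\bm{A}}\bm{z}_n,\bm{T}\bm{z}_n)$ and $(J_{\bm{A}}\bm{z},\bm{T}\bm{z})$. The second pair exists precisely because $\ran(\Id+\bm{A})$ is weakly sequentially closed. You pair only weakly convergent factors with strongly convergent ones. In the limit this gives $-\|\bm{w}-\bm{T}\bm{z}\|^2\ge\rho\|\bm{w}-\bm{T}\bm{z}\|^2_{\bm{S}^{-1}}$, which forces $\bm{w}=\bm{T}\bm{z}$ by the same mechanism as your single-valuedness step. This Minty-type argument is self-contained, handles both signs of $\rho$ at once, and needs no conical-nonexpansiveness result. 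The paper's route additionally yields that $J_{\bm{A}}$ is conically nonexpansive, but the statement does not need that.

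When writing it up, make the following changes:
\begin{itemize}
\item Drop the abandoned display ending in ``$\ldots$''.
\item Drop the phrase ``testing against an arbitrary graph point''. You test against one specific point and prove demiclosedness of $\bm{T}$ directly.
\item Drop the vague sentence about ``other sign cases'' in \ref{prop:A+Ccomo3}, since your last paragraph shows they cannot occur.
\item Note that the factor $1+\rho\|\bm{S}^{-1}\|$ is the relevant bound only when $\rho<0$. For $\rho\ge0$, use $\rho\|\cdot\|^2_{\bm{S}^{-1}}\ge0$, as you did for single-valuedness.
\end{itemize}
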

	\begin{proof}
		\begin{enumerate}
			\item If $\rho \geq 0$, the result follows from \cite[Proposition~23.8 \& Theorem~4.27]{bauschkebook2017}. 
            If $\rho <0$, since $(\forall\bm{u}\in\bm{\H})$ $\rho\|\bm{u}\|_{\bm{S}^{-1}}^2 \geq \rho \|\bm{S}^{-1}\|\|\bm{u}\|^2$, $\bm{A}$ is $\rho\|\bm{S}^{-1}\|$-comonotone. Since $\rho \|\bm{S}^{-1}\| >-1$, $J_{\bm{A}}$ is single-valued on $\ran(\Id+\bm{A})$ and $\alpha$-conically nonexpansive for $\alpha = \frac{1}{2(\rho\|\bm{S}^{-1}\|+1)}$ \cite[Proposition~3.6]{BauschkeMoursiXianfu2021}. Let $\bm{N} \colon \ran(\Id+\bm{A}) \to \bm{\H}$ be a nonexpansive operator such that $J_{\bm{A}} = (1-\alpha)\Id + \alpha \bm{N}$, i.e., $\bm{T} = \alpha(\Id-\bm{N})$.      
            Let $(\bm{x}_n)_{n \in \N}$ be a sequence in $\ran(\Id+\bm{A})$ such that $\bm{x}_n \weakly \bm{x}$ and $\bm{T}\bm{x}_n \to \bm{u}$.
            Since $\ran(\Id+\bm A)$ is weakly sequentially closed and $\bm{x}_n\weakly\bm{x}$, we have $\bm{x}\in\ran(\Id+\bm{A})$.
			%hence $\alpha(\Id-\bm{N})\bm{x}_n \to \bm{u}$.
			Thus, it follows from the nonexpansiveness of $\bm{N}$ that
			\begin{align*}
				&\|\bm{T}\bm{x}-\bm{u}\|^2\\
				&= \|\alpha(\Id-\bm{N})\bm{x}-\bm{u}\|^2\\
				&= \|\alpha(\bm{x}_n-\bm{N}\bm{x})-\bm{u}\|^2-2\alpha\scal{\bm{x}_n-\bm{x}}{\alpha(\bm{x}-\bm{N}\bm{x})-\bm{u}}-\alpha^2\|\bm{x}_n-\bm{x}\|^2\\
				&= \|\bm{T}\bm{x}_n-\bm{u}\|^2+\alpha^2\|\bm{N}\bm{x}_n-\bm{N}\bm{x}\|^2+2\alpha\scal{\bm{N}\bm{x}_n-\bm{N}\bm{x}}{\bm{T}\bm{x}_n-\bm{u}}\\
				& \hspace*{2cm}
				-2\alpha\scal{\bm{x}_n-\bm{x}}{\bm{T}\bm{x}-\bm{u}}-\alpha^2\|\bm{x}_n-\bm{x}\|^2\\
				&\le \|\bm{T}\bm{x}_n-\bm{u}\|^2+2\alpha\scal{\bm{N}\bm{x}_n-\bm{N}\bm{x}}{\bm{T}\bm{x}_n-\bm{u}}
				-2\alpha\scal{\bm{x}_n-\bm{x}}{\bm{T}\bm{x}-\bm{u}}\\    
				&\le \|\bm{T}\bm{x}_n-\bm{u}\|^2+2\alpha\|\bm{N}\bm{x}_n-\bm{N}\bm{x}\| \|\bm{T}\bm{x}_n-\bm{u}\|
				-2\alpha\scal{\bm{x}_n-\bm{x}}{\bm{T}\bm{x}-\bm{u}}\\
				&\le \|\bm{T}\bm{x}_n-\bm{u}\|^2+2\alpha\|\bm{x}_n-\bm{x}\| \|\bm{T}\bm{x}_n-\bm{u}\|				-2\alpha\scal{\bm{x}_n-\bm{x}}{\bm{T}\bm{x}-\bm{u}}.
			\end{align*}
			By taking $n \to +\infty$ and by using the fact
			that $(\x_n)_{n\in \mathbb{N}}$ is bounded,       
			we conclude that $\bm{T}\x=\bm{u}$.
			\item Suppose that $\beta +{\rho}> 0$.	 
            Since $\bm{A}$ is $(0,\rho \bm{S}^{-1})$-semimonotone and 
            $\bm{C}$ is $(0,\beta  \bm{S}^{-1})$-semimonotone, the result
            follows directly from 
            Lemma~\ref{le:sumsemimon3}.
			\item We first note that $\beta+\rho>0$. Indeed, this is immediate
when $\rho\geq0$, whereas, when $\rho<0$, it follows from
$\beta + {\rho} > -\beta{\rho}\|\bm{S}^{-1}\|$.
            Let $\overline{\rho}=\beta\rho/(\beta+\rho)$.
            By \ref{prop:A+Ccomo2}, 
            $\bm{A}+\bm{C}$ is 
            $\overline{\rho}\bm S^{-1}$-comonotone.
           The assumption
           $\beta + {\rho} > -\beta{\rho}\|\bm{S}^{-1}\|$ being equivalent to
           $\overline{\rho}\|\bm{S}^{-1}\|> -1$, it follows from \ref{prop:A+Ccomo1} that $\Id-J_{\bm{A}+\bm{C}}$ is demiclosed. Let $(\bm{x}_n,\bm{u}_n)_{n\in \mathbb{N}}$
			be a sequence in $\gra(\bm{A}+\bm{C})$ such that $\bm{x}_n\weak \bm{x}$
			and $\bm{u}_n\to \bm{u}$.
			Note that
			\begin{equation}
				\bm{u}_n \in (\bm{A}+\bm{C})\bm{x}_n \Leftrightarrow (\Id-J_{\bm{A}+\bm{C}})(\bm{x}_n+\bm{u}_n) = \bm{u}_n.
			\end{equation}
			Since $\bm{x}_n+\bm{u}_n \weak \bm{x}+\bm{u}$, we conclude that $(\Id-J_{\bm{A}+\bm{C}})(\bm{x}+\bm{u})=\bm{u}$ and $\bm{u} \in (\bm{A}+\bm{C})\bm{x}$.
		\end{enumerate}
	\end{proof}
	\begin{rem}\label{rem:sumsemimon} \ 
		\begin{enumerate}
			\item     Suppose that, in the standard metric $\bm{S}=\Id$, $\bm{A}$ is $\rho$-comonotone for $\rho>-\beta/(1+\beta)$.
			It follows from \cite[Theorem~2.17]{BauschkeMoursiXianfu2021} that $\bm{A}$ is maximally $\rho$-comonotone if and only if $\ran(\Id+\bm{A}) = \bm{\H}$. Therefore, if $\bm{A}+\bm{C}$ is maximally $\overline{\rho}$-comonotone for $\overline{\rho}\in ]-1,\frac{\beta \rho}{\beta+\rho}]$,     $\ran(\Id+\bm{A}+\bm{C})$ is weakly sequentially closed and $\gra(\bm{A}+\bm{C})$ is sequentially weak-strong closed.
			\item Additional conditions ensuring the single-valuedness of $(\bm{M}+\bm{A})^{-1}$ and the sequential weak-strong closedness of $\gra \bm{A}$ for comonotone operators can be found in \cite[Lemma~1 \& Proposition~3]{PapadimitriouVu2023}.
		\end{enumerate}
	\end{rem}
	
	We now focus our attention on the following problem.
	\begin{pro}\label{pro:1}
		Suppose that Assumption~\ref{assum:2} holds.
		%		Let $\bm{\H}$ be a real Hilbert space, let $\bm{A}:\bm{\H}\to2^{\bm{\H}}$ be a set-valued operator, and let $\bm{C}:\bm{\H} \to \bm{\H}$ be a $\beta$-cocoercive operator, for some $\beta\in \RPP$. The problem is to
		We want to
		\begin{equation}
			\textnormal{find  } \x \in \bm{\H} \textnormal{ such that } \bm{0} \in (\bm{A}+\bm{C})\x,
		\end{equation}
		under the assumption that its solution set, denoted by $\widetilde{\bm{Z}}$, is nonempty.
	\end{pro}
	The next proposition provides a sufficient condition for the solution to Problem~\ref{pro:1} to be unique.
	\begin{prop}\label{prop:uniquesol}
		In the context of Problem~\ref{pro:1}, suppose additionally that $\bm{A}$ is $(\mu\bm{S},\rho\bm{S}^{-1})$-semimonotone on $\pmb{\mathbb{S}}$ for some $\mu \in \RPP$. Then, the solution set of Problem~\ref{pro:1} is a singleton.
	\end{prop}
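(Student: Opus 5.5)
Take two solutions $\x_1,\x_2\in\widetilde{\bm Z}$; the plan is to show $\x_1=\x_2$ by combining the semimonotonicity of $\bm A$ on $\pmb{\mathbb S}$ with the cocoercivity of $\bm C$.

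Since $\bm 0\in(\bm A+\bm C)\x_i$, we have $-\bm C\x_i\in\bm A\x_i$, so $(\x_i,-\bm C\x_i)\in\gra\bm A$. Also $(\x_j,-\bm C\x_j)\in\pmb{\mathbb S}$ by definition of $\pmb{\mathbb S}$. Apply the $(\mu\bm S,\rho\bm S^{-1})$-semimonotonicity on $\pmb{\mathbb S}$ with $(x,u)=(\x_1,-\bm C\x_1)\in\gra\bm A$ and $(y,v)=(\x_2,-\bm C\x_2)\in\pmb{\mathbb S}$. Writing $\bm d=\x_1-\x_2$ and $\bm w=\bm C\x_1-\bm C\x_2$, this gives
\begin{equation*}
-\scal{\bm d}{\bm w}\geq \mu\|\bm d\|^2_{\bm S}+\rho\|\bm w\|^2_{\bm S^{-1}}.
\end{equation*}
The $\beta$-cocoercivity of $\bm C$ with respect to $\bm S$ gives $\scal{\bm d}{\bm w}\geq\beta\|\bm w\|_{\bm S^{-1}}^2$. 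Adding the two inequalities yields
\begin{equation*}
0\geq\mu\|\bm d\|_{\bm S}^2+(\rho+\beta)\|\bm w\|^2_{\bm S^{-1}}.
\end{equation*}
Assumption~\ref{assum:2}\ref{assum:23} gives $\rho>-\beta$, so the second term is nonnegative. Since $\mu>0$, it follows that $\|\bm d\|_{\bm S}=0$.

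Strong monotonicity of $\bm S$ then gives $\bm d=\bm 0$, that is, $\x_1=\x_2$. Together with the standing assumption that $\widetilde{\bm Z}\neq\varnothing$, this shows that $\widetilde{\bm Z}$ is a singleton.

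There is no real obstacle here. The one point requiring care is to put the pair that lies in $\gra\bm A$ in the role of $(x,u)$, and the pair from $\pmb{\mathbb S}$ in the role of $(y,v)$, as the definition \eqref{eq:defrhomon} demands. Both pairs qualify, so this is harmless.
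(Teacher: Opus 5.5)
Your proof is correct and follows the paper's argument essentially verbatim. Like the paper, you apply the semimonotonicity of $\bm{A}$ on $\pmb{\mathbb{S}}$ to two solutions, add the $\beta$-cocoercivity inequality for $\bm{C}$, and conclude from $\mu>0$ and $\rho+\beta>0$ that the two solutions coincide.
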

	\begin{proof}
		Let $(\x,\y) \in \widetilde{\bm{Z}}^2$. We have that $-\bm{C}\x \in \bm{A}\x$ and $-\bm{C}\y \in \bm{A}\y$. Then, by the $(\mu\bm{S},\rho\bm{S}^{-1})$-semimonotonicity of $\bm{A}$ on $\pmb{\mathbb{S}}$, we have 
		\begin{align}\label{eq:uniquesol1}
			-\scal{\x-\y}{\bm{C}\x-\bm{C}\y}\geq \mu\|\x-\y\|^2_{\bm{S}}+\rho\|\bm{C}\x-\bm{C}\y\|^2_{\bm{S}^{-1}}.
		\end{align}
		Moreover, since $\bm{C}$ is $\beta$-cocoercive with respect to $\bm{S}$,  we have that
		\begin{align}\label{eq:uniquesol2}
			\scal{\x-\y}{\bm{C}\x-\bm{C}\y}\geq \beta\|\bm{C}\x-\bm{C}\y\|^2_{\bm{S}^{-1}}.
		\end{align}
		Summing up \eqref{eq:uniquesol1} and \eqref{eq:uniquesol2}, we obtain
		\begin{align}
			0\geq \mu\|\x-\y\|^2_{\bm{S}}+(\rho+\beta)\|\bm{C}\x-\bm{C}\y\|^2_{\bm{S}^{-1}}.
		\end{align}
		Since $\rho+\beta>0$,  we deduce that $\x=\y$.
	\end{proof}
	Now, we introduce the algorithm studied in this section for solving Problem~\ref{pro:1}. This algorithm was initially proposed in \cite{MorinBanertGiselsson2022} for solving monotone problems. We investigate its convergence in the absence of monotonicity by using Assumption~\ref{assum:2}\ref{assum:21} to define 
	a warped resolvent of $\bm{A}$.
	\begin{algo}\label{algo:NLFBMM} In the context of Problem~\ref{pro:1} and Assumption~\ref{assum:2}, let $(\x_0,\bu_0)\in \bm{\H}^2$, let $\theta \in\,]0,2[$, and consider the following recurrence:
\begin{subnumcases}{\label{eq:NLFBMM}(\forall n\in\N)\quad}
\begin{aligned}[t]
\p_{n+1}
&=(\bm{M}+\bm{A})^{-1}(\bm{M}\x_n-\bm{C}\x_n+\bu_n/\tau)
\end{aligned}
& \label{eq:NLFBMM-a}\\
\begin{aligned}[t]
\bu_{n+1}
&=(\tau\bm{M}-\bm{S})\p_{n+1}
  -(\tau\bm{M}-\bm{S})\x_n
\end{aligned}
& \label{eq:NLFBMM-b}\\
\begin{aligned}[t]
\x_{n+1}
&=(1-\theta)\x_n+\theta\p_{n+1}
\end{aligned}
& \label{eq:NLFBMM-c}
\end{subnumcases}
%        \begin{equation}
%        \label{eq:NLFBMM}
%			(\forall n\in\N)\quad 
%			\begin{cases}
%				\begin{array}{l}
%					\textcolor{blue}{\p_{n+1}}= (\bm{M} + \bm{A})^{-1} (\bm{M}\x_n-\bm{C}\x_n+\bu_n/\tau)\\
%					\bu_{n+1} = (\tau \bm{M} - \bm{S})\textcolor{blue}{\p_{n+1}} -(\tau \bm{M} -\bm{S})\x_n.\\
%                    \textcolor{blue}{\x_{n+1} = (1-\theta)\x_n + \theta \p_{n+1}}
%				\end{array}
%			\end{cases}
%            \end{equations}
	\end{algo}	
	The next proposition provides a key estimate for deriving the convergence of Algorithm~\ref{algo:NLFBMM}.
	\begin{prop}
		\label{prop:NLFBMMnew} In the context of Problem~\ref{pro:1}, let $\zeta_{\bm{M}}\in \RPP$ be the Lipschitz modulus of $\bm{M}$ with respect to $\bm{S}$, 
		let $(\x_0,\bu_0)\in \bm{\H}^2$, let $(\p_{n},\bu_{n},\x_{n})_{n \geq 1}$ be the sequence generated by Algorithm~\ref{algo:NLFBMM}. Let $\x \in \zer(\bm{A}+\bm{C})$, and define
		\begin{align}
			%\bm{\xi}_{n} &= \bm{S}^{-1}\big(\tau\bm{M}\textcolor{blue}{\p_{n}}-(\tau\bm{M}-\bm{S})\x_{n-1}\big),\label{e:defbxi}\\
            	&\eta = \begin{cases}
		1, &\textnormal{ if } \theta \in [1,2[ \textnormal{ and } (\bm{S}-\tau \bm{M}) \textnormal{ is monotone,}\\
		1+|1-\theta|, &\textnormal{ otherwise,}
		\end{cases}\label{def:nu}\\
            \lambda&=2-\theta-2\eta\zeta-\frac{(\tau+2\hat{\rho})^2}{2\tau\big(\beta+\hat{\rho}(1+\tau^{-1}\zeta)\big)} +2\hat{\rho}\tau
            \left( \left(\tau^{-1}\zeta+\zeta_{\bm{M}}\right)^2+4\tau^ {-1}\zeta\right),     %(\tau\zeta_{\bm{M}}\left({\color{blue} \zeta_{\bm{M}}+\frac{\zeta}{\tau}}\right)+2\zeta)+\lambda_1,
            \label{e:defmutn}  \\  
            \lambda_1 & =\zeta\big({2\eta-1}-2\hat{\rho}(\tau^{-1}\zeta+\zeta_{\bm{M}}+2)\big), \label{e:deflambda_1}  \\
			(\forall n\in \N)\quad a_{n+1} &= \|\x_{n+1}-\x\|^2_{\bm{S}}+2\theta\scal{\bu_{n+1}}{\p_{n+1}-\x} +\theta\lambda_1\|\p_{n+1}-\x_{n}\|^2_{\bm{S}}.\label{e:defan}
		\end{align}
		Then, the following assertions hold:
        \begin{enumerate}
\item\label{prop:NLFBMMnewi} For every $n \in \N\setminus\{0\}$,
\begin{align}\label{eq:quasifej}
			 a_{n+1}&\leq a_n-\theta\lambda\|\p_{n+1}-\x_n\|^2_{\bm{S}}.
		\end{align}
        \item\label{prop:NLFBMMnewii} $(a_n)_{n\geq 1}$ is a nonnegative sequence such that 
        \begin{equation}\label{eq:anboundbelow}
          (\forall n \in \N\setminus\{0\})  \qquad a_n \geq \left(1-\zeta \theta\right) \|\x_{n}-\x\|^2_{\bm{S}}.
        \end{equation}
        \end{enumerate}
	\end{prop}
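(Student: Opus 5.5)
Write $\bm{P}:=\tau\bm{M}-\bm{S}$, which is $\zeta$-Lipschitzian with respect to $\bm{S}$ by Assumption~\ref{assum:2}\ref{assum:24}. The plan is to turn the resolvent step \eqref{eq:NLFBMM-a} into an inclusion in $\gra\bm{A}$, test it against the solution $\x$ with the comonotonicity of Assumption~\ref{assum:2}\ref{assum:23}, and add the cocoercivity of $\bm{C}$. This yields a one-step inequality of Fej\'er type with error terms, which are then absorbed.

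\textbf{Step 1 (the basic inclusion).} From \eqref{eq:NLFBMM-a},
\[
\bm{M}\x_n-\bm{C}\x_n+\bu_n/\tau-\bm{M}\p_{n+1}\in\bm{A}\p_{n+1}.
\]
Multiply by $\tau$ and use \eqref{eq:NLFBMM-b}, which gives $\tau\bm{M}\x_n-\tau\bm{M}\p_{n+1}=\bm{S}(\x_n-\p_{n+1})-\bu_{n+1}$. Setting $\bm{v}_{n+1}:=\bm{S}(\x_n-\p_{n+1})+\bu_n-\bu_{n+1}-\tau\bm{C}\x_n$, this becomes $\bm{v}_{n+1}\in\tau\bm{A}\p_{n+1}$. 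Also $-\tau\bm{C}\x\in\tau\bm{A}\x$.

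\textbf{Step 2 (comonotonicity plus cocoercivity).} Apply Assumption~\ref{assum:2}\ref{assum:23} to the pairs $(\p_{n+1},\bm{v}_{n+1}/\tau)$ and $(\x,-\bm{C}\x)$. This gives
\[
\scal{\p_{n+1}-\x}{\bm{v}_{n+1}+\tau\bm{C}\x}\ge \tfrac{\rho}{\tau}\|\bm{v}_{n+1}+\tau\bm{C}\x\|^2_{\bm{S}^{-1}}\ge\tfrac{\hat\rho}{\tau}\|\cdot\|^2_{\bm{S}^{-1}}.
\]
Expand the left-hand side.
\begin{itemize}
\item The term $\scal{\p_{n+1}-\x}{\bm{S}(\x_n-\p_{n+1})}$ becomes, through \eqref{eq:NLFBMM-c} and the identity $\|\x_{n+1}-\x\|_{\bm{S}}^2=\|\x_n-\x\|_{\bm{S}}^2+2\theta\scal{\p_{n+1}-\x_n}{\x_n-\x}_{\bm{S}}+\theta^2\|\p_{n+1}-\x_n\|_{\bm{S}}^2$, the telescoping difference $\|\x_n-\x\|_{\bm{S}}^2-\|\x_{n+1}-\x\|_{\bm{S}}^2$ (divided by $2\theta$) plus a term $-(2-\theta)\|\p_{n+1}-\x_n\|_{\bm{S}}^2/2$.
\item The term $\scal{\p_{n+1}-\x}{\bu_n-\bu_{n+1}}$ is rewritten as $\scal{\p_n-\x}{\bu_n}-\scal{\p_{n+1}-\x}{\bu_{n+1}}+\scal{\p_{n+1}-\p_n}{\bu_n}$. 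The first two pieces telescope and produce the $2\theta\scal{\bu}{\p-\x}$ part of $a_n$.
\item The cross term $\scal{\p_{n+1}-\p_n}{\bu_n}$ is bounded using $\|\bu_n\|_{\bm{S}^{-1}}\le\zeta\|\p_n-\x_{n-1}\|_{\bm{S}}$, where $\p_{n+1}-\p_n$ is expressed via $\x_n$ and \eqref{eq:NLFBMM-c}. This produces the $\lambda_1$ memory term and the factor $\eta$. The refinement $\eta=1$ when $\theta\ge1$ and $-\bm{P}$ is monotone uses the sign of $\scal{\x_n-\x_{n-1}}{\bm{P}(\ldots)}$.
\item The $\bm{C}$ term $\tau\scal{\p_{n+1}-\x}{\bm{C}\x-\bm{C}\x_n}$ is split as $\p_{n+1}-\x_n$ plus $\x_n-\x$. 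Cocoercivity and Young's inequality give $-\frac{\tau^2}{4\beta'}\|\p_{n+1}-\x_n\|^2$-type bounds.
\end{itemize}

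\textbf{Step 3 (negative $\hat\rho$).} When $\hat\rho<0$, the right-hand side $\frac{\hat\rho}{\tau}\|\bm{S}(\x_n-\p_{n+1})+\bu_n-\bu_{n+1}+\tau(\bm{C}\x-\bm{C}\x_n)\|^2_{\bm{S}^{-1}}$ must be bounded from below. Expand it and split the $\bm{C}$ part. Lemma~\ref{lem:des} lets us trade $\beta\|\bm{C}\x_n-\bm{C}\x\|^2$ against $\hat\rho\|\cdot\|^2$ with effective constant $\beta+\hat\rho(1+\tau^{-1}\zeta)$. Bound the $\bu$ differences by $\zeta$ times $\|\p_{n+1}-\x_n\|_{\bm{S}}$ and $\|\p_n-\x_{n-1}\|_{\bm{S}}$, and use $\zeta_{\bm{M}}$ where $\bm{P}$ acts on $\p_{n+1}-\p_n$. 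Collecting the coefficients of $\|\p_{n+1}-\x_n\|^2_{\bm{S}}$ should reproduce $\lambda$ in \eqref{e:defmutn}. Those of $\|\p_n-\x_{n-1}\|_{\bm{S}}^2$ should match $\lambda_1$ in \eqref{e:deflambda_1}, so that the term is absorbed into $a_n$, giving \eqref{eq:quasifej}.

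\textbf{Main obstacle.} The hardest part is this bookkeeping: choosing the Young weights so the constants come out exactly as $\lambda$ and $\lambda_1$. I would first fix the weights to match $\frac{(\tau+2\hat\rho)^2}{2\tau(\beta+\hat\rho(1+\tau^{-1}\zeta))}$ by completing the square in the $\bm{C}$ term.

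\textbf{Part \ref{prop:NLFBMMnewii}.} Start from $2\theta\scal{\bu_n}{\p_n-\x}$ and write $\p_n-\x=\theta^{-1}(\x_n-\x)-\theta^{-1}(1-\theta)(\x_{n-1}-\x)$; equivalently, split $\p_n-\x=(\p_n-\x_{n-1})+(\x_{n-1}-\x)$. Combining $|\scal{\bu_n}{\cdot}|\le\zeta\|\p_n-\x_{n-1}\|_{\bm{S}}\|\cdot\|_{\bm{S}}$ with Young's inequality gives a lower bound $-\zeta\theta\|\x_n-\x\|^2_{\bm{S}}$ minus a multiple of $\|\p_n-\x_{n-1}\|^2_{\bm{S}}$. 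That multiple is dominated by $\theta\lambda_1\|\p_n-\x_{n-1}\|^2_{\bm{S}}$, since $\lambda_1\ge\zeta(2\eta-1)\ge\zeta$ when $\hat\rho\le0$. This yields \eqref{eq:anboundbelow}. Nonnegativity follows because $\zeta\theta<1$, as $\zeta<1/2$ and $\theta<2$.
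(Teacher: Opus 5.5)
Your outline follows the paper's proof. You use the same inclusion $\bm{S}\x_n+\bu_n-(\bm{S}\p_{n+1}+\bu_{n+1})-\tau\bm{C}\x_n\in\tau\bm{A}\p_{n+1}$, test comonotonicity against $(\x,-\bm{C}\x)$, telescope $\scal{\bu_n}{\p_n-\x}$, treat $(1-\theta)\scal{\bu_n}{\x_{n-1}-\p_n}$ through $\eta$, and finish with one optimized Young step on the $\bm{C}$ terms. The gap is that the proposition \emph{is} the claim that these estimates close with exactly $\lambda$ and $\lambda_1$. You leave that as ``should reproduce'', and several mechanisms you name in Step~3 do not produce those constants.

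\textbf{Where the constants in Step~3 come from.}
The $\zeta_{\bm{M}}$ terms do not arise from $\bm{P}$ acting on $\p_{n+1}-\p_n$. They come from the square $\|\bm{S}(\x_n-\p_{n+1})+\bu_n-\bu_{n+1}\|^2_{\bm{S}^{-1}}$. By your own Step~1 identity this equals $\|\tau(\bm{M}\x_n-\bm{M}\p_{n+1})+\bu_n\|^2_{\bm{S}^{-1}}$, which is at most $(\tau\zeta_{\bm{M}}+\zeta)\big(\tau\zeta_{\bm{M}}\|\p_{n+1}-\x_n\|^2_{\bm{S}}+\zeta\|\p_n-\x_{n-1}\|^2_{\bm{S}}\big)$. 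If you bound $\bm{S}(\x_n-\p_{n+1})$ and $\bu_{n+1}$ separately, $\tau\zeta_{\bm{M}}$ is replaced by $1+\zeta$, and both $\lambda$ and $\lambda_1$ change.

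The $\zeta$ in $\beta\tau+\hat\rho(\tau+\zeta)$ does not come from Lemma~\ref{lem:des}. It comes from Young with weight $\zeta$ on $\scal{\bu_n-\bu_{n+1}}{\bm{C}\x_n-\bm{C}\x}_{\bm{S}^{-1}}$, together with $\|\bu_n-\bu_{n+1}\|^2_{\bm{S}^{-1}}\le 2\|\bu_n\|^2_{\bm{S}^{-1}}+2\|\bu_{n+1}\|^2_{\bm{S}^{-1}}$.

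The numerator $(\tau+2\hat\rho)^2$ appears only if you keep the exact cross term $4\hat\rho\scal{\x_n-\p_{n+1}}{\bm{C}\x_n-\bm{C}\x}$ from expanding the $\hat\rho$-square. You must merge it with your Step~2 term into $-2(\tau+2\hat\rho)\scal{\bm{C}\x_n-\bm{C}\x}{\p_{n+1}-\x_n}-2\tau\scal{\bm{C}\x_n-\bm{C}\x}{\x_n-\x}$ \emph{before} applying cocoercivity and Young, with $\varepsilon=|\tau+2\hat\rho|/(2\beta\tau+2\hat\rho(\tau+\zeta))$. Applying the $\tau^2/(4\beta')$-type bound of Step~2 on its own gives a different constant.

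\textbf{The lower bound \eqref{eq:anboundbelow}.}
Neither decomposition you propose works. In general both leave a term in $\x_{n-1}-\x$, which $a_n$ does not control. Use instead $\p_n-\x=(1-\theta)(\p_n-\x_{n-1})+(\x_n-\x)$, which follows from \eqref{eq:NLFBMM-c}. Then:
\begin{itemize}
\item $2\theta\scal{\bu_n}{\x_n-\x}\ge-\theta\zeta\|\x_n-\x\|^2_{\bm{S}}-\theta\zeta\|\p_n-\x_{n-1}\|^2_{\bm{S}}$.
\item $2\theta(1-\theta)\scal{\bu_n}{\p_n-\x_{n-1}}\ge-2\theta(\eta-1)\zeta\|\p_n-\x_{n-1}\|^2_{\bm{S}}$. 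This needs the same case distinction as in the descent inequality: when $\eta=1$, the term is nonnegative because $\theta\ge1$ and $\bm{S}-\tau\bm{M}$ is monotone.
\end{itemize}
The total multiple is $\theta(2\eta-1)\zeta$, not $\theta\zeta$. It is absorbed because $\lambda_1-(2\eta-1)\zeta=-2\hat\rho\zeta(\tau^{-1}\zeta+\zeta_{\bm{M}}+2)\ge0$. With these corrections, your argument becomes the paper's proof.
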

    \begin{proof}
    First note that
\begin{equation}
\tau>\underline\tau_\zeta=
-\frac{\zeta\hat{\rho}}{\beta+\hat{\rho}}
\quad
\Leftrightarrow
\quad 
\beta\tau+\hat{\rho}(\tau+\zeta)>0,
\end{equation}
which ensures that $\lambda$ is well defined.
In addition, if $\zeta=0$, then, for every $n\in\N$,
    $\bu_{n+1}
=(\tau\bm{M}-\bm{S})\p_{n+1}
  -(\tau\bm{M}-\bm{S})\x_n=\bm{0}$
  and $\lambda_1 = 0$. The decay property \eqref{eq:quasifej} then follows directly from the arguments below after removing all terms involving $\bu_n$. 
  Moreover, for every $n\in\N\setminus\{0\}$,
  $a_n=\|\x_{n}-\x\|_{\bm{S}}^2$, so \eqref{eq:anboundbelow} holds immediately.
  In the remainder of the proof, we therefore assume that $\zeta>0$.    \begin{enumerate}
        \item Fix $n\in \N\setminus\{0\}$.
		It follows from \eqref{eq:NLFBMM-a} that 
	\begin{equation}
		\tau \bm{M}\x_n-\tau \bm{C}\x_n+\bu_n-\tau \bm{M} \p_{n+1}\in \tau \bm{A} \p_{n+1},
    \end{equation}    
    which, by using \eqref{eq:NLFBMM-b}, is equivalent to
    \begin{equation}
		\bm{S}\x_n + \bu_n  - (\bm{S}\p_{n+1}+\bu_{n+1})-\tau \bm{C}\x_n \in \tau \bm{A} \p_{n+1}.
	\end{equation}
		Since $\x \in \zer(\bm{A}+\bm{C})$, $-\bm{C}\x \in \bm{A}\x$. By the $\rho\bm{S}^{-1}$-comonotonicity of $\bm{A}$ on $\pmb{\mathbb{S}}$ (Assumption~\ref{assum:2}\ref{assum:23}), we deduce that 
		\begin{align}\label{eq:proof0pre}
				0\leq	2&\scal{\bm{S}\x_n + \bu_n  - (\bm{S}\p_{n+1}+\bu_{n+1})-\tau \bm{C}\x_n+\tau \bm{C}\x}{\p_{n+1}-\x}\nonumber\\
	& -\frac{2\rho}{\tau}\|\bm{S}\x_n + \bu_n  - (\bm{S}\p_{n+1}+\bu_{n+1})-\tau \bm{C}\x_n+\tau \bm{C}\x\|^2_{\bm{S}^{-1}},
		\end{align}
that is 
\begin{align}\label{eq:proof0}
				0\leq	2&
\scal{\x_n - \p_{n+1}}{\p_{n+1}-\x}_{\bm{S}}   +2             
                \scal{\bu_n  -\bu_{n+1}-\tau \bm{C}\x_n+\tau \bm{C}\x}{\p_{n+1}-\x}\nonumber\\
	& -\frac{2\rho}{\tau}\|\bm{S}\x_n + \bu_n  - (\bm{S}\p_{n+1}+\bu_{n+1})-\tau \bm{C}\x_n+\tau \bm{C}\x\|^2_{\bm{S}^{-1}}.
		\end{align}
Note that
	\begin{align}\label{eq:proof1}
		2\scal{\x_n - \p_{n+1}}{\p_{n+1}-\x}_{\bm{S}}
		=& \|\x_n-\x\|_{\bm{S}}^2-\|\x_n-\p_{n+1}\|_{\bm{S}}^2-\|\p_{n+1}-\x\|_{\bm{S}}^2.
	\end{align}
		Moreover, it follows from \eqref{eq:NLFBMM-c} that
	\begin{align}\label{eq:proof2}
	&2\scal{\bu_n -\bu_{n+1}}{\p_{n+1}-\x}\nonumber\\
    %&=
	%	-2\scal{\bu_{n+1}}{\p_{n+1}-\x} + 2\scal{\bu_n }{\p_{n}-\x}+2\scal{\bu_n}{\p_{n+1}-\p_{n}}\nonumber\\
		&=
		-2\scal{\bu_{n+1}}{\p_{n+1}-\x} + 2\scal{\bu_n }{\p_{n}-\x}+2\scal{\bu_n}{\p_{n+1}-\x_{n}}+2\scal{\bu_n}{\x_n-\p_{n}}\nonumber\\
		&=
		-2\scal{\bu_{n+1}}{\p_{n+1}-\x} + 2\scal{\bu_n }{\p_{n}-\x}+2\scal{\bu_n}{\p_{n+1}-\x_{n}}+2 (1-\theta)\scal{\bu_n}{\x_{n-1}-\p_{n}}.
	\end{align}
	From the $\zeta$-Lipschitz continuity of $\tau \bm{M}-\bm{S}$ with respect to $\bm{S}$, we deduce that
	\begin{align}\label{eq:proof3}
		\|\bu_{n}\|_{\bm{S}^{-1}} &= \|(\tau \bm{M} - \bm{S})\p_{n} -(\tau \bm{M} -\bm{S})\x_{n-1}\|_{\bm{S}^{-1}}\nonumber\\
		&\leq \zeta\|\p_{n}-\x_{n-1}\|_{\bm{S}}.
	\end{align}
	Hence, the third term on the right-hand side of equality \eqref{eq:proof2} can be upper bounded as
	\begin{align}\label{eq:proof4}
		2\scal{\bu_n}{\p_{n+1}-\x_n}  &\leq \frac{1}{\zeta}\|\bu_n\|^2_{\bm{S}{^{-1}}} +  \zeta\|\p_{n+1}-\x_n\|_{\bm{S}}^2\nonumber\\
		&\leq \zeta\|\p_{n}-\x_{n-1}\|^2_{\bm{S}} +  \zeta\|\p_{n+1}-\x_n\|^2_{\bm{S}}.
	\end{align}
For the last term on the right-hand side of \eqref{eq:proof2}, it follows from
\eqref{def:nu} and \eqref{eq:proof3} that	\begin{align}\label{eq:proof5}
		2 (1-\theta)\scal{\bu_n}{\x_{n-1}-\p_{n}}
		&=2 (\theta-1)\scal{(\tau \bm{M} - \bm{S})\p_{n} -(\tau \bm{M} -\bm{S})\x_{n-1}}{\p_n-\x_{n-1}}\nonumber\\
		&\leq 2(\eta-1)\zeta\|\p_{n}-\x_{n-1}\|^2_{\bm{S}}.	\end{align}
Since $\hat{\rho}\leq \rho$, combining 	\eqref{eq:proof0}-\eqref{eq:proof5}, we obtain
\begin{align}\label{eq:proof6}
		&\|\p_{n+1}-\x\|^2_{\bm{S}}+2\scal{\bu_{n+1}}{\p_{n+1}-\x} \nonumber\\
		&\leq  \|\x_{n}-\x\|^2_{\bm{S}}+2\scal{\bu_{n}}{\p_{n}-\x}+(2\eta-1)\zeta\|\p_{n}-\x_{n-1}\|^2_{\bm{S}} -(1-\zeta)\|\p_{n+1}-\x_n\|^2_{\bm{S}} \nonumber\\
		&-2\tau\scal{\p_{n+1}-\x}{\bm{C}\x_n-\bm{C}\x} -\frac{2\hat{\rho}}{\tau}\|\bm{S}\x_n + \bu_n  - (\bm{S}\p_{n+1}+\bu_{n+1})-\tau \bm{C}\x_n+\tau \bm{C}\x\|^2_{\bm{S}^{-1}}.
\end{align}	
Let us bound the last term in \eqref{eq:proof6}. First note that
\begin{align}\label{eq:proof7}
	\|\bm{S}\x_n + \bu_n  - (&\bm{S}\p_{n+1}+\bu_{n+1})-\tau \bm{C}\x_n+\tau \bm{C}\x\|^2_{\bm{S}^{-1}}\nonumber\\
	&=\|\bm{S}(\x_n-\p_{n+1}) + \bu_n  - \bu_{n+1}\|^2_{\bm{S}^{-1}} -2\tau \scal{\x_n-\p_{n+1}}{\bm{C}\x_n-\bm{C}\x} \nonumber\\
	&\quad-2\tau	\scal{\bu_n  - \bu_{n+1}}{\bm{C}\x_n-\bm{C}\x}_{\bm{S}^{-1}}+\tau^2\| \bm{C}\x_n-\bm{C}\x\|^2_{\bm{S}^{-1}}.
\end{align}
Equation~\eqref{eq:NLFBMM-b}
yields
\begin{align}\label{eq:proof8}
\|\bm{S}(\x_n-\p_{n+1}) &+ \bu_n  - \bu_{n+1}\|^2_{\bm{S}^{-1}} \nonumber\\
&=\|( \tau\bm{M}\p_{n+1}-\tau\bm{M}\x_n)-(( \tau\bm{M}-\bm{S})\p_{n}-(\tau\bm{M}-\bm{S})\x_{n-1})\|^2_{\bm{S}^{-1}}\nonumber\\
&\leq(\tau\zeta_{\bm{M}}\|\p_{n+1}-\x_n\|_{\bm{S}}+ \zeta\|\p_{n}-\x_{n-1}\|_{\bm{S}})^2\nonumber\\
&\leq \tau^2\zeta_{\bm{M}}^2 \left(1+\frac{\zeta}{\zeta_{\bm{M}}\tau}\right)\|\p_{n+1}-\x_n\|^2_{\bm{S}}+ \zeta^2\left(1+\frac{\zeta_{\bm{M}}\tau}{\zeta}\right)\|\p_{n}-\x_{n-1}\|_{\bm{S}}^2 \nonumber\\
&= \tau\zeta_{\bm{M}} \left(\tau\zeta_{\bm{M}}+\zeta\right)\|\p_{n+1}-\x_n\|^2_{\bm{S}}+ \zeta\left(\zeta+\zeta_{\bm{M}}\tau\right)\|\p_{n}-\x_{n-1}\|_{\bm{S}}^2.
\end{align}
In addition, by the Cauchy--Schwarz and Young inequalities, \eqref{eq:proof3} leads to
\begin{align}\label{eq:proof9}
	-2\tau\scal{\bu_n  - \bu_{n+1}}{\bm{C}\x_n-\bm{C}\x}_{\bm{S}^{-1}} 	&\leq 	2\tau	\|\bu_n  - \bu_{n+1}\|_{\bm{S}^{-1}}  \|\bm{C}\x_n-\bm{C}\x\|_{\bm{S}^{-1}} \nonumber\\
	&\leq 	\frac{\tau}{\zeta}	\|\bu_n  - \bu_{n+1}\|_{\bm{S}^{-1}}^2  + \zeta\tau \|\bm{C}\x_n-\bm{C}\x\|_{\bm{S}^{-1}}^2  \nonumber\\
    &\leq 	\frac{2\tau}{\zeta}(	\|\bu_n \|_{\bm{S}^{-1}}^2+\|\bu_{n+1}\|_{\bm{S}^{-1}}^2)  + \zeta\tau \|\bm{C}\x_n-\bm{C}\x\|_{\bm{S}^{-1}}^2  \nonumber\\
	&\leq 2\zeta\tau\left(\|\p_{n+1}-\x_n\|^2_{\bm{S}}+\|\p_{n}-\x_{n-1}\|_{\bm{S}}^2\right) %\nonumber\\
    %& \hspace{3cm}
    +\zeta\tau\|\bm{C}\x_n-\bm{C}\x\|_{\bm{S}^{-1}}^2.
\end{align}
Therefore, \eqref{eq:proof7}-\eqref{eq:proof9} allow us to deduce from \eqref{eq:proof6} that
\begin{align}\label{eq:proof10} 
	\|\p_{n+1}-&\x\|^2_{\bm{S}}+2\scal{\bu_{n+1}}{\p_{n+1}-\x} \nonumber\\
	&\leq  \|\x_{n}-\x\|^2_{\bm{S}}+2\scal{\bu_{n}}{\p_{n}-\x}+\lambda_1\|\p_{n}-\x_{n-1}\|^2_{\bm{S}} \nonumber\\
    %+\left(\zeta+\nu-\frac{\hat{\rho}}{\tau}(\zeta(\zeta+\zeta_{\bm{M}}\tau)+2\tau\zeta)\right)\|\p_{n}-\x_{n-1}\|^2_{\bm{S}} \nonumber\\
	&\hspace{1cm}-\big(1-\zeta+2\hat{\rho}(2\zeta +\zeta_{\bm{M}}(\tau \zeta_{\bm{M}}+ \zeta ))\big)\|\p_{n+1}-\x_n\|^2_{\bm{S}}-2\hat{\rho}(\tau+\zeta) \| \bm{C}\x_n- \bm{C}\x\|^2_{\bm{S}^{-1}} \nonumber\\
	&\hspace{1cm}-2\tau\scal{\p_{n+1}-\x}{\bm{C}\x_n-\bm{C}\x} +4\hat{\rho} \scal{\x_n-\p_{n+1}}{\bm{C}\x_n-\bm{C}\x}.
\end{align}	
Using the $\beta$-cocoercivity of $\bm{C}$, the last two terms of \eqref{eq:proof10} can be upper-bounded as follows: 
\begin{align}\label{eq:proof11}
	-2\tau&\scal{\bm{C}\x_n-\bm{C}\x}{\p_{n+1}-\x} -4\hat{\rho}\scal{\p_{n+1}-\x_{n}}{\bm{C}\x_n-\bm{C}\x} \nonumber\\
	&= -2(\tau+2\hat{\rho})\scal{\bm{C}\x_n-\bm{C}\x}{\p_{n+1}-\x_{n}}
	-2\tau\scal{\bm{C}\x_n-\bm{C}\x}{\x_{n}-\x}\nonumber\\
	&\le 2|\tau+2\hat{\rho}|
	\|\bm{C}\x_n-\bm{C}\x\|_{\bm{S}^{-1}}\|\p_{n+1}-\x_{n}\|_{\bm{S}}
	-2\tau\scal{\bm{C}\x_n-\bm{C}\x}{\x_{n}-\x}\nonumber\\
	&\leq \varepsilon|\tau+2\hat{\rho}|\|\p_{n+1}-\x_n\|^2_{\bm{S
	}}+\frac{|\tau+2\hat{\rho}|}{\varepsilon}\|\bm{C}\x_{n}-\bm{C}\x\|^2_{\bm{S
		}^{-1}}-2\beta\tau\|\bm{C}\x_n-\bm{C}\x\|^2_{\bm{S^{-1}
	}}\nonumber\\
	&= \varepsilon|\tau+2\hat{\rho}|\|\p_{n+1}-\x_n\|^2_{\bm{S
	}}+\frac{1}{\varepsilon}\left(|\tau+2\hat{\rho}|-2\beta\tau\varepsilon\right)\|\bm{C}\x_n-\bm{C}\x\|^2_{\bm{S
		}^{-1}}.
\end{align}
Then, combining \eqref{eq:proof10} and \eqref{eq:proof11} leads to
\begin{align}\label{eq:proof12}
	\|\p_{n+1}-\x\|^2_{\bm{S}}+&2\scal{\bu_{n+1}}{\p_{n+1}-\x} \nonumber\\
	&\leq  \|\x_{n}-\x\|^2_{\bm{S}}+2\scal{\bu_{n}}{\p_{n}-\x}+\lambda_1\|\p_{n}-\x_{n-1}\|^2_{\bm{S}} \nonumber\\
	&\hspace{1cm}-\big(1-\varepsilon|\tau+2\hat{\rho}|-\zeta+2\hat{\rho}(2\zeta +\zeta_{\bm{M}}(\tau \zeta_{\bm{M}}+ \zeta ))\big)\|\p_{n+1}-\x_n\|^2_{\bm{S}} \nonumber\\
	&\hspace{1cm}+ \frac{1}{\varepsilon}\left(|\tau+2\hat{\rho}|-2\varepsilon(\beta\tau+\hat{\rho}(\tau+\zeta))\right) \| \bm{C}\x_n- \bm{C}\x\|^2_{\bm{S}^{-1}}.
\end{align}	
According to Assumption \ref{assum:2}\ref{assum:24},
$\tau > \underline{\tau}_\zeta$ $\Rightarrow$
$\beta\tau+\hat{\rho}(\tau+\zeta)> 0$.
Thus, to eliminate the dependence
on $\|\bm{C}\x-\bm{C}\x_n \|^2_{\bm{S}^{-1}}$, we set
$\varepsilon =\dfrac{|\tau+2\hat{\rho}|}{2\beta\tau+2\hat{\rho}(\tau+\zeta)}$
in the resulting bound when $\tau+2\hat{\rho}\neq 0$, leading to
\begin{align}\label{eq:proof12bis}
	\|\p_{n+1}-\x\|^2_{\bm{S}}&+2\scal{\bu_{n+1}}{\p_{n+1}-\x} \nonumber\\
	&\leq  \|\x_{n}-\x\|^2_{\bm{S}}+2\scal{\bu_{n}}{\p_{n}-\x}+\lambda_1\|\p_{n}-\x_{n-1}\|^2_{\bm{S}} \nonumber\\
	&\hspace{0.5cm}-\Big(1-\frac{(\tau+2\hat{\rho})^2}{2\beta\tau+2\hat{\rho}(\tau+\zeta)}-\zeta+2\hat{\rho}(2\zeta +\zeta_{\bm{M}}(\tau \zeta_{\bm{M}}+ \zeta ))\Big)\|\p_{n+1}-\x_n\|^2_{\bm{S}}.
\end{align}	
It follows from \eqref{eq:proof12}
that \eqref{eq:proof12bis} also holds
if $\tau+2\hat{\rho}= 0$.
In addition, \eqref{eq:NLFBMM-c} yields
	\begin{align}\label{eq:proof13}
		\|\x_{n+1}-\x\|^{2}_{\bm{S}}=&(1-\theta)\|\x_n-\x\|_{\bm{S}}^{2}+\theta\|\p_{n+1}-\x\|_{\bm{S}}^{2}-\theta(1-\theta)\|\p_{n+1}-\x_n\|_{\bm{S}}^{2}. 
	\end{align}
Therefore, using \eqref{eq:proof12bis} and \eqref{eq:proof13}, and adding $\theta\lambda_1\|\p_{n+1}-\x_{n}\|^2_{\bm{S}}$, we conclude that
\begin{align}
	\|\x_{n+1}-\x\|^2_{\bm{S}}&+2\theta\scal{\bu_{n+1}}{\p_{n+1}-\x} +\theta\lambda_1\|\p_{n+1}-\x_{n}\|^2_{\bm{S}}\nonumber\\
	&\leq  \|\x_{n}-\x\|^2_{\bm{S}}+2\theta\scal{\bu_{n}}{\p_{n}-\x}+\theta \lambda_1\|\p_{n}-\x_{n-1}\|^2_{\bm{S}} \nonumber\\
	&-\theta\left(2-\theta-2\eta\zeta
    -\frac{(\tau+2\widehat\rho)^2}
{2\beta\tau+2\widehat\rho(\tau+\zeta)}+2\widehat\rho
\big(
2\zeta+\zeta_{\bm M}
(\tau\zeta_{\bm M}+\zeta)
\big)
%\frac{2\hat{\rho}}{\tau}\left( (\zeta+\zeta_{\bm{M}}\tau)^2+ 4\zeta\tau \right)
\right)\|\p_{n+1}-\x_n\|^2_{\bm{S}}
\end{align}	
and the result follows.
\item Let us now prove the second assertion. We have
\begin{align*}
    a_{n+1} &= \|\x_{n+1}-\x\|^2_{\bm{S}}+2\theta\scal{\bu_{n+1}}{\p_{n+1}-\x} +\theta\lambda_1\|\p_{n+1}-\x_{n}\|^2_{\bm{S}}\\
    &= \|\x_{n+1}-\x\|^2_{\bm{S}} +2\theta\scal{\bu_{n+1}}{\p_{n+1}-\x_{n+1}}+2\theta\scal{\bu_{n+1}}{\x_{n+1}-\x} +\theta\lambda_1\|\p_{n+1}-\x_{n}\|^2_{\bm{S}}\\
    &= \|\x_{n+1}-\x\|^2_{\bm{S}} +2\theta(1-\theta)\scal{\bu_{n+1}}{\p_{n+1}-\x_{n}}+2\theta\scal{\bu_{n+1}}{\x_{n+1}-\x}+\theta\lambda_1\|\p_{n+1}-\x_{n}\|^2_{\bm{S}}\\
%& \geq \|\x_{n+1}-\x\|^2_{\bm{S}} -\theta\nu\|\p_{n+1}-\x_{n}\|_{\bm{S}}^2+2\theta\scal{\bu_{n+1}}{\x_{n+1}-\x} +\theta\lambda_1\|\p_{n+1}-\x_{n}\|^2_{\bm{S}} \\
%& \geq \|\x_{n+1}-\x\|^2_{\bm{S}} -\theta\nu\|\p_{n+1}-\x_{n}\|_{\bm{S}}^2-2\theta\|\bu_{n+1}\|_{\bm{S}^{-1}}\|\x_{n+1}-\x\|_{\bm{S}}+\theta\lambda_1\|\p_{n+1}-\x_{n}\|^2_{\bm{S}} \\
%& \geq \|\x_{n+1}-\x\|^2_{\bm{S}} -\theta\nu\|\p_{n+1}-\x_{n}\|_{\bm{S}}^2-\frac{\theta}{\zeta}\|\bu_{n+1}\|_{\bm{S}^{-1}}^2+\theta\zeta \|\x_{n+1}-\x\|_{\bm{S}}^2 +\theta\lambda_1\|\p_{n+1}-\x_{n}\|^2_{\bm{S}} \\
& \geq \|\x_{n+1}-\x\|^2_{\bm{S}} -2\theta(\eta-1)\zeta\|\p_{n+1}-\x_{n}\|_{\bm{S}}^2-\theta( \zeta^{-1}\|\bu_{n+1}\|^2_{\bm{S}^{-1}} +\zeta\|\x_{n+1}-\x\|^2_{\bm{S}})\nonumber\\ &\quad+\theta\lambda_1\|\p_{n+1}-\x_{n}\|^2_{\bm{S}} \\
& \geq (1-\theta\zeta)\|\x_{n+1}-\x\|^2_{\bm{S}} -\theta((2\eta-1)\zeta-\lambda_1)\|\p_{n+1}-\x_{n}\|^2_{\bm{S}} \\
& = (1-\theta \zeta )\|\x_{n+1}-\x\|^2_{\bm{S}}-2\theta\hat{\rho}\zeta\big(\tau^{-1}\zeta+\zeta_{\bm{M}}+2\big)
%\frac{\theta\hat{\rho}\zeta}{\tau}(\zeta+\zeta_{\bm{M}}\tau+ 2\tau )
\|\p_{n+1}-\x_{n}\|^2_{\bm{S}}\\
& \geq (1-\theta \zeta )\|\x_{n+1}-\x\|^2_{\bm{S}}\geq 0,
\end{align*}
where the third equality stems from
\eqref{eq:NLFBMM-c},
the first inequality  from \eqref{eq:proof5},
and the second one from
\eqref{eq:proof3}. The positivity of $(1-\theta\zeta)$ is ensured since $\zeta < 1/2$ and $\theta < 2$.
\end{enumerate}
	\end{proof}
\begin{rem}
When $\theta\in[1,2[$ and $\bm{S}-\tau\bm{M}$ is monotone, one can take $\eta=1$ instead of $\eta=\theta$. This increases $\lambda$ by $2(\theta-1)\zeta$, thereby making the condition $\lambda>0$ less restrictive.
%The definition of parameter $\eta$  in \eqref{def:nu} shows that, when $(\bm{S}-\tau \bm{M})$ is monotone and $\theta\geq 1$, $\lambda$ is larger, which is favorable in terms of  convergence behaviour.
%yields less restrictive parameter conditions for guaranteeing convergence. %As shown in Section~\ref{sec:MR}, the assumption that $-(\tau \bm{M}-\bm{S})$ is monotone holds in our applications.
\end{rem}

    The theorem below establishes the convergence of Algorithm~\ref{algo:NLFBMM} to a solution to Problem~\ref{pro:1}.
	\begin{teo}
		\label{teo:NLFBMM} In the context of Problem~\ref{pro:1},
		let $(\x_0,\bu_0)\in \bm{\H}^2$ and consider the sequence $(\x_n,\bu_n)_{n\geq 1}$ generated by Algorithm~\ref{algo:NLFBMM}. Suppose that
		$\lambda$ defined in \eqref{e:defmutn}
		is positive.		
		Then the following hold.
		\begin{enumerate}
			\item\label{teo:NLFBMM1}  $(\x_n)_{n\in\N}$ converges 
			weakly to a solution to Problem~\ref{pro:1}.
			\item\label{teo:NLFBMM2} If $\bm{A}$ is $(\mu\bm{S},\rho\bm{S}^{-1})$-semimonotone on $\pmb{\mathbb{S}}$ for some $\mu \in \RPP$, then $(\x_n)_{n\in \N}$
			converges R-linearly to the unique solution to
			Problem~\ref{pro:1}.
		\end{enumerate}
	\end{teo}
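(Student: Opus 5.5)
The plan is to combine the quasi-Fejér estimate of Proposition~\ref{prop:NLFBMMnew} with an Opial-type argument for part~\ref{teo:NLFBMM1}. For part~\ref{teo:NLFBMM2} we add a strong-monotonicity term to the same energy argument.

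\textbf{Part~\ref{teo:NLFBMM1}.} Fix $\x\in\widetilde{\bm Z}$. By Proposition~\ref{prop:NLFBMMnew}, $(a_n)_{n\ge1}$ is nonnegative and nonincreasing, since $\lambda>0$. Hence it converges, and telescoping \eqref{eq:quasifej} gives $\sum_n\|\p_{n+1}-\x_n\|_{\bm S}^2<+\infty$, so $\p_{n+1}-\x_n\to\bm 0$. Then \eqref{eq:proof3} yields $\bu_n\to\bm 0$, and $\x_{n+1}-\x_n=\theta(\p_{n+1}-\x_n)\to\bm0$. By \eqref{eq:anboundbelow}, $(\x_n)$ is bounded. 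In the definition \eqref{e:defan} of $a_n$, the cross term $2\theta\scal{\bu_n}{\p_n-\x}$ tends to $0$ (bounded times null), and the $\lambda_1$ term tends to $0$. Hence $\lim\|\x_n-\x\|_{\bm S}$ exists for every $\x\in\widetilde{\bm Z}$.

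It remains to show that every weak cluster point lies in $\widetilde{\bm Z}$. From \eqref{eq:NLFBMM-a} and \eqref{eq:NLFBMM-b},
\[
\bm v_n:=\tau^{-1}\big(\bm S(\x_n-\p_{n+1})+\bu_n-\bu_{n+1}\big)+\bm C\p_{n+1}-\bm C\x_n\in(\bm A+\bm C)\p_{n+1}.
\]
Since $\bm C$ is Lipschitz (being cocoercive w.r.t.\ $\bm S$) and $\p_{n+1}-\x_n\to\bm0$, $\bu_n\to\bm0$, we get $\bm v_n\to\bm0$. If $\x_{k_n}\weakly\bar\x$, then $\p_{k_n+1}\weakly\bar\x$. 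Assumption~\ref{assum:2}\ref{assum:22} (sequential weak-strong closedness) then gives $\bm0\in(\bm A+\bm C)\bar\x$. Opial's lemma in the Hilbert space $(\bm\H,\scal{\cdot}{\cdot}_{\bm S})$, whose norm is equivalent to the original one, gives weak convergence of $(\x_n)$ to a point of $\widetilde{\bm Z}$.

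\textbf{Part~\ref{teo:NLFBMM2}.} The solution $\x$ is unique by Proposition~\ref{prop:uniquesol}. We redo the proof of Proposition~\ref{prop:NLFBMMnew}\ref{prop:NLFBMMnew}, now using the semimonotonicity inequality with the extra term $\mu\tau\|\p_{n+1}-\x\|_{\bm S}^2$ (after scaling by $\tau$). This term carries a factor $2$ on the left of \eqref{eq:proof0pre} and, after the $\theta$-averaging, becomes $\theta$-weighted. All the subsequent bounds are unchanged, so we obtain
\[
a_{n+1}+2\theta\mu\tau\|\p_{n+1}-\x\|_{\bm S}^2\le a_n-\theta\lambda\|\p_{n+1}-\x_n\|_{\bm S}^2 .
\]
The step I expect to be the main obstacle is converting this into a contraction $a_{n+1}\le q\,a_n$ with $q<1$. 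Two bounds are needed.

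First, $a_n$ must be bounded above by a multiple of $\|\x_n-\x\|^2_{\bm S}+\|\p_n-\x_{n-1}\|_{\bm S}^2$. Using Young and \eqref{eq:proof3} on the cross term, this gives $a_n\le c_1\big(\|\x_n-\x\|^2_{\bm S}+\|\p_n-\x_{n-1}\|^2_{\bm S}\big)$.

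Second, the dissipated quantity $2\theta\mu\tau\|\p_{n+1}-\x\|^2_{\bm S}+\theta\lambda\|\p_{n+1}-\x_n\|^2_{\bm S}$ must dominate a multiple of $\|\x_{n+1}-\x\|^2_{\bm S}+\|\p_{n+1}-\x_n\|^2_{\bm S}$. This follows because $\x_{n+1}-\x$ is a combination of $\p_{n+1}-\x$ and $\p_{n+1}-\x_n$.

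The dissipation at step $n$ controls the quantities appearing in $a_{n+1}$, not in $a_n$. I would therefore write $a_{n+1}(1+c)\le a_n$ with $c=c_2/c_1>0$, giving $a_n\le(1+c)^{-(n-1)}a_1$. Then \eqref{eq:anboundbelow} gives $\|\x_n-\x\|^2_{\bm S}\le a_n/(1-\theta\zeta)$, which is R-linear convergence.
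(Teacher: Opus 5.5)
Your proposal is correct and follows essentially the same route as the paper. For part (i), it uses the monotonicity and convergence of $(a_n)$ from Proposition~\ref{prop:NLFBMMnew}, sequential weak--strong closedness of $\gra(\bm A+\bm C)$, and Opial's lemma in $(\bm\H,\scal{\cdot}{\cdot}_{\bm S})$; for part (ii), it turns the refined estimate containing $2\theta\tau\mu\|\p_{n+1}-\x\|_{\bm S}^2$ into $a_n-a_{n+1}\geq c\,a_{n+1}$ by bounding $a_{n+1}$ from above with Young's inequality and \eqref{eq:proof3}. Your $\bm v_n$ is just the paper's $\bm w_n$ rewritten via \eqref{eq:NLFBMM-b}, and in the upper bound on $a_{n+1}$ the one detail worth writing out is $\p_{n+1}-\x=(\x_{n+1}-\x)+(1-\theta)(\p_{n+1}-\x_n)$.
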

	\begin{proof}
		\begin{enumerate}
			\item Let us use the same notation as in  Proposition~\ref{prop:NLFBMMnew}. 
			%Let us first notice
			%that $(a_n)_{n \in \N}$, defined in \eqref{e:defan}, is nonnegative since since \textcolor{blue}{$(1-\zeta \theta) >0$} .
			%$\bm{\xi}_{n}$ be defined as in \eqref{e:defbxi}.
			Since {$(a_n)_{n \in \N}$, defined in \eqref{e:defan}, is nonnegative} (Proposition~\ref{prop:NLFBMMnew}\ref{prop:NLFBMMnewii}) and $\lambda$ defined in \eqref{e:defmutn} is assumed to be positive, Proposition~\ref{prop:NLFBMMnew}\ref{prop:NLFBMMnewi} shows that $(a_n)_{n\in \N}$ is nonincreasing, 
            convergent,  
			and $\sum_{n \in \N} \|\p_{n+1}-\x_{n}\|^2_{\bm{S}} < + \infty$. 
            In particular,  we deduce from \eqref{eq:anboundbelow} %that %$(\x_n)_{n \in \N}$ is bounded and, consequently, $(\p_n)_{n \in \N}$ is also bounded. We conclude 
            that $(\x_n)_{n \in \N}$ is bounded.
            Since $\p_{n}-\x_{n-1}\to\bm{0}$, $(\p_n)_{n\in\N}$ is also bounded and 
            it follows from \eqref{eq:proof3}
            that $\bu_{n}\to \bm{0}$.
            We deduce from these properties and  \eqref{e:defan} that
            $\lim_{n\to+\infty} \|\x_{n}-\x\|^2_{\bm{S}} = \lim_{n\to+\infty} a_n$
            is finite.
            Additionally, by the Lipschitz property of $(\tau \bm{M}-\bm{S})$ with respect to $\bm{S}$, we have
			\begin{align}\label{eq:prooft1}
				\|\bm{S}^{-1}\tau( \bm{M}{\p_{n+1}}-\bm{M}\x_n)\|_{\bm{S}}&=\|\tau \bm{M}{\p_{n+1}}-\tau\bm{M}\x_n\|_{\bm{S}^{-1}} \nonumber\\
				&\leq \|(\tau \bm{M}-\bm{S}){\p_{n+1}}-(\tau\bm{M}-\bm{S})\x_n\|_{\bm{S}^{-1}}+\|{\p_{n+1}}-\x_n\|_{\bm{S}}\nonumber\\
				&\leq (\zeta+1)\|\p_{n+1}-\x_n\|_{\bm{S}}.
			\end{align}
			Since $\tau>0$ and $\bm{S}$ is bounded, $\bm{M}\p_{n+1}-\bm{M}\x_n \to 0$. %Additionally, from the convergence of $(a_n)_{n \in \N}$
			%we deduce the convergence of 
			%$(\|\bm{\xi}_{n}-\x\|_{\bm{S}})_{n\in \N}$, 
			%which, from the definition of $(\bm{\xi}_n)_{n\in \N}$, and the fact that $\bm{S}^{-1}(\tau\bm{M}\x_{n+1}-\tau\bm{M}\x_n) \to 0$, allows us to conclude that $(\|\x_{n}-\x\|_{\bm{S}})_{n\in \N}$
			%is convergent. Hence, $(\x_n)_{n \in \N}$,  is bounded. 
            %Now, note that,
			%\begin{align}\label{eq:prooft2}
			%	\|\bu_{n+1}\|_{\bm{S}^{-1}} &= \|(\tau \bm{M} - \bm{S})\p_{n+1} -(\tau \bm{M} -\bm{S})\x_n\|_{\bm{S}^{-1}}\nonumber\\
			%	&\leq \zeta\|\p_{n+1}-\x_n\|^2_{\bm{S}}.
			%\end{align}
			Moreover, the cocoercivity of $\bm{C}$ with respect to $\bm{S}$ yields
			\begin{align}\label{eq:prooft3}
				\|\bm{C}\p_{n+1}-\bm{C}\x_n\|_{\bm{S}^{-1}}\leq \frac{1}{\beta}\|\p_{n+1}-\x_n\|_{\bm{S}}.
			\end{align}
			Therefore, it follows from  \eqref{eq:prooft1}, %\eqref{eq:prooft2}
 \eqref{eq:prooft3} and the facts
 that $\bm{u}_n\to \bm{0}$ and $\p_{n+1}-\x_n \to \bm{0}$ that
			\begin{equation}\label{eq:wnto0}
				\bm{w}_n := \bm{M} \x_n-\bm{M}\p_{n+1}+\bu_n/\tau+(\bm{C}\p_{n+1}-\bm{C}\x_n) \to \bm{0}.
			\end{equation}
			Now, let $(\x_{n_k})_{k\in \N}$ be a weakly convergent subsequence of $(\x_n)_{n \in \N}$. Since $\p_{n+1}-\x_n\to\bm{0}$, $(\p_{n_k+1})_{k\in \N}$ is also weakly convergent and has the same limit. It follows from \eqref{eq:wnto0} that $\bm{w}_{n_k} \to \bm{0}$ and from \eqref{eq:NLFBMM-a} that 
			\begin{align}
				(\forall k \in \N)\quad & \bm{w}_{n_k}
				\in (\bm{A}+\bm{C})\p_{n_k+1}.
			\end{align}	
			Since $\gra(\bm{A}+\bm{C})$ is sequentially weak-strong closed (Assumption~\ref{assum:2}\ref{assum:22}), we conclude that every weak cluster point of $(\x_{n})_{n \in \N}$ belongs to $\zer(\bm{A}+\bm{C})$. 
			The result follows from Opial's lemma \cite[Lemma 2.47]{bauschkebook2017} applied in
            the Hilbert space
        $(\bm{\H},\scal{\cdot}{\cdot}_{\bm S})$.
			\item By Proposition~\ref{prop:uniquesol}, $ \widetilde{\bm{Z}}$ is a singleton. Now, the $(\mu \bm{S},\rho\bm{S}^{-1})$-semimonotonicity of $\bm{A}$ allows us to refine
			\eqref{eq:quasifej} as shown below. First, \eqref{eq:proof0} becomes
			\begin{align}
				\tau\mu\|\p_{n+1}-\x\|^2_{\bm{S}} \leq &\scal{\bm{S}\x_n + \bu_n  - (\bm{S}\p_{n+1}+\bu_{n+1})-\tau \bm{C}\x_n+\tau \bm{C}\x}{\p_{n+1}-\x}\nonumber\\
	& -\frac{\hat{\rho}}{\tau}\|\bm{S}\x_n + \bu_n  - (\bm{S}\p_{n+1}+\bu_{n+1})-\tau \bm{C}\x_n+\tau \bm{C}\x\|^2_{\bm{S}^{-1}}.
			\end{align} Hence, by proceeding similarly to the proof of Proposition~\ref{prop:NLFBMMnew}\ref{prop:NLFBMMnewi}, we obtain
			\begin{align}\label{eq:desanref}
				a_n &\geq 2\tau\mu{\theta}\|\p_{n+1}-\x\|^2_{\bm{S}}+{\theta} \lambda 
				\|\p_{n+1}-\x_n\|^2_{\bm{S}} 
				+a_{n+1},
			\end{align}
			where $a_n$ is defined in \eqref{e:defan}. The parameter $\lambda_1$ defined in \eqref{e:deflambda_1}
            is nonnegative since 
            $\eta\geq 1$, $\zeta\geq 0$, and $\hat\rho\leq 0$.
            Let us 
            set $\vartheta_1=\theta\min\left\{\tau \mu , \frac{\lambda}{3}\right\}$, and $\vartheta_2=\vartheta_1\min\left\{1, \frac{1}{\lambda_1}\right\}$ if $\lambda_1>0$
            and $\vartheta_2=\vartheta_1$ otherwise.
            Since $\zeta\in~[0,1/2[$ and $\theta \in ]0,2[$, it follows
            from \eqref{eq:NLFBMM-c} and \eqref{eq:desanref}  that, for every $n \in \N$,
			\begin{align}
				&a_n-a_{n+1}\nonumber\\ 
				%\geq&\theta \tau\mu\|\p_{n+1}-\x\|^2_{\bm{S}}+\theta \tau\mu\|\p_{n+1}-\x\|^2_{\bm{S}}+\frac{\theta \lambda}{3} \|\p_{n+1}-\x_n\|^2_{\bm{S}}+\frac{\theta \lambda}{3} \|\p_{n+1}-\x_n\|^2_{\bm{S}} + \frac{\theta \lambda}{3} 
				%\|\p_{n+1}-\x_n\|^2_{\bm{S}}\nonumber\\ 
                \geq&\;\vartheta_1
                %\min\left\{\tau \mu , \frac{\lambda}{3}\right\}
                \Big(2\|\p_{n+1}-\x\|^2_{\bm{S}}+3\|\p_{n+1}-\x_n\|^2_{\bm{S}}\Big)\nonumber\\ 
                \geq&\; \vartheta_1
                %\min\left\{\tau \mu , \frac{\lambda}{3}\right\}
                \Big(\frac{1}{2}\|\x_{n+1}-\x\|^2_{\bm{S}}+\|\p_{n+1}-\x\|^2_{\bm{S}}+3\|\p_{n+1}-\x_n\|^2_{\bm{S}}-\|\p_{n+1}-\x_{n+1}\|^2_{\bm{S}}\Big)\nonumber\\ 
                = &\; \frac{\vartheta_1}{2}
                %\min\left\{\tau \mu , \frac{\lambda}{3}\right\} 
                \Big(\|\x_{n+1}-\x\|^2_{\bm{S}}+2\|\p_{n+1}-\x\|^2_{\bm{S}}+2{\big(3-(1-\theta)^2\big)}\|\p_{n+1}-\x_n\|^2_{\bm{S}}\Big)\nonumber\\ 
                = &\;\frac{\vartheta_1}{2}
                %\min\left\{\tau \mu , \frac{\lambda}{3}\right\} 
                \Big(\|\x_{n+1}-\x\|^2_{\bm{S}}+2(\|\p_{n+1}-\x\|^2_{\bm{S}}+\|\p_{n+1}-\x_n\|^2_{\bm{S}})+2(1+2\theta-\theta^2)\|\p_{n+1}-\x_n\|^2_{\bm{S}}\Big)\nonumber\\ 
                 \geq &\; \frac{\vartheta_2}{2}
                 %\min\left\{\tau \mu , \frac{\lambda}{3}\right\} 
                 %\min\left\{1, \frac{1}{\lambda_1}\right\} 
                 \Big(\|\x_{n+1}-\x\|^2_{\bm{S}}+2(\|\p_{n+1}-\x\|^2_{\bm{S}}+\|\p_{n+1}-\x_n\|^2_{\bm{S}})+\theta\lambda_1\|\p_{n+1}-\x_n\|^2_{\bm{S}}\Big),
                 \label{e:dantheta2an}
                 \end{align}
                 where we have used the fact that 
                 $2(1+2\theta-\theta^2)-\theta
=(2-\theta)(2\theta+1)>0$.
%\textcolor{red}{
%$\vartheta_2\leq\vartheta_1$}.
%and
%$\vartheta_2\lambda_1\leq\vartheta_1$}.
%                 \textcolor{blue}{$2(2+2\theta-\theta^2)=2(2+\theta(2-\theta))\geq 4 \geq 2\theta$ and that $\lambda > 0$ implies $2 > \lambda_1$.}
                 When $\zeta>0$, using \eqref{eq:proof3} 
                 and noting that $\theta\zeta\in ]0,1[$ yield
                 \begin{align}
                 &a_n-a_{n+1}\nonumber\\
                \geq &\; \frac{\vartheta_2}{2}
                %\min\left\{\tau \mu , \frac{\lambda}{3}\right\} \min\left\{1, \frac{1}{\lambda_1}\right\}
                \Big(\|\x_{n+1}-\x\|^2_{\bm{S}}+\theta\zeta (\|\p_{n+1}-\x\|^2_{\bm{S}}+\zeta^{-2}\|\bu_{n+1}\|^2_{\bm{S}^{-1}})+\theta\lambda_1\|\p_{n+1}-\x_n\|^2_{\bm{S}}\Big)\nonumber\\ 
                \geq &  \frac{\vartheta_2}{2}
                %\min\left\{\tau \mu , \frac{\lambda}{3}\right\}\min\left\{1, \frac{1}{\lambda_1}\right\} 
                \Big(\|\x_{n+1}-\x\|^2_{\bm{S}}+2\theta\scal{\p_{n+1}-\x}{\bu_{n+1}}+\theta\lambda_1\|\p_{n+1}-\x_n\|^2_{\bm{S}}\Big)\nonumber\\ 
                =&\;\frac{\vartheta_2}{2}
                %\min\left\{\tau \mu , \frac{\lambda}{3}\right\}\min\left\{1, \frac{1}{\lambda_1}\right\} 
                a_{n+1}.
		\end{align}
        When $\zeta=0$, $\lambda_1=0$
        and the inequality $a_n-a_{n+1}\geq (\vartheta_2 a_{n+1})/2$ follows directly from \eqref{e:dantheta2an}.
			Consequently,  $(a_n)_{n \in \N}$ converges Q-linearly to $0$ with rate 
            $1/(1+\vartheta_2/2)$
            and the R-linear convergence of 
			$(\x_n)_{n \in \N}$ to $\x$ follows from \eqref{eq:anboundbelow}.
            %Moreover, by \eqref{e:defan} and
			% \eqref{eq:desxix},
			% %the $\zeta$-Lipschitizian property of $\tau \bm{M}-\bm{S}$, and the definition of $\bm{\xi}_{n}$ in \eqref{e:defbxi}, we have that
			% the following inequalities hold:
			% \begin{align}\label{eq:xnlinear}
			% 	a_{n} &\geq \|\bm{\xi}_n-\x\|^2_{\bm{S}}
			% 	+\zeta(1-\zeta)\|\x_{n}-\x_{n-1}\|^2_{\bm{S}}\nonumber\\
			% 	&\geq \|\bm{\xi}_n-\x\|^2_{\bm{S}} + \frac{(1-\zeta)}{\zeta}
			% 	\|\bm{\xi}_{n}-\x_{n-1}\|^2_{\bm{S}} \nonumber\\
   %              &\geq \|\bm{\xi}_n-\x\|^2_{\bm{S}} + 
			% 	\|\bm{\xi}_{n}-\x_{n-1}\|^2_{\bm{S}}\nonumber\\
			% 	&\geq \frac{1}{2}\|\x_{n-1}-\x\|^2_{\bm{S}}.%\min\left\lbrace 1, \frac{(1-\zeta)}{\zeta}\right\rbrace 
			% \end{align} 
			% The linear convergence of 
			% $(\x_n)_{n \in \N}$ to $\x$ follows.
		\end{enumerate}
	\end{proof}
%\begin{rem}\label{rem:condzetarho}
%We can provide a simple necessary condition on $\zeta$
%for $\lambda>0$.
%, in accordance with Assumption~\ref{assum:2}\ref{assum:24}, 
%$\zeta \in [0,1/(2(1-4\hat{\rho}))[$
%is a necessary
%condition for $\lambda>0$.  Indeed, 
%Indeed, by setting $\xi = \zeta_{\bm{M}}\tau$, we have $2-\theta-2(1+\nu)\zeta+\frac{2\hat{\rho}}{\tau}
%            \left( \left(\zeta+\xi\right)^2+4\tau\zeta\right)\geq \lambda>0$, hence $\tau (2-\theta-2(1+\nu)\zeta+8\hat{\rho}\zeta) >-2\hat{\rho}(\zeta+\xi)^2$, which implies that $2-\theta-2(1+\nu-4\hat{\rho})\zeta> 0$.
%            This is satisfied when
%            $\zeta \in [0,(2-\theta)/(2(1+\nu-4\hat{\rho}))[$. 
            %If $\theta\in [1,2]$ and $(\bm{S}-\tau \bm{M})$ is  monotone, then $\zeta \in [0,(2-\theta)/(2(1-4\hat{\rho}))[$. Otherwise, $\zeta \in [0,(2-\theta)/(2(1-4\hat{\rho}+|1-\theta|))[$. 
%            This constitutes a subinterval of the one initially considered in Assumption~\ref{assum:2}\ref{assum:24}.
%\end{rem}
We now provide further insight into the choice of the step size \(\tau\) ensuring the convergence of Algorithm~\ref{algo:NLFBMM}.

    \begin{prop}\label{prop:overlinezeta}
        In the context of Problem~\ref{pro:1}, with the notation of
Proposition~\ref{prop:NLFBMMnew},
        the following properties hold.
\begin{enumerate}        
\item \label{prop:overlinezetai}       For fixed $\zeta$, $\xi$, and $\eta$, the convergence condition
        $\lambda>0$ is equivalent to the following cubic polynomial inequality:
   \begin{equation}
   \label{e:Psup0}
   P(\tau)= -\tau^3+a_2\tau^2+a_1\tau+a_0>0,
   \end{equation}
   where
          \begin{equation*}
        a_0 = 4\hat{\rho}^2\zeta(\zeta+\xi)^2, \quad a_1 = 2c\hat{\rho}\zeta - 4\hat{\rho}^2 + 4\hat{\rho}(\beta+\hat{\rho})(\zeta+\xi)^2, \quad 
        a_2 =2 c(\beta+\hat{\rho}) - 4\hat{\rho},
    \end{equation*} 
    with $c =2-\theta-2(\eta-4\hat{\rho})\zeta$
%    \begin{equation*}
%        a_0 = 4\hat{\rho}^2\zeta(\zeta+\xi)^2, \quad a_1 = 2(1-2\zeta+8\hat{\rho}\zeta)\hat{\rho}\zeta+4\hat{\rho}(\beta+\hat{\rho})(\zeta+\xi)^2-4\hat{\rho}^2, \quad 
%        a_2 = 2(1-2\zeta+8\hat{\rho}\zeta)(\beta+\hat{\rho})-4\hat{\rho}.
%    \end{equation*} 
    and $\xi \in [1-\zeta,1+\zeta]$ as defined in Remark~\ref{re:firstrem}\ref{re:firstremi}.
\item \label{prop:overlinezetaii} If \eqref{e:Psup0} holds, then $\zeta \in [0,(2-\theta)/(2(\eta-4\hat{\rho}))[$.
\item\label{prop:overlinezetaiii} If $\theta \in \Big]0 ,\frac{2}{1+\sqrt{{-\hat{\rho}/\beta }} }\Big[$, there exists 
\begin{equation}\label{e:overlinezeta}
        \overline{\zeta} \in \left]0, \min\left\lbrace \frac{2-\theta}{2(\eta-4\hat{\rho})}, \frac{(\beta+\hat{\rho})((2-\theta)(\beta+\hat{\rho})-4\hat{\rho})}
    {2(\eta-4\hat{\rho})(\beta+\hat{\rho})^2-2\hat{\rho}}
    %    \frac{\beta^2-\hat{\rho}^2}
    %{2(1-4\hat{\rho})(\beta+\hat{\rho})^2-2\hat{\rho}}
    \right\rbrace
    \right[
    \end{equation}
    such that,
    for every $\zeta\in [0,\overline{\zeta}[$ and
    $\tau \in ]\tau_{1,\zeta},\tau_{2,\zeta}[$, \eqref{e:Psup0}  holds, where
    $(\tau_{1,\zeta},\tau_{2,\zeta})\in [\underline{\tau}_\zeta,+\infty[^2$ with $\tau_{1,\zeta} <\tau_{2,\zeta}$.\footnote{The quantities $\tau_{1,\zeta}$ and $\tau_{2,\zeta}$ also depend on $\xi$ and $\eta$, but, for notational simplicity, we omit this dependence.}
    
%    In particular,
%    \begin{itemize}
%        \item  If  $\tau \bm{M}=\bm{S}$, then
%        $\zeta = 0$, $\overline{\theta}=2(\beta+\hat{\rho})^{-1}\sqrt{\beta} (\sqrt{\beta}-\sqrt{-\hat{\rho}})$,   
%        $\tau_{1,0} = -2\hat{\rho}$,
%        and $\tau_{2,0}=2\beta$.
%        \begin{align}
%        \overline{\theta} &= 2\frac{\sqrt{\beta}} {\sqrt{\beta}+\sqrt{-\hat{\rho}}}\\
        %{\beta+\hat{\rho}}\\
%            \tau_{1,0} &= (2-\theta)(\beta+\hat{\rho})-2\hat{\rho}
%            -\sqrt{((2-\theta)(\beta+\hat{\rho})-2\hat{\rho})^2+4\hat{\rho}\beta},\\
%            \tau_{2,0} &= (2-\theta)(\beta+\hat{\rho})-2\hat{\rho}
%            +\sqrt{((2-\theta)(\beta+\hat{\rho})-2\hat{\rho})^2+4\hat{\rho}\beta}.
%        \end{align}
%        \item If  $\bm{C}=\bm{0}$, 
%        then $\overline{\theta}=2$, $\overline{\zeta} = \frac{2-\theta}{2(1+\nu-4\hat{\rho})}$,
%        and, for every $\zeta \in ]0,\overline{\zeta}[$,
%        $\tau_{1,\zeta} = 0$ and $\tau_{2,\zeta}=-2\hat{\rho}(\zeta+\xi)^2/c$.
%        \item If $\bm{A}$ is monotone, then  $\overline{\theta}=2$,
%       $\overline{\zeta} = \frac{2-\theta}{2(1+\nu)}$ and, for every $\zeta \in ]0,\overline{\zeta}[$,
%        $\tau_{1,\zeta} =0$ and $\tau_{2,\zeta}=2\beta
%        (2-\theta-2(1+\nu)\zeta)$.        
%        (1-2\zeta)$.

%    \end{itemize}
    \end{enumerate}
    \end{prop}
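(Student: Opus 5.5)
The plan is to reduce everything to the explicit expression of $\lambda$ in \eqref{e:defmutn} after substituting $\zeta_{\bm M}=\xi/\tau$ (Remark~\ref{re:firstrem}\ref{re:firstremi}). With this substitution the last term of $\lambda$ becomes $2\hat\rho\tau\big((\zeta+\xi)^2/\tau^2+4\zeta/\tau\big)=2\hat\rho(\zeta+\xi)^2/\tau+8\hat\rho\zeta$. Hence
\begin{equation*}
\lambda = c-\frac{(\tau+2\hat\rho)^2}{2D}+\frac{2\hat\rho(\zeta+\xi)^2}{\tau},\qquad D=(\beta+\hat\rho)\tau+\hat\rho\zeta,
\end{equation*}
with $c=2-\theta-2(\eta-4\hat\rho)\zeta$. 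By Assumption~\ref{assum:2}\ref{assum:24} (i.e.\ $\tau>\underline\tau_\zeta$) we have $D>0$, and also $\tau>0$.

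For \ref{prop:overlinezetai}, I multiply $\lambda$ by the positive quantity $2\tau D$ and expand each piece:
\begin{itemize}
\item $2c\tau D=2c(\beta+\hat\rho)\tau^2+2c\hat\rho\zeta\tau$;
\item $-\tau(\tau+2\hat\rho)^2=-\tau^3-4\hat\rho\tau^2-4\hat\rho^2\tau$;
\item $4\hat\rho(\zeta+\xi)^2D=4\hat\rho(\beta+\hat\rho)(\zeta+\xi)^2\tau+4\hat\rho^2\zeta(\zeta+\xi)^2$.
\end{itemize}
Collecting powers of $\tau$ gives exactly $P(\tau)$ with the stated $a_0,a_1,a_2$. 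Since $2\tau D>0$, the equivalence $\lambda>0\Leftrightarrow P(\tau)>0$ follows.

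For \ref{prop:overlinezetaii}, I argue directly on the displayed form of $\lambda$, which by \ref{prop:overlinezetai} is positive whenever \eqref{e:Psup0} holds. Because $\hat\rho\le0$, $D>0$ and $\tau>0$, the two terms after $c$ are nonpositive, so $\lambda\le c$. Thus $c>0$, which is equivalent to $\zeta<(2-\theta)/(2(\eta-4\hat\rho))$. The denominator is positive since $\eta\ge1$ and $\hat\rho\le0$.

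For \ref{prop:overlinezetaiii}, I first treat $\zeta=0$. Then $\xi=1$, $\underline\tau_0=0$, $a_0=0$, and
\begin{equation*}
P(\tau)/\tau=-\tau^2+b\tau+4\hat\rho\beta,\qquad b=2\big((2-\theta)(\beta+\hat\rho)-2\hat\rho\big).
\end{equation*}
Set $r=\sqrt{-\hat\rho}$ and $s=\sqrt\beta$, so that $s>r$ because $\beta+\hat\rho>0$.
\begin{itemize}
\item Two distinct positive roots require $b>0$ and $b^2>-16\hat\rho\beta$, i.e.\ $(2-\theta)(s^2-r^2)+2r^2>2rs$.
\item Dividing by $s-r>0$, this is $(2-\theta)(s+r)>2r$, i.e.\ $\theta<2/(1+r/s)$, which is exactly the hypothesis.
\end{itemize}
Hence $P>0$ on a nonempty interval $]\tau_{1,0},\tau_{2,0}[\subset]0,+\infty[$.

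Next I pass to small $\zeta>0$ by continuity. The coefficients $a_0,a_1,a_2$ depend continuously on $(\zeta,\xi)$, with $\xi\in[1-\zeta,1+\zeta]$ compact. Pick $\tau^\ast$ strictly inside $]\tau_{1,0},\tau_{2,0}[$. Then $P(\tau^\ast)>0$ persists for all sufficiently small $\zeta$, uniformly in $\xi$. Since $P(\tau)\to-\infty$ as $\tau\to+\infty$, there is a largest root $\tau_{2,\zeta}$ near $\tau_{2,0}$. At $\tau=\underline\tau_\zeta$ we have $D=0$, and there $P(\underline\tau_\zeta)=-\underline\tau_\zeta(\underline\tau_\zeta+2\hat\rho)^2\le 0$, because the terms containing $D$ vanish. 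So there is a root $\tau_{1,\zeta}\in[\underline\tau_\zeta,\tau^\ast[$, which gives the interval $]\tau_{1,\zeta},\tau_{2,\zeta}[$ on which \eqref{e:Psup0} holds.

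Finally, $\overline\zeta$ must be shown to be admissible below the two explicit bounds in \eqref{e:overlinezeta}. The first bound is forced by \ref{prop:overlinezetaii}. The second should be equivalent to an explicit positivity condition guaranteeing that the quadratic part keeps a positive interval, presumably $a_2$ combined with the $D$-constraint. I expect this step to be the main obstacle: identifying precisely which inequality the second bound encodes, and checking that it is compatible with the continuity argument rather than only nonempty for $\zeta$ near $0$. I would first try rewriting $c(\beta+\hat\rho)>-\hat\rho\cdot(\cdot)$ in terms of $\zeta$ and matching it with the stated fraction.
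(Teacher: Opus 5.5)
Your proof is correct. Parts (i) and (ii) follow the paper's argument: you write $\lambda=c-\frac{(\tau+2\hat\rho)^2}{2D}+\frac{2\hat\rho(\zeta+\xi)^2}{\tau}$, clear the positive denominator $2\tau D$, and bound $\lambda\le c$.

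For (iii) your route differs. The paper does not treat $\zeta=0$ separately. It evaluates $P$ at the moving test point $\widetilde\tau_\zeta=a_2/2=c(\beta+\hat\rho)-2\hat\rho$ and uses the factorization
\[
P(\widetilde\tau_\zeta)=(\beta+\hat\rho)\Big(\widetilde\tau_\zeta\, c\,(\widetilde\tau_\zeta-2\hat\rho-2\underline\tau_\zeta)+4\hat\rho(\zeta+\xi)^2(\widetilde\tau_\zeta-\underline\tau_\zeta)\Big).
\]
It then lets $\zeta\to0$. The bracket tends to $\big((2-\theta)\beta-\theta\hat\rho\big)\big((2-\theta)^2\beta+\theta^2\hat\rho\big)$, which is positive exactly when $\theta<2/(1+\sqrt{-\hat\rho/\beta})$. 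Together with $P(\underline\tau_\zeta)\le 0$ and $P\to-\infty$, this yields the interval.

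Your version instead does an exact discriminant analysis of $-\tau^2+a_2\tau+4\hat\rho\beta$ at $\zeta=0$, then uses continuity of $(\zeta,\xi)\mapsto P(\tau^\ast)$ at $(0,1)$. It is shorter, needs no factorization, and ties the threshold on $\theta$ directly to the existence of a positive interval for the $\zeta=0$ quadratic. The paper's version buys an explicit admissible step $\widetilde\tau_\zeta$ for each small $\zeta$. Note that $\widetilde\tau_0$ is the vertex of your quadratic, so it is the natural choice for your $\tau^\ast$.

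The step you flag as the main obstacle is not one. The statement only asserts that some $\overline\zeta$ exists in the given interval, and the property ``for every $\zeta\in[0,\overline\zeta[$ \dots'' is inherited by any smaller $\overline\zeta$. So it suffices to check that both bounds in \eqref{e:overlinezeta} are strictly positive. This holds because $\beta+\hat\rho>0$ (from $\rho>-\beta$), $\hat\rho\le 0$, $\eta\ge 1$, and $\theta<2$. Then take $\overline\zeta=\min\{\overline\zeta_0,\tfrac12 m\}$, where $\overline\zeta_0$ comes from your continuity argument and $m$ is that minimum.

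In the paper, the second bound is just the equivalent form of $\widetilde\tau_\zeta>2(\underline\tau_\zeta+\hat\rho)$. By the factorization above, this is necessary for $P(\widetilde\tau_\zeta)>0$. It is an artifact of that particular test point and plays no role in your argument. Your guess that it combines $a_2$ with the $D$-constraint was right, but you do not need it.

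Two small points to tidy:
\begin{itemize}
\item Take $\tau_{1,\zeta}$ as the largest root in $[\underline\tau_\zeta,\tau^\ast[$ and $\tau_{2,\zeta}$ as the smallest root above $\tau^\ast$, so that $P>0$ on the whole interval. This needs $\tau^\ast>\underline\tau_\zeta$, which holds for small $\zeta$ since $\underline\tau_\zeta\to 0$.
\item When $\hat\rho=0$, one root of the $\zeta=0$ quadratic is $0$ rather than positive, which is harmless.
\end{itemize}
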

    \begin{proof}
    \begin{enumerate}
    \item
        According to \eqref{e:defmutn} and Remark~\ref{re:firstrem}\ref{re:firstremi}, $\lambda$ can be written as \begin{align}\label{e:lambda2}
       \lambda &=\frac{P(\tau)}{2\tau \brk1{(\beta+\hat{\rho})\tau+\hat{\rho}\zeta}},
    \end{align}
    where 

\begin{align}\label{e:defP}
        P(\tau) &=  
        2\brk2{(2-\theta-2\eta\zeta)\tau+2\hat{\rho}\brk1{(\zeta+\xi)^2+4\tau\zeta}}\brk1{(\beta+\hat{\rho})\tau+\hat{\rho}\zeta}-\tau(\tau+2\hat{\rho})^2\\
       &=2\brk2{\tau c+2\hat{\rho}{(\zeta+\xi)^2}}\brk1{(\beta+\hat{\rho})\tau+\hat{\rho}\zeta}-\tau(\tau+2\hat{\rho})^2\nonumber\\
        & = -\tau^3+a_2\tau^2+a_1\tau+a_0.\nonumber
    \end{align}      %\begin{align}\label{e:defP}
    %    P(\tau) &=  
    %    2\brk2{(1-2\zeta)\tau+2\hat{\rho}\brk1{(\zeta+\xi)^2+4\tau\zeta}}\brk1{(\beta+\hat{\rho})\tau+\hat{\rho}\zeta}-\tau(\tau+2\hat{\rho})^2\\
    %    & = -\tau^3+a_2\tau^2+a_1\tau+a_0.\nonumber
    %\end{align} 
    In view of Assumption~\ref{assum:2}\ref{assum:24},
$(\beta+\hat\rho)\tau+\hat\rho\zeta>0$,
and since $\tau>0$, the denominator in \eqref{e:lambda2}
is positive. Hence, $\lambda>0$ if and only if $P(\tau)>0$.    

\item %We can provide a simple necessary condition on $\zeta$
Assume that
%for 
$\lambda>0$.
%, in accordance with Assumption~\ref{assum:2}\ref{assum:24}, 
%$\zeta \in [0,1/(2(1-4\hat{\rho}))[$
%is a necessary
%condition for $\lambda>0$.  Indeed, 
%Indeed, by setting $\xi = \zeta_{\bm{M}}\tau$, 
We have $2-\theta-2\eta\zeta+\frac{2\hat{\rho}}{\tau}
            \left( \left(\zeta+\xi\right)^2+4\tau\zeta\right)\geq \lambda>0$, hence $\tau (2-\theta-2\eta\zeta+8\hat{\rho}\zeta) >-2\hat{\rho}(\zeta+\xi)^2$, which implies that 
            %$2-\theta-2(1+\nu-4\hat{\rho})\zeta> 0$.
            $c>0$.
            This is equivalent to
            $\zeta \in [0,(2-\theta)/(2(\eta-4\hat{\rho}))[$. 
            %If $\theta\in [1,2]$ and $(\bm{S}-\tau \bm{M})$ is  monotone, then $\zeta \in [0,(2-\theta)/(2(1-4\hat{\rho}))[$. Otherwise, $\zeta \in [0,(2-\theta)/(2(1-4\hat{\rho}+|1-\theta|))[$. 
            %This constitutes a subinterval of the one initially considered in Assumption~\ref{assum:2}\ref{assum:24}.
\item     
Assume that $c>0$.
We have $a_0\geq 0$, %\textcolor{blue}{if $c \geq 0$,}
%by Assumption~\ref{assum:2}\ref{assum:24}, 
$a_2 >0$, and $a_1 \leq 0$.
% In addition,\textcolor{blue}{if $c \geq 0$,} $a_1\le 0$ since
 %\begin{align}
 %a_1 
 %&= 2(1-2\zeta+8\hat{\rho}\zeta)\hat{\rho}\zeta+4\hat{\rho}(\beta+\hat{\rho})(\zeta+\xi)^2-4\hat{\rho}^2\nonumber\\
 %&= 2(1-2\zeta)\hat{\rho}\zeta
 %+4\hat{\rho}(\beta+\hat{\rho})
%(\zeta+\xi)^2+4\hat{\rho}^2(4\zeta^2-1).    
 %\end{align}
     Consequently, for every $\tau\in]-\infty,0[$, $P(\tau)> 0$, which means that all the real roots of $P$ are nonnegative. 
  Note also that $P(\underline{\tau}_\zeta)\le  0$.
    Indeed, 
    $\underline{\tau}_\zeta= -\hat{\rho}\zeta/(\beta+\hat{\rho})$ $\Rightarrow$ 
    $P(\underline{\tau}_\zeta) = -\underline{\tau}_\zeta(\underline{\tau}_\zeta+2\hat{\rho})^2 \le  0$.
     Since  $P(0)=a_0\geq0$ and $P(\underline\tau_\zeta)\leq0$,     
%     $\lim_{\tau \to -\infty} P(\tau) = +\infty$, 
$P$ has a root $\tau_{0,\zeta} \in [0, \underline{\tau}_\zeta]$. 
  Two cases may arise: either \textsc{I}) $P$ is nonpositive on
$]\underline{\tau}_\zeta,+\infty[$ (in particular, if it is negative over $]\underline{\tau}_\zeta,+\infty[$, then $P$ has all its real roots in $[0,\underline{\tau}_\zeta]$) or \textsc{II}) there exists $\tau \in ]\underline{\tau}_\zeta,+\infty[$ such that $P(\tau) > 0$ and $P$ has two roots $\tau_{1,\zeta} \geq \underline{\tau}_{\zeta}$ and $\tau_{2,\zeta} > \tau_{1,\zeta}$. 
  In the latter case, for every $\tau \in ]\tau_{1,\zeta},\tau_{2,\zeta}[$, $P(\tau) > 0$.

%  Since $\hat{\rho}\big((\beta+\hat{\rho})\tau+\hat{\rho}\zeta)\le 0$,
%and $1+2\zeta\geq\zeta+\xi$,
We have, for every $\tau\in\R$,
% \begin{align}
% P(\tau)&= 
% -\tau^3+a_2\tau^2+
%  2\big((1-2\zeta+8\hat{\rho}\zeta)\hat{\rho}\zeta-2\hat{\rho}^2\big)\tau
%  +4\hat{\rho}(\zeta+\xi)^2\big((\beta+\hat{\rho})\tau+\hat{\rho}\zeta)\nonumber\\
%  &\geq 
%  -\tau^3+a_2\tau^2+
%  2\big((1-2\zeta+8\hat{\rho}\zeta)\hat{\rho}\zeta-2\hat{\rho}^2\big)\tau
%  +4\hat{\rho}(1+2\zeta)^2\big((\beta+\hat{\rho})\tau+\hat{\rho}\zeta).
% \label{eq:deltaP}
%  \end{align}
 \begin{align}
P(\tau)&= 
-\tau^3+a_2\tau^2+
2\hat{\rho}\big(c\zeta-2\hat{\rho}\big)\tau
 +4\hat{\rho}(\zeta+\xi)^2\big((\beta+\hat{\rho})\tau+\hat{\rho}\zeta).\label{e:defP2}
 %\\
 %&\geq 
 %-\tau^3+a_2\tau^2+
 %2\hat{\rho}\big(c\zeta-2\hat{\rho}\big)\tau
 %+4\hat{\rho}(1+2\zeta)^2\big((\beta+\hat{\rho})\tau+\hat{\rho}\zeta).
%\label{eq:deltaP}
 \end{align}
Let
\begin{equation}
    \widetilde{\tau}_\zeta =
    a_2/2 =c(\beta+\hat{\rho})-2\hat{\rho}.
    %=(1-2\zeta+8\hat{\rho}\zeta)(\beta+\hat{\rho})-2\hat{\rho}.
\end{equation}
%Note that, as $\zeta \to 0$, $\underline{\tau}_{\zeta}
%\to 0$\textcolor{blue}{, $c \to 2-\theta$,} and $\widetilde{\tau}_\zeta \to \textcolor{blue}{(2-\theta)\beta-\theta\hat{\rho}>0}$ , thus, for $\zeta$ small enough we have $\widetilde{\tau}_\zeta> \underline{\tau}_\zeta$. 
Now, from \eqref{e:defP2},
\begin{align}    P(\widetilde{\tau}_\zeta)&= 
\widetilde{\tau}_\zeta^3+
 2\hat{\rho}\big(c\zeta-2\hat{\rho}\big)\widetilde{\tau}_\zeta
 +4\hat{\rho}(\zeta+\xi)^2\big((\beta+\hat{\rho})\widetilde{\tau}_\zeta+\hat{\rho}\zeta)\nonumber\\&= 
\widetilde{\tau}_\zeta(\widetilde{\tau}_\zeta^2+
 2\hat{\rho}\big(c\zeta-2\hat{\rho}))
 +4\hat{\rho}(\zeta+\xi)^2(\beta+\hat{\rho})\big(\widetilde{\tau}_\zeta-\underline{\tau}_\zeta)\nonumber\\
 &= 
\widetilde{\tau}_\zeta c (\beta+\hat{\rho})(\widetilde{\tau}_\zeta-2\hat{\rho}-2\underline{\tau}_\zeta)
 +4\hat{\rho}(\zeta+\xi)^2(\beta+\hat{\rho})\big(\widetilde{\tau}_\zeta-\underline{\tau}_\zeta)\nonumber\\
 &=(\beta+\hat{\rho})\big(\widetilde{\tau}_\zeta c (\widetilde{\tau}_\zeta-2\hat{\rho}-2\underline{\tau}_\zeta)
 +4\hat{\rho}(\zeta+\xi)^2\big(\widetilde{\tau}_\zeta-\underline{\tau}_\zeta)\big).\label{eq:Ptildetau}\\
 %&= \textcolor{blue}{(\beta+\hat{\rho})\big(\widetilde{\tau}_\zeta c (-2\hat{\rho}-\underline{\tau}_\zeta)+
 %\big( \widetilde{\tau}_\zeta c +4\hat{\rho}(\zeta+\xi)^2\big)(\widetilde{\tau}_\zeta-\underline{\tau}_\zeta)\big)}
 %\label{e:Ptaut}
\end{align}
% \begin{align}
%     P(\widetilde{\tau}_\zeta) &= (1-2\zeta+8\hat{\rho}\zeta)^2(\beta+\hat{\rho})( \widetilde{\tau}_\zeta (\beta+\hat{\rho}) + 2\zeta \hat{\rho} ) +8\hat{\rho}\zeta(3+2(\zeta-2\hat{\rho})) ( \widetilde{\tau}_\zeta (\beta+\hat{\rho}) + \zeta \hat{\rho} )\nonumber\\
%     &= (1-2\zeta+8\hat{\rho}\zeta)^2(\beta+\hat{\rho})^2( \widetilde{\tau}_\zeta - 2\underline{\tau}_\zeta ) +8\hat{\rho}\zeta(3+2(\zeta-2\hat{\rho})) (\beta+\hat{\rho}) ( \widetilde{\tau}_\zeta - \underline{\tau}_\zeta)%\label{e:Ptz1}
%     \nonumber\\
%     &= (\beta+\hat{\rho})^2\big((1-2\zeta+8\hat{\rho}\zeta)^2( \widetilde{\tau}_\zeta - 2\underline{\tau}_\zeta ) -8\underline{\tau}_\zeta(3+2(\zeta-2\hat{\rho}))( \widetilde{\tau}_\zeta - \underline{\tau}_\zeta)\big).\label{e:Ptz2}
% \end{align}
Suppose
that 
\begin{equation}
\label{e:tautlesstauprho}
\widetilde{\tau}_\zeta >  2(\underline{\tau}_\zeta+\hat{\rho}).
\end{equation}
%The last additive term in the above expression is negative when $\widetilde{\tau}_\zeta - \underline{\tau}_\zeta>0$, thus, to allow $P(\widetilde{\tau}_\zeta)$ to be positive, we need the first term to be positive. This occurs when  $\widetilde{\tau}_\zeta >  2(\underline{\tau}_\zeta+\textcolor{blue}{\hat{\rho}})$.
From the expressions of $\widetilde{\tau}_\zeta$
and $\underline{\tau}_\zeta$, the latter inequality is satisfied if and only if 
\begin{equation}\label{eq:deszeta1}
    \zeta  < \zeta_1=\frac{(\beta+\hat{\rho})((2-\theta)(\beta+\hat{\rho})-4\hat{\rho})}
    {2(\eta-4\hat{\rho})(\beta+\hat{\rho})^2-2\hat{\rho}},
\end{equation}
where the upper bound is positive.
Note that $\widetilde{\tau}_\zeta > \underline{\tau}_\zeta$.
Indeed, if $-2\hat{\rho} \geq \underline{\tau}_\zeta $, then $\widetilde{\tau}_\zeta = c(\beta+\hat{\rho})-2\hat{\rho} >  \underline{\tau}_\zeta $. %and $\widetilde{\tau}_\zeta >  2(\underline{\tau}_\zeta+\hat{\rho})$. 
If $-2\hat{\rho} < \underline{\tau}_\zeta$, then, the condition $\widetilde{\tau}_\zeta >  2(\underline{\tau}_\zeta+\hat{\rho})$ implies that $\widetilde{\tau}_\zeta > \underline{\tau}_\zeta$.
Due to this inequality, the last additive term in \eqref{eq:Ptildetau} is nonpositive. Thus, \eqref{e:tautlesstauprho} is a necessary condition for $P(\widetilde{\tau}_\zeta)$ to be positive.

%which also holds for sufficiently small $\zeta$. 

Let us now look at 
what happens when $\zeta$
is small enough. 
Since $\xi\in [1-\zeta,1+\zeta]$, $\xi \to 1$
as $\zeta\to 0$.
In addition,
as $\zeta \to 0$, $\underline{\tau}_{\zeta}
\to 0$, $c \to 2-\theta$, $\widetilde{\tau}_\zeta \to (2-\theta)\beta-\theta\hat{\rho}>0$. Thus, we have
\begin{align*}
%{\underline{\tau}}_{\zeta}
%\to 0, \quad \widetilde{\tau}_\zeta \to \beta-\hat{\rho}, \quad\text{and} \quad
%(1-2\zeta+8\hat{\rho}\zeta)^2(\beta+\hat{\rho})^2( \widetilde{\tau}_\zeta - 2\underline{\tau} ) \to 
%(\beta-\hat{\rho})(\beta+\hat{\rho})^2 > 0,
\widetilde{\tau}_\zeta c (\widetilde{\tau}_\zeta-2\hat{\rho}-2\underline{\tau}_\zeta)
 +4\hat{\rho}(\zeta+\xi)^2\big(\widetilde{\tau}_\zeta-\underline{\tau}_\zeta)\to 
&((2-\theta)\beta-\theta\hat{\rho})((2-\theta)^2\beta + \theta^2 \hat{\rho}).
\end{align*}
The limit
is strictly positive when $\theta \in \Big]0 ,\frac{2}{1+\sqrt{{-\hat{\rho}/\beta }} }\Big[$ (note that $\frac{2}{1+\sqrt{{-\hat{\rho}/\beta }} } \in ]1,2]$). 
%whereas %\linebreak
%$8\hat{\rho}\zeta(3+2(\zeta-2\hat{\rho})) (\beta+\hat{\rho}) ( \widetilde{\tau}_\zeta - \underline{\tau})\sim \zeta \hat{\rho}(\beta+\hat{\rho})$.
Thus, it follows from \eqref{eq:Ptildetau}  that
there exists $\overline{\zeta} \in \RPP$,
independent of $\xi$, such that, for every $\zeta < \overline{\zeta}$, 
$P(\widetilde{\tau}_\zeta) > 0$, which ensures that
we are in case II) mentioned above. In addition to the condition $\overline{\zeta}<\zeta_1$, it follows from the initial assumption $c>0$ and \ref{prop:overlinezetaii} that $\overline{\zeta} \in [0,(2-\theta)/(2(\eta-4\hat{\rho}))[$.
%and, from the necessary condition $\widetilde{\tau}_\zeta >  2(\underline{\tau}_\zeta+\hat{\rho})$,  that
%\begin{equation}\label{eq:deszeta1}
%    \overline{\zeta}  < \zeta_1=\frac{(\beta+\hat{\rho})((2-\theta)(\beta+\hat{\rho})-4\hat{\rho})}
%    {2(1+\nu-4\hat{\rho})(\beta+\hat{\rho})^2-2\hat{\rho}}.
%\end{equation}
%Note that the last inequality also implies that $\widetilde{\tau}_\zeta >  \underline{\tau}_\zeta $.
%\begin{itemize}

\end{enumerate}
    \end{proof}

Let us analyze more precisely some noteworthy special cases.
\begin{cor}\label{coro:overlinezeta}
In the context of Problem~\ref{pro:1},
let $\eta$ be defined by \eqref{def:nu}, let $\xi$ be defined as Remark~\ref{re:firstrem}\ref{re:firstremi}, 
		let $(\x_0,\bu_0)\in \bm{\H}^2$ and consider the sequence $(\x_n,\bu_n)_{n\geq 1}$ generated by Algorithm~\ref{algo:NLFBMM}. 
        %Under the same assumptions as in Theorem \ref{teo:NLFBMM},
        Consider one of the following settings:
\begin{enumerate}        
        \item \label{coro:overlinezetai} (standard forward-backward algorithm)  $\tau \bm{M}=\bm{S}$ (i.e., $\zeta = 0$), %$\overline{\theta}=2(\beta+\hat{\rho})^{-1}\sqrt{\beta} (\sqrt{\beta}-\sqrt{-\hat{\rho}})$,   
        %$\tau_{1,0} = -2\hat{\rho}$,
        %and $\tau_{2,0}=2\beta$.
        \begin{align}
        \overline{\theta} &= \frac{2} {1+\sqrt{-\hat{\rho}/\beta}},\\
        %{\beta+\hat{\rho}}\\
            \tau_{1,0} &= (2-\theta)(\beta+\hat{\rho})-2\hat{\rho}
            -\sqrt{((2-\theta)(\beta+\hat{\rho})-2\hat{\rho})^2+4\hat{\rho}\beta},\\
            \tau_{2,0} &= (2-\theta)(\beta+\hat{\rho})-2\hat{\rho}
            +\sqrt{((2-\theta)(\beta+\hat{\rho})-2\hat{\rho})^2+4\hat{\rho}\beta};
        \end{align}
        \item (comonotone inclusion)  $\bm{C}=\bm{0}$, 
        $\overline{\theta}=2$, $\zeta < \overline{\zeta} = \frac{2-\theta}{2(\eta-4\hat{\rho})}$, 
        %\in ]0,\overline{\zeta}[$,
        $\tau_{1,\zeta}=-2\hat{\rho}(\zeta+\xi)^2/(2-\theta-2(\eta-4\hat{\rho})\zeta)$ and
        $\tau_{2,\zeta} =+\infty$
        ;
        \item \label{coro:overlinezetaiii}  (monotone+cocoercive operator splitting) $\bm{A}$ is monotone,  $\overline{\theta}=2$,
       $\zeta < \overline{\zeta} = \frac{2-\theta}{2\eta}$, 
       %for every $\zeta \in ]0,\overline{\zeta}[$,
        $\tau_{1,\zeta} =0$, and $\tau_{2,\zeta}=2\beta
        (2-\theta-2\eta\zeta)$.        
%        (1-2\zeta)$.

%    \end{itemize}
    \end{enumerate}
    Then, if $\theta\in ]0,\overline{\theta}[$ and
    $\tau \in ]\tau_{1,\zeta},\tau_{2,\zeta}[$, the same conclusions as in Theorem \ref{teo:NLFBMM} hold. In particular, the R-linear convergence property holds whenever the additional assumption in Theorem~\ref{teo:NLFBMM}\ref{teo:NLFBMM2} is satisfied.
    \end{cor}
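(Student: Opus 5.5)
The plan is to reduce everything to Theorem~\ref{teo:NLFBMM}. Its only hypothesis beyond Problem~\ref{pro:1} is $\lambda>0$. By Proposition~\ref{prop:overlinezeta}\ref{prop:overlinezetai}, this is equivalent to $P(\tau)>0$ for the cubic $P$ given in \eqref{e:defP}. So in each of the three settings I will specialize the coefficients $a_0,a_1,a_2$ (equivalently, the factored form \eqref{e:defP}) and locate the interval where $P>0$. Throughout I keep the factorization
\[
P(\tau)=2\big(\tau c+2\hat\rho(\zeta+\xi)^2\big)\big((\beta+\hat\rho)\tau+\hat\rho\zeta\big)-\tau(\tau+2\hat\rho)^2,
\]
with $c=2-\theta-2(\eta-4\hat\rho)\zeta$, and I must also check $\tau>\underline\tau_\zeta$ so that Assumption~\ref{assum:2}\ref{assum:24} is satisfied.

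\emph{Setting (i).} Here $\zeta=0$, so $\xi=1$, $c=2-\theta$ and $\underline\tau_0=0$. Then $P(\tau)=\tau\big(2(2-\theta)(\beta+\hat\rho)\tau+4\hat\rho(\beta+\hat\rho)-(\tau+2\hat\rho)^2\big)$. Hence for $\tau>0$ the condition reads $-\tau^2+2\big((2-\theta)(\beta+\hat\rho)-2\hat\rho\big)\tau+4\hat\rho\beta>0$, since $4\hat\rho(\beta+\hat\rho)-4\hat\rho^2=4\hat\rho\beta$. Its roots are exactly $\tau_{1,0},\tau_{2,0}$ as stated. It remains to check that these roots are real and that $\tau_{1,0}\ge0$:
\begin{itemize}
\item $\tau_{1,0}\ge0$ follows from $4\hat\rho\beta\le0$.
\item Real distinct roots require $\big((2-\theta)(\beta+\hat\rho)-2\hat\rho\big)^2>-4\hat\rho\beta$, i.e.\ $(2-\theta)(\beta+\hat\rho)-2\hat\rho>2\sqrt{-\hat\rho\beta}$ (the left side being positive). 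Writing $\beta+\hat\rho=(\sqrt\beta-\sqrt{-\hat\rho})(\sqrt\beta+\sqrt{-\hat\rho})$, this rearranges to $(2-\theta)(\sqrt\beta+\sqrt{-\hat\rho})>2\sqrt{\beta}$, which is $\theta<\overline\theta$.
\end{itemize}
This algebraic rearrangement is where I expect the only real fiddliness.

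\emph{Setting (ii).} With $\bm C=\bm 0$ the operator is $\beta$-cocoercive for every $\beta>0$, so I let $\beta\to+\infty$. Dividing $P$ by $\beta$ and letting $\beta\to+\infty$, the condition $\lambda>0$ becomes, directly from \eqref{e:defmutn},
\[
2-\theta-2\eta\zeta+\tfrac{2\hat\rho}{\tau}\big((\zeta+\xi)^2+4\tau\zeta\big)>0,
\]
since the term $(\tau+2\hat\rho)^2/(2\tau(\beta+\cdots))$ vanishes. This is linear: $\tau c>-2\hat\rho(\zeta+\xi)^2$. It holds for $\tau>\tau_{1,\zeta}$ once $c>0$, i.e.\ $\zeta<\overline\zeta$. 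Also $\underline\tau_\zeta\to0$, so there is no upper bound and $\tau_{2,\zeta}=+\infty$. To avoid the limiting argument, I would rather pick $\beta$ large depending on $\tau$, using that $\lambda$ is monotone in $\beta$.

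\emph{Setting (iii).} Here $\bm A$ is monotone, so $\rho\ge0$ and $\hat\rho=0$. Then $\underline\tau_\zeta=0$ and $P(\tau)=\tau^2\big(2c\beta-\tau\big)$ with $c=2-\theta-2\eta\zeta$. This is positive exactly on $]0,2\beta c[$, which is nonempty iff $\zeta<(2-\theta)/(2\eta)$. Any $\theta<2$ is allowed.

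In each setting, Theorem~\ref{teo:NLFBMM} then yields both the weak and the R-linear conclusions.
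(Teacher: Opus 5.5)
Your proof follows the paper's own route. You reduce to $\lambda>0$ and pass to $P(\tau)>0$ via Proposition~\ref{prop:overlinezeta}\ref{prop:overlinezetai}. You then specialize the cubic in each setting: $P(\tau)=\tau(-\tau^2+2B\tau+4\hat\rho\beta)$ in (i), the limit $\beta\to+\infty$ in (ii), and $P(\tau)=\tau^2(2c\beta-\tau)$ in (iii). In (ii) you fix $\tau$ first and then pick a large enough cocoercivity constant. This is legitimate, because $\bm C=\bm 0$ is $\beta'$-cocoercive for every $\beta'$, and both $\rho>-\beta'$ and $\tau>\underline{\tau}_\zeta$ only get easier as $\beta'$ grows. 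It is a cleaner justification of the paper's informal ``let $\beta\to+\infty$''.

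The rearrangement in setting (i) is wrong as written. The inequality $(2-\theta)(\sqrt\beta+\sqrt{-\hat\rho})>2\sqrt\beta$ is not equivalent to $\theta<\overline\theta$: for $\hat\rho=0$ it would force $\theta<0$. Set $s=\sqrt\beta$ and $r=\sqrt{-\hat\rho}$, and note $s>r$ because $\beta+\hat\rho>0$. The discriminant condition $(2-\theta)(s^2-r^2)+2r^2>2rs$ becomes $(2-\theta)(s-r)(s+r)>2r(s-r)$. Dividing by $s-r>0$ gives $(2-\theta)(s+r)>2r$, which is exactly $\theta<2s/(s+r)=\overline\theta$. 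With that correction your argument is complete.
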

\begin{proof}
The result follows from Theorem~\ref{teo:NLFBMM} and Proposition~\ref{prop:overlinezeta}\ref{prop:overlinezetai}. Let us use the same notation as 
in Proposition
\ref{prop:overlinezeta}.
\begin{enumerate}
        \item Assume that $\tau \bm{M}=\bm{S}$. Then
            $\zeta = 0$, $\zeta_{\bm{M}}=1/\tau$, $\xi = 1$, and
            \begin{align}
                P(\tau) &= \tau(-\tau^2+2((2-\theta)(\beta+\hat{\rho})-2\hat{\rho})\tau +4\hat{\rho}\beta ).
                %\nonumber\\
                %& = \tau (4\hat{\rho}\beta+4\beta\tau-\tau^2-2\theta\tau(\beta+\hat{\rho}) ).
            \end{align}
The quadratic factor has two distinct nonnegative roots $\tau_{1,0}$ and $\tau_{2,0}$
when $(2-\theta)(\beta+\hat{\rho})-2\hat{\rho}>2\sqrt{-\hat{\rho}\beta}$, that is $\theta < \overline{\theta}$.
Then $P(\tau)$ is positive if $\tau \in ]\tau_{1,0},\tau_{2,0}[$.
\item Since the zero operator is $\beta$-cocoercive for every $\beta>0$, we may let $\beta\to+\infty$ in the expression of $\lambda$. Thus we deduce from \eqref{e:defmutn} that
$\lambda>0$ if and only if
\begin{equation}
\tau c+2\hat{\rho}(\zeta+\xi)^2
%(1-2\zeta)\tau+ 2\hat{\rho} 
%\left((\zeta+\xi)^2+4\tau%\zeta\right)
> 0,
\end{equation}
which
%, under the assumption $\zeta < 1/(2(1-4\hat{\rho}))$, 
yields 
the intervals for $\zeta$ and $\tau$ ensuring convergence. 
\item 
In the case when $\bm{A}$ is monotone, that is $\hat{\rho}=0$, 
we have $P(\tau) = 
\tau^2(2c\beta-\tau)$, 
%\tau^2\left(
%2(1-2\zeta)\beta-\tau\right)$
%we have that $\lambda=1-2\zeta - \frac{\tau}{2\beta}$ and it 
which yields the result.
\end{enumerate}
%\end{itemize}
\end{proof}
    
\begin{rem}\label{rem:lambda}\ 
\begin{enumerate}
\item\label{rem:lambda0} The necessary condition for $\lambda > 0$ in Proposition~\ref{prop:overlinezeta}\ref{prop:overlinezetaii}
defines a subinterval of the one initially considered in Assumption~\ref{assum:2}\ref{assum:24}.
\item\label{rem:lambda1} Suppose that $\bm{S}=\Id$, $\tau \bm{M}=\Id$, $\bu_0=\bm{0}$, and $\theta=1$.
Then Algorithm~\ref{algo:NLFBMM} reduces to the standard unrelaxed FB algorithm. From our standing assumptions, $\tau \bm{A}$ is $(-\hat{\rho}/\tau)$-cohypomonotone on $\mathbb{S}$.
Let us assume that the cohypomonotonicity property extends to $\bm{\H}$ to provide a simple  
interpretations of the conditions in Corollary~\ref{coro:overlinezeta}\ref{coro:overlinezetai}.
The condition $\tau > \tau_{1,0}=-2\hat{\rho}$ guarantees that $J_{\tau \bm{A}}$ is an averaged operator \cite[Corollary~3.10(iii)]{BauschkeMoursiXianfu2021}, whereas $\tau < \tau_{2,0} = 2\beta$ is the classical condition ensuring the averagedness of 
$\Id-\tau \bm{C}$.
\item\label{rem:lambda2} In Corollary~\ref{coro:overlinezeta}\ref{coro:overlinezetaiii}, 
the obtained condition coincides with the assumption made in \cite[Theorem~3.1]{MorinBanertGiselsson2022}.
\item\label{rem:lambda3} It follows from \eqref{e:overlinezeta} that, for fixed $\hat{\rho}<0$, when $\beta\to-\hat{\rho}$, then $\overline{\zeta} \to 0$. 
\item Let $\lambda=\lambda(\tau,\theta,\beta,\hat{\rho},\zeta)$ be defined in \eqref{e:defmutn}. The supremum value of 
$\zeta>0$ for which there exists $\tau \in ]\underline{\tau}_\zeta(\beta,\hat{\rho}),+\infty[$ such  that $\lambda(\tau,\theta,\beta,\hat{\rho},\zeta) > 0$ depends on $\beta$, $\hat{\rho}$, and $\theta$.
With a slight abuse of notation, this supremum will be denoted by $\overline{\zeta}(\beta,\hat{\rho},\theta)$.
%parameter $\overline{\zeta}$ in Proposition~\ref{prop:overlinezeta} depends on $\beta$ and $\hat{\rho}$. 
This value is nondecreasing with respect to $\beta$, 
that is, if $\beta_2>\beta_1$, then $\overline{\zeta}(\beta_2,\hat{\rho},\theta)\geq\overline{\zeta}(\beta_1,\hat{\rho},\theta)$. Indeed, if $\zeta>0$ and $\beta_2>\beta_1$, then
$\underline{\tau}_\zeta(\beta_2,\hat{\rho})< \underline{\tau}_\zeta(\beta_1,\hat{\rho})$
and $\lambda(\tau,\theta,\beta_2,\hat{\rho},\zeta)\geq\lambda(\tau,\theta,\beta_1,\hat{\rho},\zeta)$.
Hence, every $\zeta$ feasible for $\beta_1$ is feasible for $\beta_2$. 
%$\lambda$ defined in \eqref{e:defmutn} is nondecreasing with respect to $\beta$. Now, let $\hat{\rho} \in ]-\infty,0[$, let $\beta_1 \in ]-\hat{\rho},\infty[$, and suppose by contradiction that there exists $\beta_2>\beta_1$ such that $\overline{\zeta}(\beta_2,\hat{\rho},\theta)<\overline{\zeta}(\beta_1,\hat{\rho},\theta)$. This implies there exists $\widetilde{\zeta} \in ]\overline{\zeta}(\beta_2,\hat{\rho},\theta),\overline{\zeta}(\beta_1,\hat{\rho},\theta)[$ such that, for every $\tau \geq -\frac{\hat{\rho}\widetilde\zeta}{\beta_2+\hat\rho}$, $\lambda(\tau,\theta,\beta_2,\hat{\rho},\widetilde{\zeta})\leq 0$. Since $\widetilde{\zeta}<\overline{\zeta}(\beta_1,\hat{\rho},\theta)$, there exists $\widetilde{\tau} > -\frac{\hat{\rho}\widetilde\zeta}{\beta_1+\hat\rho}>-\frac{\hat{\rho}\widetilde\zeta}{\beta_2+\hat\rho}$ such that $\lambda(\widetilde\tau,\theta,\beta_1,\hat{\rho},\widetilde{\zeta})>0$. However, since $\lambda$ is an increasing function of $\beta$, we have $\lambda(\widetilde\tau,\theta,\beta_2,\hat{\rho},\widetilde{\zeta})>\lambda(\widetilde\tau,\theta,\beta_1,\hat{\rho},\widetilde{\zeta})>0$, which leads to a contradiction.

Analogously, it can be proved that $\overline{\zeta}(\beta,\hat{\rho},\theta)$ is a nondecreasing function of $\hat{\rho}$ since $\lambda$ is increasing with respect to $\hat{\rho}$. Indeed, %$\frac{\partial \lambda}{\partial \hat{\rho}}$ is given by
\begin{align*}
    \frac{\partial \lambda}{\partial \hat{\rho}} &= \frac{2}{\tau}  \big((\zeta+\tau\zeta_M)^2+4\tau\zeta\big)-\frac{(\tau+2\hat{\rho})(4\beta\tau+ (2\hat{\rho}-\tau)(\tau+\zeta))}{2(\beta \tau +\hat{\rho}(\tau+\zeta))^2}.
\end{align*}
Moreover, since $\tau\zeta_M \geq 1-\zeta$,
\begin{align*}
    \frac{\partial \lambda}{\partial \hat{\rho}}\Bigg\vert_
   {\hat{\rho} =0}&= \frac{2}{\tau}  \big((\zeta+\tau\zeta_M)^2+4\tau\zeta\big)-\frac{4\beta-\tau-\zeta}{2\beta^2} \\
   &\geq\frac{2}{\tau}  \big(1+4\tau\zeta\big)-\frac{4\beta-\tau-\zeta}{2\beta^2}  
   \\
  &= \frac{(2\beta-\tau)^2+\tau\zeta(16\beta^2+1)}{2\beta^2\tau} >0.
\end{align*}
Besides, \begin{equation*}
    \frac{\partial^2 \lambda}{\partial \hat{\rho}^2} = - \frac{\tau^2 (2\beta - \tau - \zeta)^2}{\left( \beta\tau + \hat{\rho}(\tau + \zeta) \right)^3}\leq 0.
\end{equation*}
Then, $ \frac{\partial \lambda}{\partial \hat{\rho}}$ is nonincreasing as a function of $\hat{\rho}$ and it is positive at $\hat{\rho} =0$. We conclude that $ \frac{\partial \lambda}{\partial \hat{\rho}}$  is positive on the admissible domain $]-\beta\tau/(\tau+\zeta), 0]$ and that $\lambda$ is increasing with respect to $\hat{\rho}$ on this domain. In addition,  $\underline{\tau}(\beta,\hat{\rho})$ decreases as a function of $\hat{\rho}$.

On the other hand, for $\theta \neq 1$, we have 
\begin{equation*}
    \frac{\partial \lambda}{\partial \theta} = \begin{cases}
		-1, &\textnormal{ if } \theta \in [1,2[ \textnormal{ and } (\bm{S}-\tau \bm{M}) \textnormal{ is monotone,}\\
		-1 - 2\textnormal{sgn}(\theta-1)\zeta, &\textnormal{ otherwise.}
		\end{cases}
\end{equation*}
Since $\zeta \in ]0,1/2[$, it follows that $\frac{\partial \lambda}{\partial \theta} < 0$ in all cases. Since $\lambda$ is continuous at $\theta=1$, $\lambda$ is strictly decreasing with respect to $\theta$
and $\overline{\zeta}(\beta, \hat{\rho}, \theta)$ is nonincreasing with respect to $\theta$. Hence, smaller values of $\theta$ lead to less restrictive conditions %on $\beta$ and $\hat{\rho}$ to ensure that $\lambda > 0$ for some $\tau > 0$
for the existence of an admissible step size $\tau$. 
%In particular, due to the continuity of $\lambda$ with respect to $\theta$, if $\lambda|_{\theta = 0} > 0$, it is always possible to choose a sufficiently small $\theta \in ]0, 1[$ such that $\lambda > 0$. Letting $\theta \to 0$, we have $\nu \to 1$ and the bound for $\overline{\zeta}(\beta, \hat{\rho}, \theta)$ in \eqref{e:overlinezeta} reduces to
%\begin{equation}\label{e:overlinezetat=0}
 %       \overline{\zeta} \in \left]0, \min\left\lbrace \frac{1}{2(1-2\hat{\rho})}, \frac{(\beta^2-\hat{\rho}^2)}
%    {2(1-2\hat{\rho})(\beta+\hat{\rho})^2-\hat{\rho}}
    %    \frac{\beta^2-\hat{\rho}^2}
    %{2(1-4\hat{\rho})(\beta+\hat{\rho})^2-2\hat{\rho}}
%    \right\rbrace
%    \right[
%    \end{equation}
Figure~\ref{fig:z_vs_rho_and_zeta} shows examples of $\overline{\zeta}(\beta,\hat{\rho},\theta)$ as a function of $\hat{\rho}$ for selected values of $\beta$ and $\theta$.
\end{enumerate}
\end{rem}
\begin{figure}[htbp]
    \centering
    % Fila 1
    \begin{subfigure}[b]{0.4\textwidth}
        \centering
        \includegraphics[width=\linewidth]{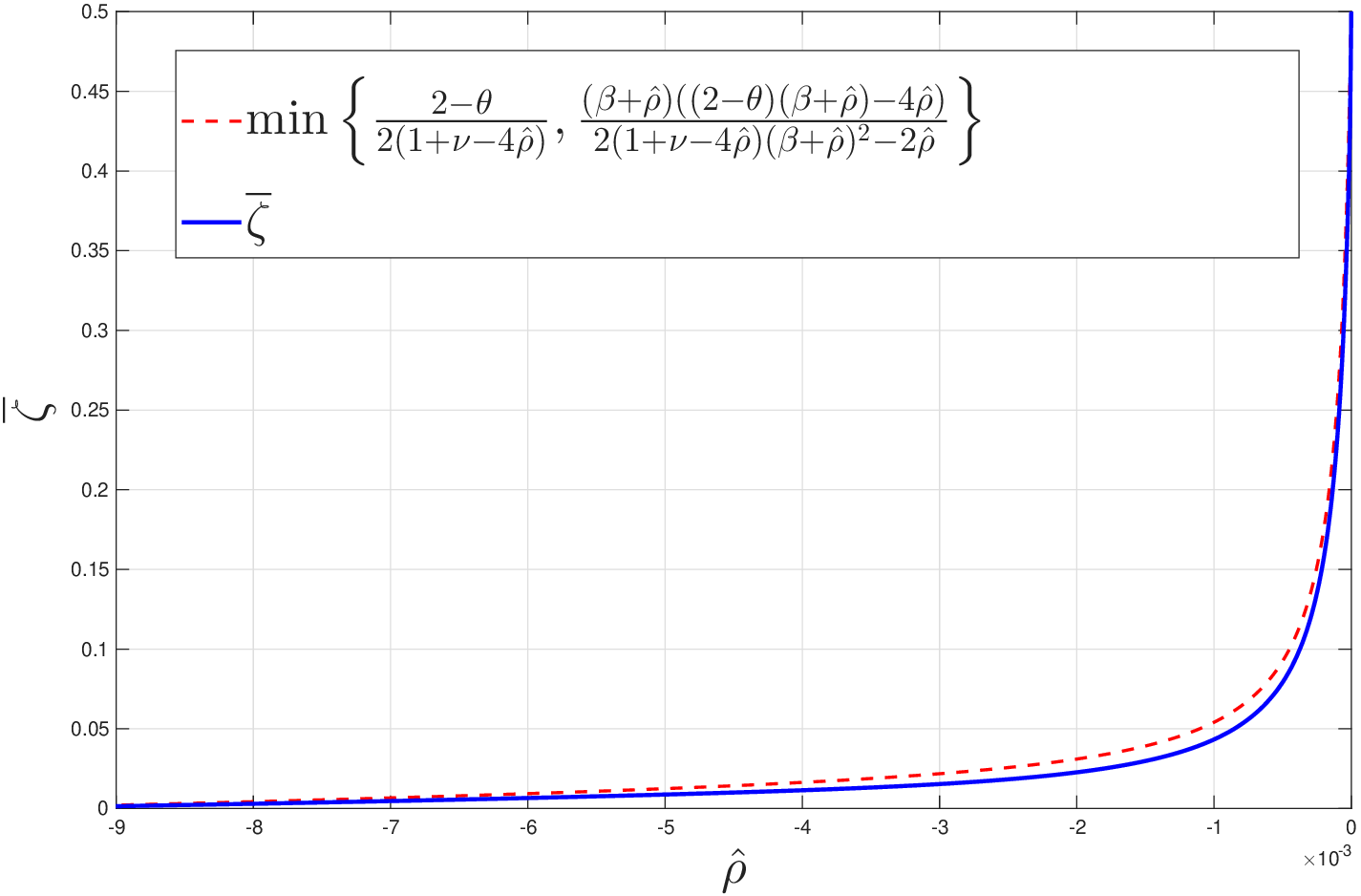}
        \caption{\scriptsize $\beta = 0.01$ and $\theta = 1$}
        \label{fig:beta_001}
    \end{subfigure}
    \hfill
    \begin{subfigure}[b]{0.4\textwidth}
        \centering
        \includegraphics[width=\linewidth]{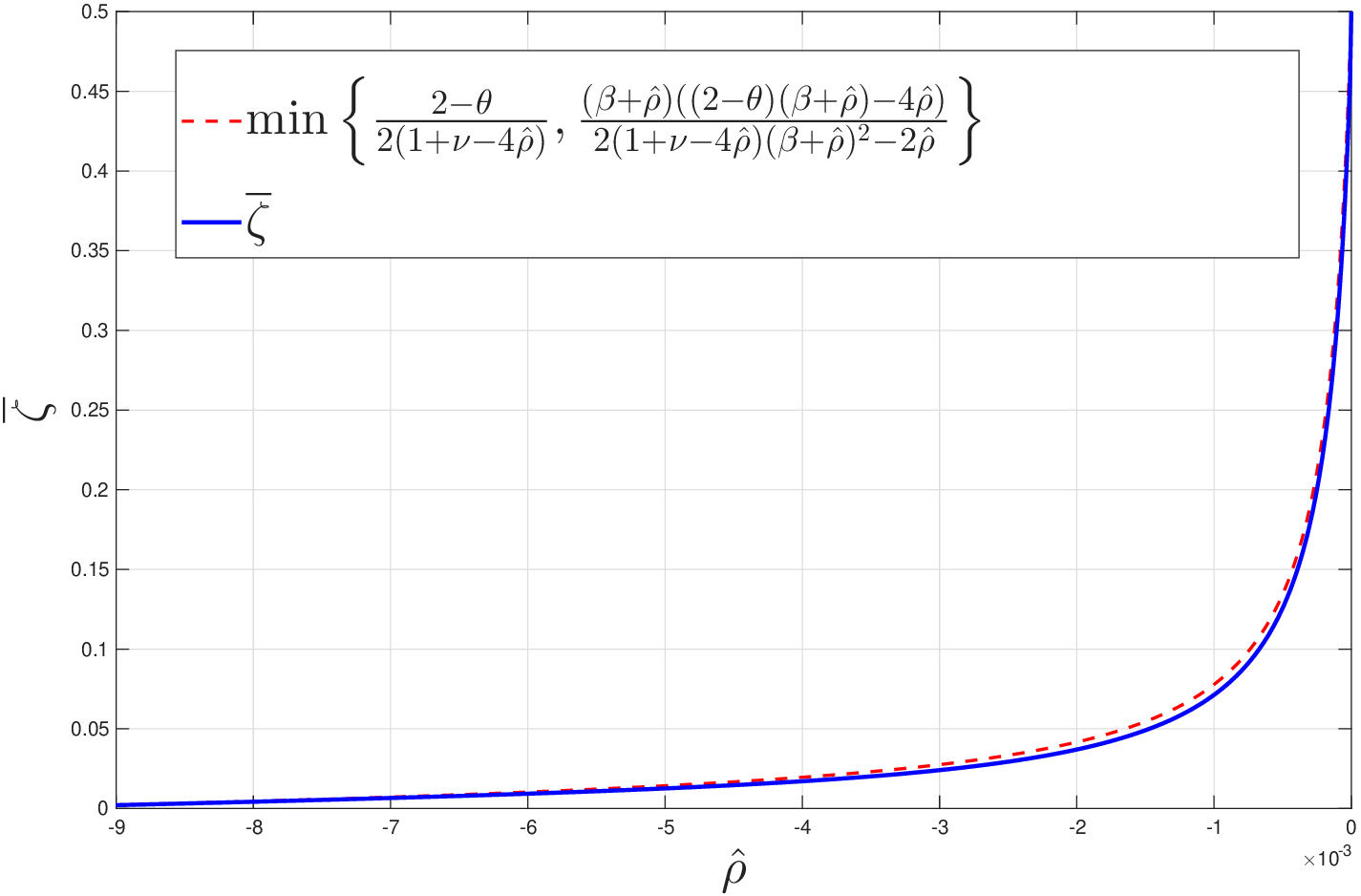}
        \caption{\scriptsize $\beta = 0.01$ and $\theta = 1/4$}
        \label{fig:beta_001_t}
    \end{subfigure}
    %\vspace{10pt} % Espacio vertical entre filas

    % Fila 2
    \begin{subfigure}[b]{0.4\textwidth}
        \centering
        \includegraphics[width=\linewidth]{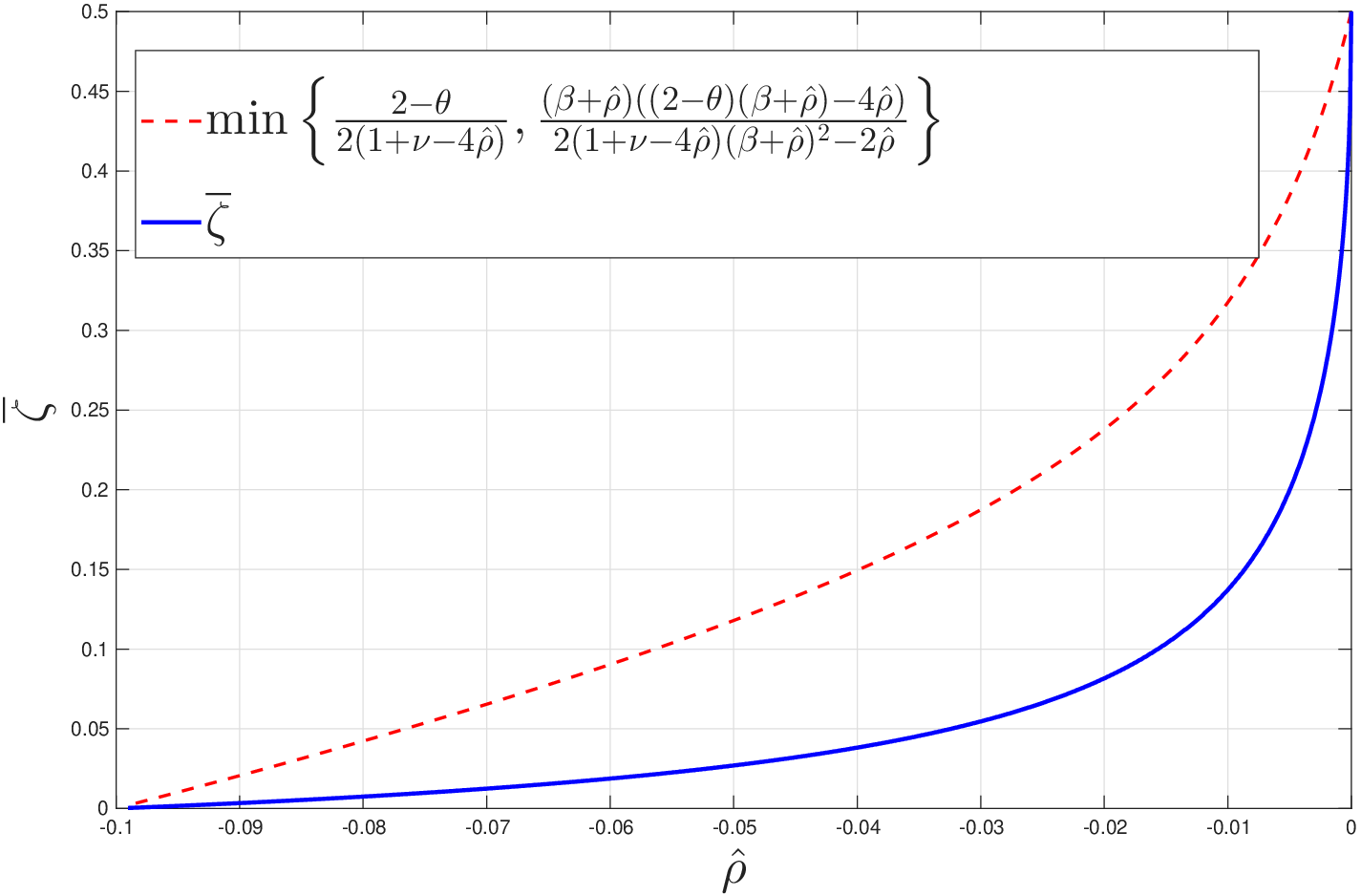}
        \caption{\scriptsize $\beta = 0.1$ and $\theta = 1$}
        \label{fig:beta_01}
    \end{subfigure}
    \hfill
    \begin{subfigure}[b]{0.4\textwidth}
        \centering
        \includegraphics[width=\linewidth]{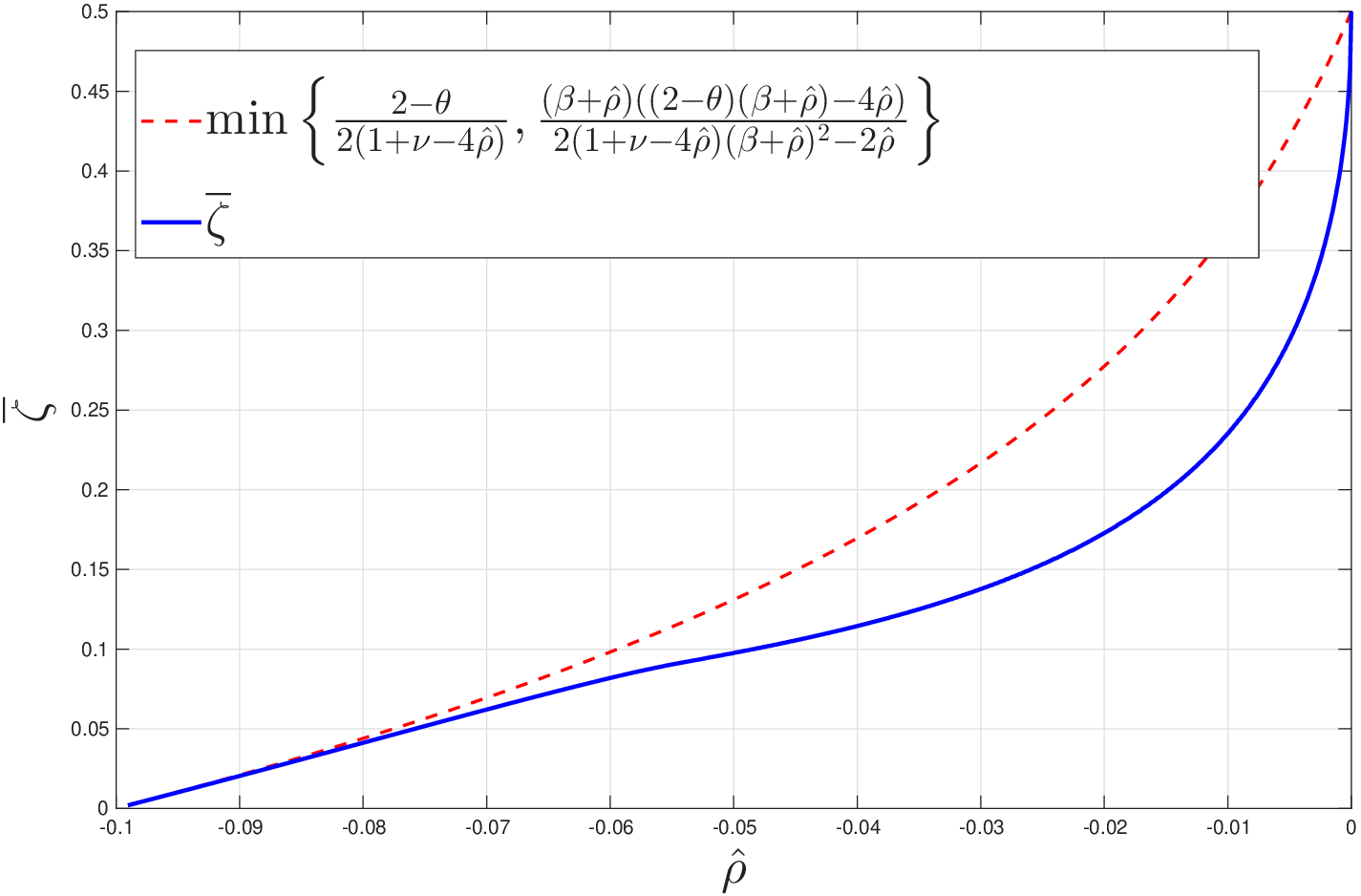}
        \caption{\scriptsize $\beta = 0.1$ and $\theta = 1/4$}
        \label{fig:beta_01_t}
    \end{subfigure}
      % Fila 3
    \begin{subfigure}[b]{0.4\textwidth}
        \centering
        \includegraphics[width=\linewidth]{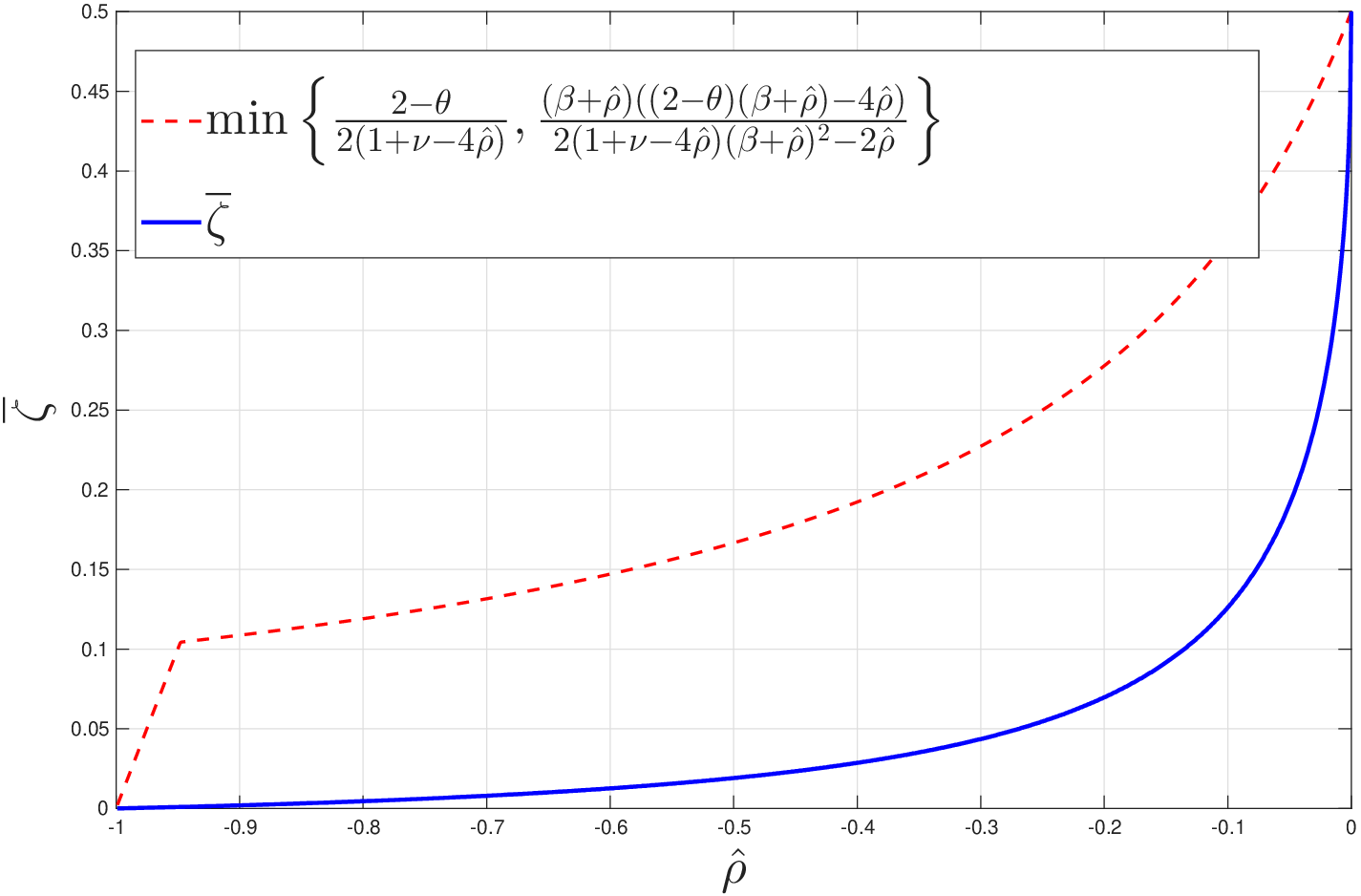}
        \caption{\scriptsize $\beta = 1$ and $\theta = 1$}
        \label{fig:beta_1}
    \end{subfigure}
    \hfill
    \begin{subfigure}[b]{0.4\textwidth}
        \centering
        \includegraphics[width=\linewidth]{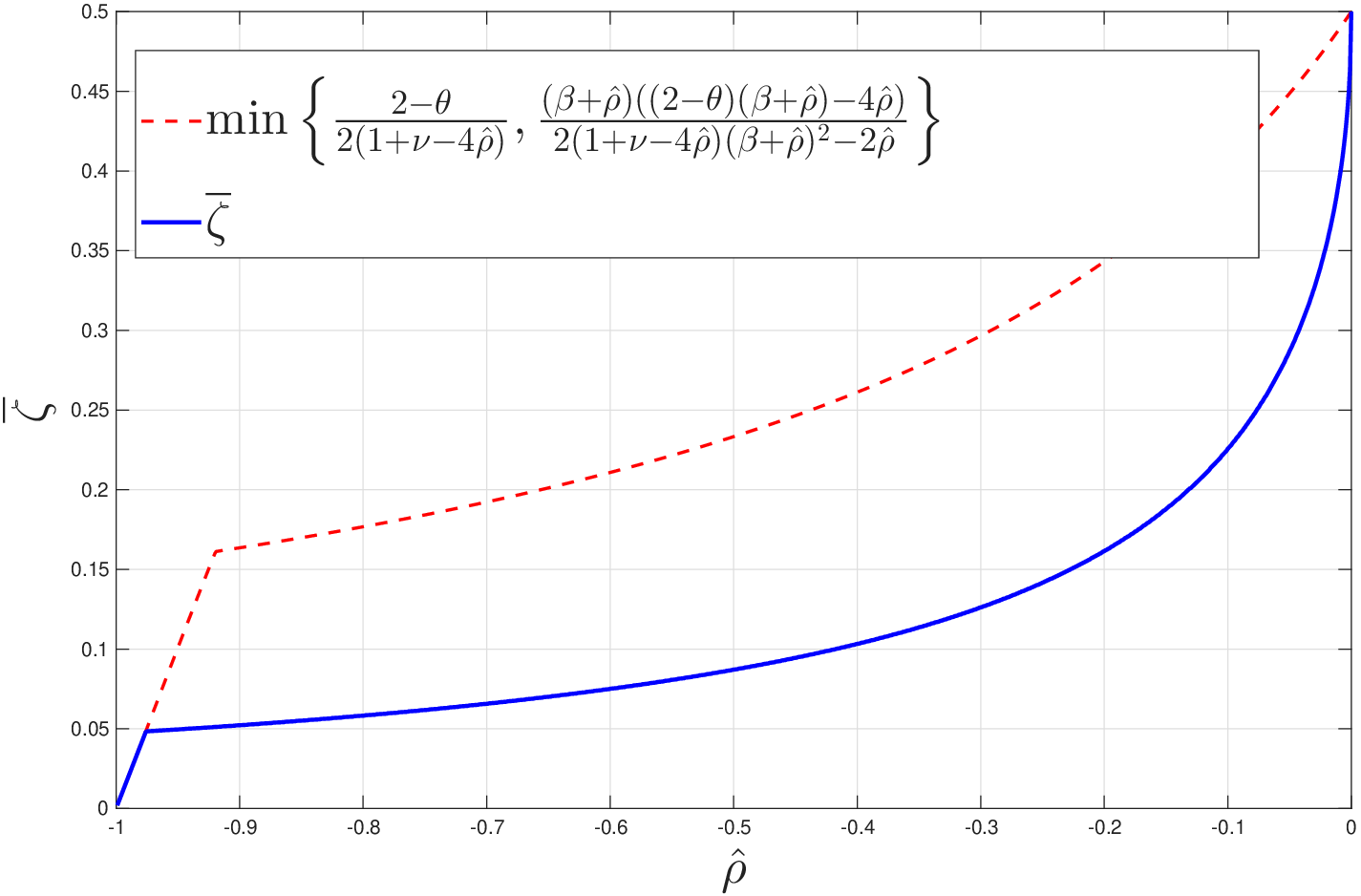}
        \caption{\scriptsize $\beta = 1$ and $\theta = 1/4$}
        \label{fig:beta_1_t}
    \end{subfigure}

   % Fila 4
      % Fila 1
    \begin{subfigure}[b]{0.4\textwidth}
        \centering
        \includegraphics[width=\linewidth]{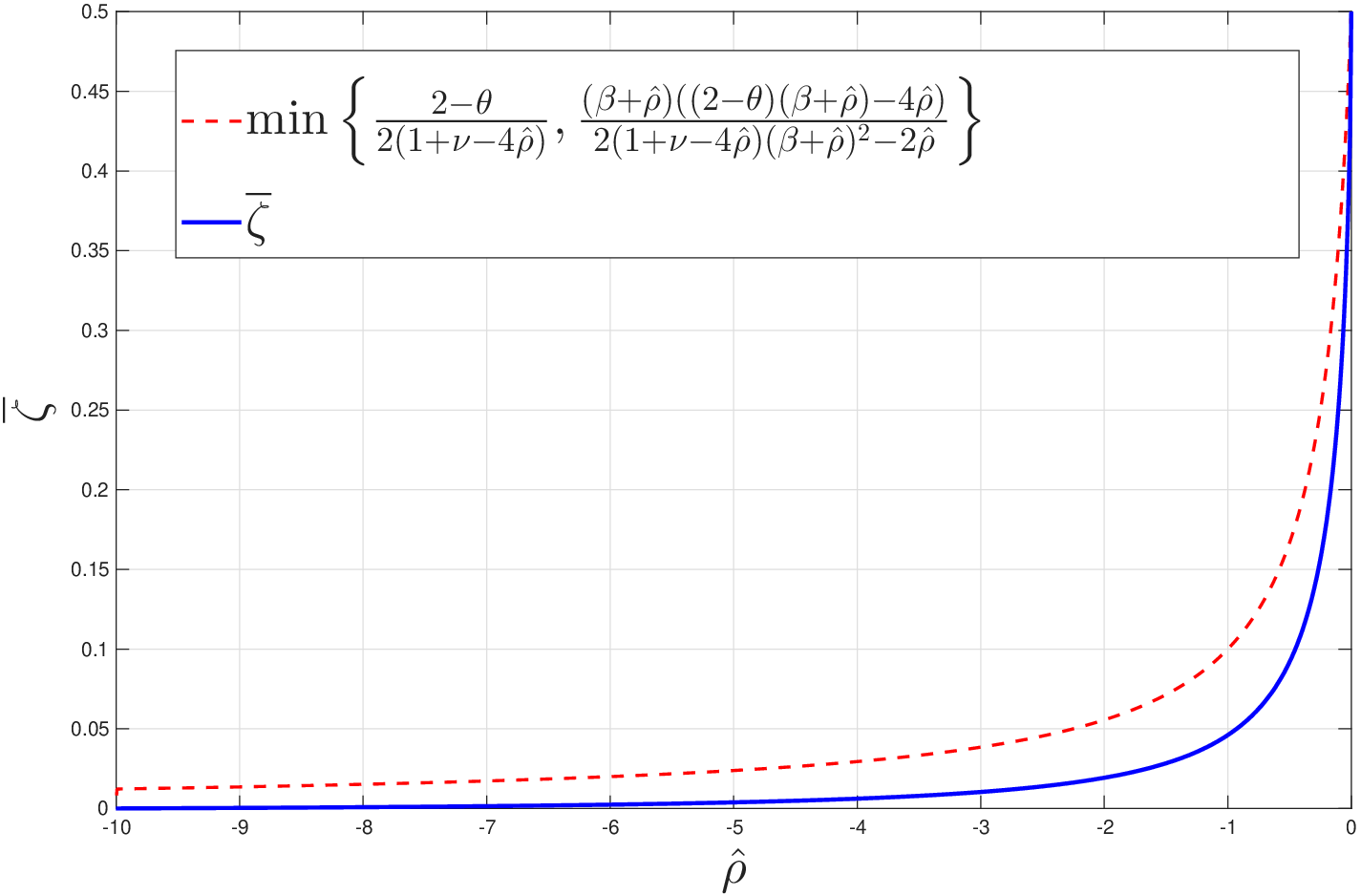}
        \caption{\scriptsize $\beta = 10$ and $\theta = 1$}
        \label{fig:beta_10}
    \end{subfigure}
    \hfill
    \begin{subfigure}[b]{0.4\textwidth}
        \centering
        \includegraphics[width=\linewidth]{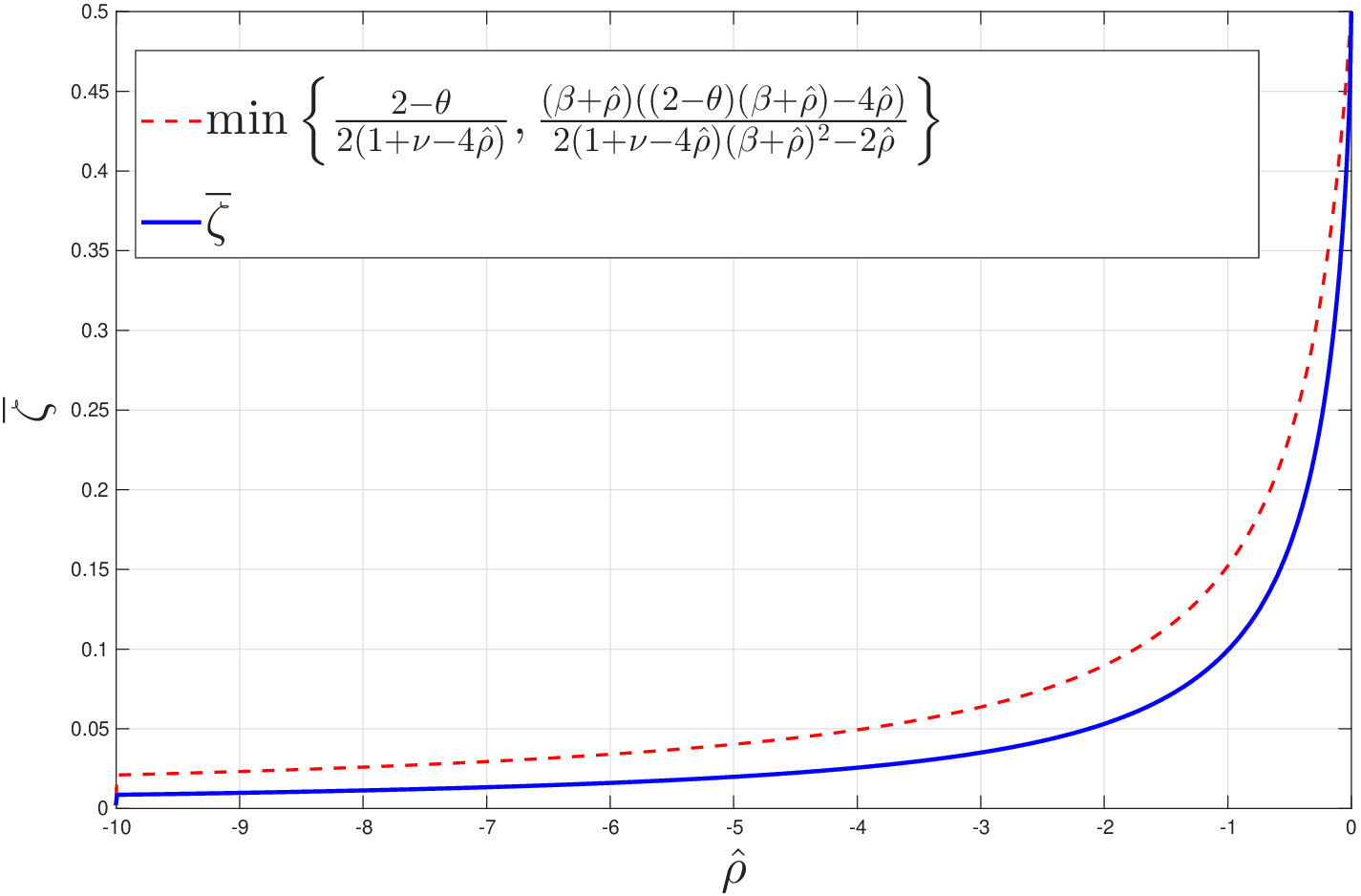}
        \caption{\scriptsize $\beta = 10$ and $\theta = 1/4$}
        \label{fig:beta_10_t}
    \end{subfigure}
    \caption{$\overline{\zeta}(\beta,\cdot,\theta)$ for $\beta\in \{0.01,0.1,1,10\}$ and $\theta \in \{1,1/2\}$. In each case $\xi=1+\zeta$.}
    \label{fig:z_vs_rho_and_zeta}
\end{figure}

 	\section{Solution of Problem~\ref{pro:main}}\label{sec:MR}
	
	In this section, we present our main algorithm for solving Problem~\ref{pro:main}. First, we introduce some notation and results that are useful to establish the convergence of the proposed method.

    \subsection{Preliminary results}
	\label{se:FHRB}
    We first introduce some additional notation and assumptions. 
	\begin{notation}
    \label{no:Sec4}
		In the context of Problem~\ref{pro:main}, 
    consider the following operators
        defined on the product space $\bm{\H} = \H\times \G$:
		\begin{equation}\label{def:operators}
			\begin{aligned}
				&\bm{B} \colon \bm{\H} \to 2^{\bm{\H}} \colon (z,v) \mapsto (A+D)z \times B^{-1}v\\
				&\bm{L} \colon \bm{\H} \to \bm{\H} \colon (z,v) \mapsto (L^*v,-Lz)\\
				&\bm{A} \colon \bm{\H} \to 2^{\bm{\H}} \colon (z,v) \mapsto (\bm{B}+\bm{L})(z,v)\\
				&\bm{C} \colon \bm{\H} \to {\bm{\H}} \colon (z,v) \mapsto (Cz,0)
			\end{aligned}
		\end{equation}
		and define 
		\begin{equation}\label{def:setS}
			\pmb{\mathbb{S}}=\menge{(\x^*,-\bm{C}\x^*)}{\x^* \in \zer(\bm{A}+\bm{C})}.
		\end{equation}
	\end{notation}
	Note that, based on the above definitions, $\zer(\bm{A}+\bm{C}) = \bm{Z}$ .
	\begin{asume}\label{assume:1}
		In the context of Problem~\ref{pro:main}, let $(\mu_A,\mu_B,\rho_A,\rho_B)\in \R^4$ and $(\nu_A,\nu_B)\in \RP^2$. Suppose that
		\begin{enumerate}
			%\item \label{eq:neqassume1} {\color{red} $D_K\neq 0$},
			\item\label{eq:assume12} $\rho_A+\rho_B\geq 0$, $\mu_A+\mu_B\geq0$, 
            $(\rho_A +\rho_B =0\Rightarrow \rho_A=\rho_B=0)$, and $(\mu_A +\mu_B =0\Rightarrow \mu_A=\mu_B=0)$;
			\item\label{eq:monAassume1}  $A+D$ is $(\nu_A\id+\mu_AL^*L,\rho_A)$-semimonotone on  $\pmb{\mathbb{S}}_1=\menge{(x^*,-Cx^*-L^*u^*)}{(x^*,u^*) \in \bm{Z}}$;
			\item\label{eq:monBassume1b} $B$ is $(\mu_B,\nu_B\id+\rho_BLL^*)$-semimonotone on $\pmb{\mathbb{S}}_2=\menge{(Lx^*,u^*)}{(x^*,u^*) \in \bm{Z}}$;
			\item\label{eq:monAassume2}   there exist nonempty sets $\Omega_A \subset \RPP$ and $\Omega_B \subset \RPP$ such that, for every $(\tau,\sigma)\in  \Omega_A\times\Omega_B$, $J_{\tau A}$ and $J_{\sigma B^{-1}}$ are single-valued and have full domain;
			\item\label{assume:1v} $\gra(\bm{A}+\bm{C})$ is sequentially weak-strong closed.
		\end{enumerate} 
	\end{asume}	
    The following proposition showcases some
    situations where Assumption~\ref{assume:1}\ref{eq:monAassume1} holds, whenever the resulting value of $\nu_A$ is nonnegative.
    \begin{prop}
    \label{p:ex42(ii)}
        Let $(\nu,\mu,\rho,\widetilde{\rho})\in \mathbb{R}^4$, let $T$ be a bounded linear operator from $\H$
        to a real Hilbert space $\K$,
        let $K\neq 0$ be a linear bounded operator from $\K$ to $\H$,
        and let $\widetilde{D}\colon \K \to \K$ be a 
        $(\vartheta_{\widetilde{D}})^{-1}$-cocoercive operator with 
        $\vartheta_{\widetilde{D}}\in ]0,+\infty[$.
        Assume that $A$ is $(\nu\id+\mu L^*L,\rho)$-semimonotone. Then, $A+D$ is $(\nu_A\id+\mu L^*L,\rho_A)$-semimonotone if
        \begin{enumerate}
            \item $D=K\circ \widetilde{D}\circ T$, 
            %$\nu\geq \vartheta_{T^*-K}$, 
            $1+\vartheta_{\widetilde{D}}\rho\|K\|^2>0$,
            and $(\nu_A,\rho_A)=\big(\nu-\vartheta_{T^*-K},\rho/(1+\vartheta_{\widetilde{D}}\rho\|K\|^2)\big)$, where
            $\vartheta_{T^*-K}$ is the Lipschitz modulus of $(T^*-K)\circ \widetilde{D}\circ T$.
            \item     \label{p:ex42(ii)ii}
 $D$ is a bounded linear operator, $D-\widetilde{\rho} D^*D$ is  a monotone operator 
            with $\widetilde{\rho}+\rho >0$, and
            $(\nu_A,\rho_A) = \big(\nu,\rho\widetilde{\rho}/(\rho+\widetilde{\rho})\big)$.
            \item $D=0$ and $(\nu_A,\rho_A) = (\nu,\rho)$.
        \end{enumerate}
    \end{prop}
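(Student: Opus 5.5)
The plan is to verify the defining inequality \eqref{eq:defrhomon} for $A+D$ directly on its whole graph; this is stronger than what Assumption~\ref{assume:1}\ref{eq:monAassume1} requires, since it then holds in particular on $\pmb{\mathbb{S}}_1$. Every element of $\gra(A+D)$ has the form $(x,u+Dx)$ with $(x,u)\in\gra A$. So for two such pairs $(x,u+Dx)$ and $(y,v+Dy)$ we must bound $\scal{x-y}{u-v}+\scal{x-y}{Dx-Dy}$ from below. We start from the $(\nu\id+\mu L^*L,\rho)$-semimonotonicity of $A$:
\[
\scal{x-y}{u-v}\ \ge\ \nu\|x-y\|^2+\mu\|L(x-y)\|^2+\rho\|u-v\|^2 .
\]
In each case we then show that $D$ is $(\nu_D\id,\rho_D)$-semimonotone for a suitable pair $(\nu_D,\rho_D)$ with $\rho_D>0$. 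The two comonotonicity terms are merged through Lemma~\ref{lem:des} with $M=\id$, which gives
\[
\rho\|u-v\|^2+\rho_D\|Dx-Dy\|^2\ \ge\ \frac{\rho\rho_D}{\rho+\rho_D}\|(u+Dx)-(v+Dy)\|^2 .
\]
This requires only $\rho+\rho_D>0$ and allows $\rho<0$. The $\mu L^*L$ term is carried through unchanged, and $\nu_A=\nu+\nu_D$. This is the same mechanism as Lemma~\ref{le:sumsemimon3}, but done by hand because of the non-scalar metric part $\mu L^*L$.

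For case (i), split $D=T^*\circ\widetilde D\circ T+(K-T^*)\circ\widetilde D\circ T$. For the first piece, cocoercivity of $\widetilde D$ gives
\[
\scal{x-y}{T^*(\widetilde DTx-\widetilde DTy)}=\scal{Tx-Ty}{\widetilde DTx-\widetilde DTy}\ge \vartheta_{\widetilde D}^{-1}\|\widetilde DTx-\widetilde DTy\|^2 .
\]
Since $\|Dx-Dy\|\le\|K\|\,\|\widetilde DTx-\widetilde DTy\|$, the right-hand side is at least $(\vartheta_{\widetilde D}\|K\|^2)^{-1}\|Dx-Dy\|^2$. For the second piece, Cauchy--Schwarz and the Lipschitz modulus give a lower bound of $-\vartheta_{T^*-K}\|x-y\|^2$. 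Hence $D$ is $(-\vartheta_{T^*-K},\,\rho_D)$-semimonotone with $\rho_D=(\vartheta_{\widetilde D}\|K\|^2)^{-1}>0$. The condition $\rho+\rho_D>0$ is exactly $1+\vartheta_{\widetilde D}\rho\|K\|^2>0$, and
\[
\frac{\rho\rho_D}{\rho+\rho_D}=\frac{\rho}{1+\vartheta_{\widetilde D}\rho\|K\|^2},
\]
which matches the stated $(\nu_A,\rho_A)$.

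For case (ii), monotonicity of $D-\widetilde\rho D^*D$ together with linearity gives
\[
\scal{x-y}{D(x-y)}\ge\widetilde\rho\,\|D(x-y)\|^2 ,
\]
so $D$ is $(0,\widetilde\rho)$-semimonotone. Lemma~\ref{lem:des} with $\rho+\widetilde\rho>0$ then yields $\rho_A=\rho\widetilde\rho/(\rho+\widetilde\rho)$ and $\nu_A=\nu$. Case (iii) is immediate, since $A+D=A$.

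The only delicate step is case (i). There one must check that the cocoercive bound is expressed in terms of $\|Dx-Dy\|$, which is why the factor $\|K\|^2$ appears and why $K\neq0$ is needed. One must also handle a possibly negative $\rho$ in Lemma~\ref{lem:des}. The lemma's proof only uses $\rho+\beta>0$, so we will note explicitly that it applies to $(\rho,\rho_D)$ under the stated positivity condition.
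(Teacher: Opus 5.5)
Your proposal is correct and follows the paper's proof step by step. It uses the same splitting $D=T^*\widetilde D T+(K-T^*)\widetilde D T$, estimated by cocoercivity plus Cauchy--Schwarz, in case (i). In case (ii) it uses the same equivalence between monotonicity of $D-\widetilde\rho D^*D$ and $\widetilde\rho$-comonotonicity of $D$. The comonotonicity terms are merged through Lemma~\ref{lem:des} in both. The only difference is that the paper cites Lemma~\ref{le:sumsemimon3} directly, while you redo its short argument by hand. That is slightly more careful, since the lemma is stated only for metrics of the form $(\nu_1 S,\rho_1 S^{-1})$ and so does not literally cover the $\mu L^*L$ term.
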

    \begin{proof}\ 
    \begin{enumerate}
        	\item\label{rem:A+D_Kmon3} 
            For every  $(x,y) \in \H^2$, we have
			\begin{align*}
				\scal{x-y}{K\widetilde{D}Tx-K\widetilde{D}Ty} %&=\scal{x-y}{T^*DTx-T^*DTy}+\scal{x-y}{(K-T^*)(DTx-DTy)}\\
				&= \scal{Tx-Ty}{\widetilde{D}Tx-\widetilde{D}Ty}+\scal{x-y}{(K-T^*)(\widetilde{D}Tx-\widetilde{D}Ty)}\\
				&\geq \vartheta_{\widetilde{D}}^{-1}\|\widetilde{D}Tx-\widetilde{D}Ty\|^2- {\vartheta}_{{K-T^*}}\|x-y\|^2\\
				&\geq \frac{1}{\vartheta_{\widetilde{D}}\|K\|^2}\|K\widetilde{D}Tx-K\widetilde{D}Ty\|^2- {\vartheta}_{{K-T^*}}\|x-y\|^2.
			\end{align*}
            %Assume now that $A$ is $(\nu,\rho)$-semimonotone with $(\nu,\rho) \in \R^2$
			In view of Lemma~\ref{le:sumsemimon3}, 
            if $1+\vartheta_{\widetilde{D}}\rho\|K\|^2 >0$, then $A+D$ is $\big((\nu-\vartheta_{{T^*-K}})\id +\mu L^*L,\rho/(1+\vartheta_{\widetilde{D}}\rho\|K\|^2)\big)$-semimonotone.
            
            \item\label{rem:A+D_Kmon5}
			If $D$ is linear, $D$ is $\widetilde{\rho}$-comonotone for $\widetilde{\rho} \in \R$ if and only if
			\begin{align}
				(\forall x \in \H) \quad \scal{x}{Dx}\geq \widetilde{\rho} \| Dx\|^2,
			\end{align}
            that is 
 			\begin{align}
				(\forall x \in \H) \quad  \scal{x}{Dx}-\widetilde{\rho}  \scal{x}{D^*Dx}\geq0.
			\end{align}           
			Hence, $D$ is $\widetilde{\rho}$-comonotone if and only if $D-\widetilde{\rho} D^*D$ is monotone. One can thus apply
Lemma~\ref{le:sumsemimon3} to claim that $A+D$ is
$(\nu\id+\mu L^*L,\rho\widetilde{\rho}/(\rho+\widetilde{\rho}))$-semimonotone.
\item The last case is straightforward.
    \end{enumerate}
    \end{proof}
    
	The following remark provides more insight on  Assumptions~\ref{assume:1}\ref{eq:assume12}-\ref{eq:monBassume1b}
    when semimonotonicity holds globally.

    	\begin{rem}\label{rem:A+D_Kmon}\ 
        \begin{enumerate}    
            \item\label{rem:A+D_Kmon0} If $\rho_A=\rho_B=0$
            and $\mu_A+\mu_B>0$, then
            $A+D+L^* \circ B\circ L$
            is monotone, which means 
            that \eqref{eq:primalinclu}
            is a monotone inclusion problem. In turn, if $\mu_A=\mu_B=0$ and $\rho_A+\rho_B>0$, then $B^{-1}-L\circ (A+D)^{-1}\circ (-L^*)$ is monotone, which implies that \eqref{eq:dualinclu}
            is a monotone inclusion problem when $C=0$. However, when $A+D$ or $B$ are not monotone, we will show that
            $\bm{A}$ is cohypomonotone.
			\item\label{rem:A+D_Kmoni} If $A+D$ is $\mu_A L^*L$-monotone
            with $\mu_A\in~]-\infty,0]$, then it
            is $\mu_A \|L\|^2$-monotone.
			Indeed, let $((z,v),(x,u)) \in (\gra (A+D))^2$. We have
			\begin{align*}
				\scal{z-x}{v-u} \geq \mu_A\scal{z-x}{L^*L(z-x)} = \mu_A\|L(z-x)\|^2\geq \mu_A \|L\|^2 \|z-x\|^2.
			\end{align*}
            %\textcolor{blue}{
            Similarly, if $B$ is $\rho_B LL^*$-comonotone for $\rho_B\in ]-\infty,0]$, then $B$ is $\rho_B \|L\|^2$-comonotone.
            			\item\label{rem:A+D_Kmon4} Assume that $L\neq 0$. If $A+D$ is $\nu_A$-strongly monotone for $\nu_A \in \RPP$, then it is $\mu_A L^*L$-monotone with $\mu_A= \nu_A/\|L\|^2$.
            %            (\tilde{\nu} L^*L/\|L\|^2)$-monotone. 
			Indeed, for every $((z,v),(x,u)) \in (\gra (A+D))^2$,
			\begin{align*}
				\scal{z-x}{v-u} \geq \nu_A\|z-x\|^2
				\geq \frac{\nu_A}{\|L\|^2}\|L(z-x)\|^2
				=\frac{\nu_A}{\|L\|^2}\|z-x\|^2_{L^*L}.
			\end{align*}
            %\textcolor{blue}{
            Similarly, if $B$ is $\nu_B$-cocoercive for $\nu_B>0$, $B$ is $\nu_B\|L\|^{-2}LL^*$-comonotone.
		\end{enumerate}
	\end{rem}

	The next proposition provides conditions going beyond maximal monotonicity, ensuring that Assumption~\ref{assume:1}\ref{eq:monAassume2} is fulfilled.

    \begin{prop}\label{p:Omegacond}
        Let $(\nu,\mu,\rho,\nu_B,\mu_B,\rho_B)\in \mathbb{R}^6$
        and let $L\in \mathcal{B}(\H,\G)$.
        Assume that $A$ is $(\nu \id + \mu L^*L,\rho)$-semimonotone and
        $B$ is $(\mu_B,\nu_B\id+\rho_B LL^*)$-semimonotone.
        Let $\widetilde{\Omega}_A\subset\RPP$, $\widetilde{\Omega}_B\subset\RPP$,
        and
        \begin{align}
        \Omega_A&=\menge{\tau\in\widetilde{\Omega}_A}{\ran(\id+\tau A)=\H},\\
        \Omega_B&=\menge{\sigma\in\widetilde{\Omega}_B}{\ran(\id+\sigma B^{-1})=\G}.
        \end{align}
        Then Assumption~\ref{assume:1}\ref{eq:monAassume2} holds if $\Omega_A\neq \varnothing$, $\Omega_B\neq \varnothing$, one
        of the following two conditions holds:
        \begin{enumerate}
             \item \label{p:Omegacondi} $(\nu,\mu)\in [0,+\infty[^2$
            and $\widetilde{\Omega}_A = ]\max\{0,-\rho\},+\infty[$,
             \item \label{p:Omegacondii} $\rho\in [0,+\infty[$ and %$\mu \in ]-\infty,0]$, and 
             \begin{equation}
                 \widetilde{\Omega}_A = \begin{cases}
                 ]0,+\infty[, &\mbox{if $\nu+\min\{\mu,0\}\|L\|^2\geq 0$,}\\
                 ]0,-1/(\nu+\min\{\mu,0\}\|L\|^2)[ & \mbox{otherwise,}
                 \end{cases}
             \end{equation}
        \end{enumerate}
        and one
        of the following two conditions holds:
        \begin{enumerate}
        \setcounter{enumi}{2}
            \item \label{p:Omegacondiii} $(\nu_B,\rho_B)\in [0,+\infty[^2$
            and $\widetilde{\Omega}_B= ]\max\{0,-\mu_B\},+\infty[$,
            \item \label{p:Omegacondiv} $\mu_B\in [0,+\infty[$
            and %$\rho_B\in ]-\infty,0]$, 
            \begin{equation}
        \label{e:Omegabiv}\widetilde{\Omega}_B=\begin{cases}
            ]0,+\infty[, &\mbox{if 
            $\nu_B+\min\{\rho_B,0\}\|L\|^2\geq 0$,}\\
            ]0,-1/(\nu_B+\min\{\rho_B,0\}\|L\|^2)[, &
            \mbox{otherwise.}
            \end{cases}
            \end{equation}
        \end{enumerate}       
            In addition, assume that $A$ is maximally $(\nu \id + \mu L^*L,\rho)$-semimonotone and
        $B$ is maximally $(\mu_B,\nu_B\id+\rho_B LL^*)$-semimonotone. If $\nu=\mu=0$ or $\rho=0$, then 
            $\Omega_A=\widetilde{\Omega}_A$ and, if $\mu_B=0$ or $\nu_B=\rho_B=0$,
            then $\Omega_B=\widetilde{\Omega}_B$.
    \end{prop}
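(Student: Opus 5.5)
The plan is to verify the two halves of Assumption~\ref{assume:1}\ref{eq:monAassume2} separately. Full domain of $J_{\tau A}$ (resp.\ $J_{\sigma B^{-1}}$) holds by the very definition of $\Omega_A$ (resp.\ $\Omega_B$), since $\dom J_{\tau A}=\ran(\id+\tau A)$. So the main work is single-valuedness, which I will obtain directly from the semimonotonicity inequality \eqref{eq:defrhomon}. The final statement about maximality will then reduce to range (Minty-type) arguments.

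For $A$, fix $\tau\in\Omega_A$ and $z\in\H$, and let $x,y\in J_{\tau A}z$. Set $d=x-y$. Then $(z-x)/\tau\in Ax$ and $(z-y)/\tau\in Ay$, and the difference of these two elements is $-d/\tau$. Plugging into the $(\nu\id+\mu L^*L,\rho)$-semimonotonicity of $A$ gives
\begin{equation*}
-\tfrac{1}{\tau}\|d\|^2\geq \nu\|d\|^2+\mu\|Ld\|^2+\tfrac{\rho}{\tau^2}\|d\|^2 .
\end{equation*}
\begin{itemize}
\item In case \ref{p:Omegacondi}, the terms in $\nu,\mu$ are nonnegative and can be dropped. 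Multiplying by $\tau^2$ gives $(\tau+\rho)\|d\|^2\le 0$, and $\tau>-\rho$ forces $d=0$.
\item In case \ref{p:Omegacondii}, the $\rho$ term is nonnegative and can be dropped. Using $\mu\|Ld\|^2\geq\min\{\mu,0\}\|L\|^2\|d\|^2$ yields $\big(1/\tau+\nu+\min\{\mu,0\}\|L\|^2\big)\|d\|^2\le0$. The definition of $\widetilde{\Omega}_A$ is exactly the requirement that this coefficient be positive, so again $d=0$.
\end{itemize}

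For $B$, note that $B$ being $(\mu_B,\nu_B\id+\rho_BLL^*)$-semimonotone means $B^{-1}$ is $(\nu_B\id+\rho_BLL^*,\mu_B)$-semimonotone. The identical computation with $\sigma$, $L^*$ in place of $\tau$, $L$ then gives cases \ref{p:Omegacondiii} and \ref{p:Omegacondiv}.

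For the last claim, it remains to show $\ran(\id+\tau A)=\H$ for every $\tau\in\widetilde\Omega_A$.
\begin{itemize}
\item If $\nu=\mu=0$, then $\tau A$ is maximally $(\rho/\tau)$-comonotone with $\rho/\tau>-1$, because $\tau>\max\{0,-\rho\}$. Surjectivity of $\id+\tau A$ then follows from \cite[Theorem~2.17]{BauschkeMoursiXianfu2021}, as already used in Remark~\ref{rem:sumsemimon}.
\item If $\rho=0$, let $M=\nu\id+\mu L^*L$. Since $M$ is bounded, linear and self-adjoint, adding $M$ gives a bijection between monotone operators and $M$-monotone operators, so $A-M$ is maximally monotone. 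Write
\begin{equation*}
\id+\tau A=(\id+\tau M)+\tau(A-M).
\end{equation*}
The operator $\id+\tau M$ is continuous, linear, and strongly monotone precisely when $1/\tau+\nu+\min\{\mu,0\}\|L\|^2>0$, which is the condition defining $\tau\in\widetilde\Omega_A$. It is then maximally monotone with full domain, so the sum is maximally monotone \cite{bauschkebook2017} and strongly monotone, hence surjective.
\end{itemize}
The same two arguments applied to $B^{-1}$ give $\Omega_B=\widetilde\Omega_B$ when $\mu_B=0$ or $\nu_B=\rho_B=0$.

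I expect the main obstacle to be the maximality step in the case $\rho=0$. There I must check carefully that maximal $M$-monotonicity of $A$ is equivalent to maximal monotonicity of $A-M$ when $M$ is not positive. I must also check that the sum rule applies because one summand has full domain.
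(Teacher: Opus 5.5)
Your proposal is correct and follows essentially the same route as the paper. Full domain comes from the definition of $\Omega_A,\Omega_B$, and single-valuedness from the semimonotonicity inequality. Surjectivity comes from the comonotone Minty-type result of \cite{BauschkeMoursiXianfu2021} when $\nu=\mu=0$, and from the shift $A\mapsto A-\nu\id-\mu L^*L$ together with the sum rule and strong monotonicity when $\rho=0$.

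The only real difference is that you check single-valuedness by a direct two-line computation with $d=x-y$. The paper instead cites \cite[Proposition~2.12]{BauschkeMoursiXianfu2021} and Remark~\ref{rem:A+D_Kmon}\ref{rem:A+D_Kmoni}, so your version is cleaner and self-contained. One small point: $\id+\tau M$ is strongly monotone \emph{if}, not ``precisely when'', $1/\tau+\nu+\min\{\mu,0\}\|L\|^2>0$ (the converse can fail for $\mu>0$), but you only use the sufficient direction.
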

    \begin{proof}
        If $A$ is 
            $(\nu \id + \mu L^*L,\rho)$-semimonotone with $(\nu,\mu)\in [0,+\infty[^2$,
            then it is $\rho$-comonotone. If $\tau> -\rho$,             
            %If $A$ is $\nu$-monotone and $\tau \nu>-1$, 
            then $J_{\tau A}$ is single-valued 
            on $\ran(\id+\tau A)$ \cite[Proposition~2.12]{BauschkeMoursiXianfu2021}.
            %and has full domain, as already noted in Section~\ref{se:notation}. 
            %Hence, we can set $\Omega_A = ]0,+\infty[$
            %if $\nu\geq 0$, and $\Omega_A=~]0,-1/\nu[$ otherwise. Similarly, if 
            %$A$ is $\rho$-comonotone with $\rho\in \R$, then $\tau A$ is $(\rho/\tau)$-comonotone,
            Let us set $\widetilde{\Omega}_A =\, ]0,+\infty[$ if $\rho \geq 0$,
            and $\widetilde{\Omega}_A =\, ]-\rho,+\infty[$
            otherwise. For every $\tau\in\Omega_A$, $\ran(\id+\tau A)=\H$.
            Therefore, $J_{\tau A}$ is single-valued and has full domain, for every $\tau\in\Omega_A$.
            
            On the other hand, if $A$ is 
            $(\nu \id + \mu L^*L,\rho)$-semimonotone with $\rho\in [0,+\infty[$ %and $\mu \in ]-\infty,0]$,
            it follows from Remark~\ref{rem:A+D_Kmon}\ref{rem:A+D_Kmoni} that
            it is $(\nu+\min\{\mu,0\}\|L\|^2)$-monotone.
            Then $J_{\tau A}$ is single-valued on its domain
            when $\tau \in \widetilde{\Omega}_A$ with $\widetilde{\Omega}_A= ]0,+\infty[$ if $\nu+\min\{\mu,0\}\|L\|^2\geq 0$, and
            $\widetilde{\Omega}_A =\, ]0,-1/(\nu+\min\{\mu,0\}\|L\|^2)[$
            otherwise.
            
            Similarly, if $B$ is $(\mu_B,\nu_B\id+\rho_B LL^*)$-semimonotone
            with $(\nu_B,\rho_B)\in [0,+\infty[^2$,
            $B$ is $\mu_B$-monotone, that is $B^{-1}$ is $\mu_B$-comonotone.
            %(resp. $\nu_B$-comonotone), 
            Then, $J_{\sigma B^{-1}}$ is single-valued 
            on $\ran(\id+\sigma B^{-1})$,
            %.
            %with full domain 
            if $\sigma\in \widetilde{\Omega}_B$, where $\widetilde{\Omega}_B=\, ]0,+\infty[$ if $\mu_B\geq 0$ 
            %(resp. $\nu_B \geq 0)$, 
            and $\widetilde{\Omega}_B=\,]-\mu_B,+\infty[$ otherwise.
            
            On the other hand, if $B$ is $(\mu_B,\nu_B\id+\rho_B LL^*)$-semimonotone
            with $\mu_B\in [0,+\infty[$
            %and $\rho_B\in ~]-\infty,0]$, 
            then it follows from Remark~\ref{rem:A+D_Kmon}\ref{rem:A+D_Kmoni}  that
            $B^{-1}$ is $(\nu_B+\min\{\rho_B,0\}\|L\|^2)$-monotone. Thus, we
            can set $\widetilde{\Omega}_B$
            as indicated in \eqref{e:Omegabiv}.

    Suppose now that $A$ and $B$ are maximally semimonotone with respect to the moduli stated in the proposition. We prove the additional range conclusions by considering separately the cases $\nu=\mu=0$, $\rho=0$, $\nu_B=\rho_B=0$, and $\mu_B=0$.
    
            If $\nu=\mu=0$,
            then $\tau A$ is maximally $\rho/\tau$-comonotone and, by 
            \cite[Corollary~2.11]{BauschkeMoursiXianfu2021}, $\ran(\id+\tau A)=\H$, for every $\tau\in\, ]\max\{0,-\rho\},+\infty[$.

            If $\rho=0$, then $A$ is $\nu \id+\mu L^*L$-monotone, that is 
            $\widetilde{A}=A-\nu\id-\mu L^*L$ is monotone.
            The graph of $A$ is mapped to the graph of $\widetilde{A}$ by
            $(x,u)\mapsto (x,u-\nu x-\mu L^*L x)$. Since this transformation is bijective, it preserves maximality. For every
            $\tau\in \RPP$, 
             \begin{equation}
            \id+\tau A = \tau \widetilde{A}+Q_A, \quad 
            Q_A = (1+\tau \nu)\id+\tau\mu L^*L.
            \end{equation}
            We have $Q_A\succeq \big(1+\tau(\nu+\min\{\mu,0\}\|L\|^2)\big)\id$. 
            If $\nu+\min\{\mu,0\}\|L\|^2\ge 0$, then
            $Q_A$ is  strongly monotone and maximally monotone, for every $\tau\in \RPP$.
            Maximal strong monotonicity of 
            $Q_A$ also holds if
            $\nu+\min\{\mu,0\}\|L\|^2< 0$ and $1+\tau(\nu+\min\{\mu,0\}\|L\|^2)> 0$, that is $\tau \in\, ]0,-1/(\nu+\min\{\mu,0\}\|L\|^2[$.
            Under these conditions, since $\widetilde{A}$ is maximally monotone and $Q_A$ is  strongly monotone and maximally monotone with full domain, $\tau \widetilde{A}+Q_A$ is strongly monotone 
            and maximally monotone \cite[Corollary~25.5]{bauschkebook2017}.
            Hence its range is $\H$  \cite[Corollary~23.37]{bauschkebook2017}.

        The $(\mu_B,\nu_B\id+\rho_B LL^*)$-semimonotonicity of $B$
         is equivalent to
         the monotonicity of 
         $\widetilde{B}= B^{-1}-\nu_B\id-\rho_B LL^*$, if $\mu_B=0$, and to the $\mu_B$-comonotonicity of $B^{-1}$, if $\nu_B=\rho_B=0$. By arguments analogous to those used above,
         if ($\nu_B=\rho_B=0$ or 
         $\mu_B=0$) and $\sigma \in \widetilde{\Omega}_B$,
         then $\ran(\id+\sigma B^{-1})=\G$.
    \end{proof}
    
    We now state a number of results which will be useful in the convergence proofs of the next section.
    In particular, the next result highlights cohypomonotonicity properties of $\bm{A}$. \begin{prop}\label{prop:ursemimongen}
		In the context of
        Notation~\ref{no:Sec4} and Assumption~\ref{assume:1}, let $(\tau,\sigma)
		\in \RPP^2$, 
        %set $\bm{\H}=\H\times\G$, 
        %consider 
        %the operators defined in \eqref{def:operators}, and 
        let $\bm{S}\in \mathcal{B}(\bm{\H},\bm{\H})$ be a self adjoint and strongly monotone, and suppose that
		\begin{equation}\label{eq:ineqS}
        (\forall (z,v)\in \bm{\H})\quad
			\underline{\chi}\|(z,v)\|_{\bm{S}}^2 \leq \|z\|^2+\frac{\tau}{\sigma}\|v\|^2\quad \text{ and }\quad \|z\|^2+\frac{\sigma}{\tau}\|v\|^2\leq\overline{\chi}\|(z,v)\|_{\bm{S}^{-1}}^2,
		\end{equation}
		where $(\underline{\chi},\overline{\chi})\in \RPP^2$.
		In addition, define the parameters
		\begin{equation}\label{def:murho1}
			\hat{\mu}=
            %\underline{\chi}
            \min\left\{\nu_A,\frac{\sigma\nu_B}{\tau}\right\}, \   \hat{\rho}=\begin{cases}
				%\overline{\chi}
                \min\left\{0,\rho_{A,B},\frac{\tau}{\sigma}\mu_{A,B}\right\}, & \textnormal{ if } \min\{\rho_A+\rho_B,\mu_A+\mu_B\}>0\\
				%\overline{\chi}
                \min\left\{0,\frac{\tau}{\sigma}\mu_{A,B}\right\}, & \textnormal{ if } \rho_A=\rho_B=0 \textnormal{ and } \mu_B+\mu_A>0\\
				%\overline{\chi}
                \min\left\{0,\rho_{A,B}\right\}, & \textnormal{ if } \mu_A=\mu_B=0 \textnormal{ and }\rho_A+\rho_B>0
				\\	0, & \textnormal{ if } \mu_A=\mu_B=\rho_B=\rho_A=0
			\end{cases}
		\end{equation}
        with $\rho_{A,B} = \rho_A \rho_B/(\rho_A+\rho_B)$ if $\rho_A+\rho_B >0$, and
        $\mu_{A,B} = \mu_A \mu_B/(\mu_A+\mu_B)$ if $\mu_A+\mu_B>0$.
		Then, $\bm{A}
		$ is $\left(\underline{\chi}\hat{\mu}\bm{S},\overline{\chi}\hat{\rho}\bm{S}^{-1}\right)$-semimonotone on $\pmb{\mathbb{S}}$.
	\end{prop}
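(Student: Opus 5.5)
We fix $\x^*=(x^*,u^*)\in\bm{Z}$ and write the point of $\pmb{\mathbb{S}}$ as $(\x^*,-\bm{C}\x^*)$. Take an arbitrary $(\x,\bm{w})=((z,v),(w_1,w_2))\in\gra\bm{A}$. By definition of $\bm{A}=\bm{B}+\bm{L}$, we have $w_1=a+L^*v$ with $a\in(A+D)z$ and $w_2=b-Lz$ with $b\in B^{-1}v$, i.e. $v\in Bb$. On the solution side, $-\bm{C}\x^*=(-Cx^*,0)$ gives $a^*:=-Cx^*-L^*u^*\in(A+D)x^*$ and $b^*:=Lx^*\in B^{-1}u^*$, i.e. $(x^*,a^*)$ lies in $\pmb{\mathbb{S}}_1$ and $(Lx^*,u^*)$ lies in $\pmb{\mathbb{S}}_2$. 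The skew operator $\bm{L}$ contributes nothing to the pairing, since $\scal{z-x^*}{L^*(v-u^*)}-\scal{v-u^*}{L(z-x^*)}=0$. Hence
\[
\scal{\x-\x^*}{\bm{w}+\bm{C}\x^*}=\scal{z-x^*}{a-a^*}+\scal{v-u^*}{b-b^*}.
\]
Using Assumption~\ref{assume:1}\ref{eq:monAassume1} and \ref{eq:monBassume1b}, we will lower bound this by
\[
\nu_A\|z-x^*\|^2+\mu_A\|L(z-x^*)\|^2+\rho_A\|a-a^*\|^2+\mu_B\|b-b^*\|^2+\nu_B\|v-u^*\|^2+\rho_B\|L^*(v-u^*)\|^2 .
\]

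The key step is to regroup these terms into the components of $\bm{w}+\bm{C}\x^*$, namely $\Delta_1=(a-a^*)+L^*(v-u^*)$ and $\Delta_2=(b-b^*)-L(z-x^*)$. The pair $\rho_A\|a-a^*\|^2+\rho_B\|L^*(v-u^*)\|^2$ will be combined via Lemma~\ref{lem:des} (with $M=\id$) into at least $\rho_{A,B}\|\Delta_1\|^2$ when $\rho_A+\rho_B>0$. Likewise $\mu_B\|b-b^*\|^2+\mu_A\|L(z-x^*)\|^2\ge\mu_{A,B}\|\Delta_2\|^2$ when $\mu_A+\mu_B>0$; here we apply Lemma~\ref{lem:des} to $x=b-b^*$ and $y=-L(z-x^*)$, where the sign of $y$ is harmless because only $\|y\|^2$ enters. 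In the degenerate cases of Assumption~\ref{assume:1}\ref{eq:assume12} the corresponding terms simply vanish, which is what produces the four branches in the definition of $\hat{\rho}$. The remaining terms give $\nu_A\|z-x^*\|^2+\nu_B\|v-u^*\|^2\ge\hat{\mu}\big(\|z-x^*\|^2+\tfrac{\tau}{\sigma}\|v-u^*\|^2\big)$, because $\nu_B\ge\hat{\mu}\tau/\sigma$.

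For the $\Delta$ part, $\rho_{A,B}\|\Delta_1\|^2+\mu_{A,B}\|\Delta_2\|^2\ge\hat{\rho}\big(\|\Delta_1\|^2+\tfrac{\sigma}{\tau}\|\Delta_2\|^2\big)$ holds since $\hat{\rho}\le\min\{0,\rho_{A,B}\}$ and $\hat{\rho}\,\sigma/\tau\le\mu_{A,B}$ (the latter because $\hat{\rho}\le\tfrac{\tau}{\sigma}\mu_{A,B}$). We take the minimum with $0$ so that replacing a coefficient by a smaller one only weakens the bound. Finally we invoke \eqref{eq:ineqS}. Since $\hat{\mu}\ge0$ (because $\nu_A,\nu_B\ge0$), the first inequality gives $\hat{\mu}(\|z-x^*\|^2+\tfrac{\tau}{\sigma}\|v-u^*\|^2)\ge\underline{\chi}\hat{\mu}\|\x-\x^*\|^2_{\bm{S}}$. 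Since $\hat{\rho}\le0$, the second inequality, multiplied by $\hat{\rho}$, reverses and gives $\hat{\rho}(\|\Delta_1\|^2+\tfrac{\sigma}{\tau}\|\Delta_2\|^2)\ge\overline{\chi}\hat{\rho}\|\bm{w}+\bm{C}\x^*\|^2_{\bm{S}^{-1}}$. This yields exactly the claimed $(\underline{\chi}\hat{\mu}\bm{S},\overline{\chi}\hat{\rho}\bm{S}^{-1})$-semimonotonicity on $\pmb{\mathbb{S}}$.

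The main obstacle is the case bookkeeping around Lemma~\ref{lem:des}. That lemma needs $\rho+\beta>0$, so the zero-sum cases must be handled separately using the implication in Assumption~\ref{assume:1}\ref{eq:assume12}. We must also check that $\mu_{A,B}$ may be negative (when one of $\mu_A,\mu_B$ is) without invalidating the lemma, which it does not as long as the sum is positive. Finally, we must verify that the signs $\hat{\mu}\ge0$ and $\hat{\rho}\le0$ go in the right direction when passing to the $\bm{S}$-norms through \eqref{eq:ineqS}.
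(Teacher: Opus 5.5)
Your proposal is correct and follows essentially the same route as the paper. The paper cancels the skew part $\bm{L}$, applies the semimonotonicity of $A+D$ on $\pmb{\mathbb{S}}_1$ and of $B$ on $\pmb{\mathbb{S}}_2$, and merges the cross terms with Lemma~\ref{lem:des}. It then lowers the coefficients to $\hat{\mu}$ and $\hat{\rho}$ and passes to the $\bm{S}$-norms via \eqref{eq:ineqS}, using $\hat{\mu}\ge 0\ge\hat{\rho}$. Your explicit treatment of the degenerate cases via Assumption~\ref{assume:1}\ref{eq:assume12} fills in what the paper dismisses as ``treated analogously.''
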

	\begin{proof} 
		Let $(\bm{x},\bm{u})=((z,v), (x,u)) \in \gra\bm{A}$ and let $\bm{x}^*=(z^*,v^*) \in \zer (\bm{A}+\bm{C})$, thus $(\x^*,-\bm{C}\x^*) \in  \pmb{\mathbb{S}}$. We have then $x-L^*v \in (A+D)z$, $u +Lz\in B^{-1}v$, $-Cz^*-L^*v^*\in (A+D)z^*$, and $Lz^* \in B^{-1}v^*$. 
		First, suppose that $\min\{\rho_A+\rho_B,\mu_B+\mu_A\}>0$. It follows from Assumption~\ref{assume:1}\ref{eq:monAassume1}-\ref{eq:monBassume1b}, Lemma~\ref{lem:des}, and \eqref{eq:ineqS} that
		\begin{align*}
			\scal{\bm{x}-\x^*}{\bm{u}+\bm{C}\x^*}&= \scal{(z-z^*,v-v^*)}{(x+Cz^*,u)}\\
			&= \scal{z-z^*}{x+Cz^*} +\scal{v-v^*}{u}\\
			&=\scal{z-z^*}{x-L^*v+Cz^*+L^*v^*}+\scal{v-v^*}{u+Lz-Lz^*}\\
			&\geq \nu_A\|z-z^*\|^2+\mu_A\|z-z^*\|^2_{L^*L}+\rho_A\|x-L^*v+Cz^*+L^*v^*\|^2\\
			& \hspace{1cm}+\nu_B\|v-v^*\|^2+\rho_B\|v-v^*\|^2_{LL^*}+\mu_B\|u+Lz-Lz^*\|^2\\
			&  \geq \nu_A\|z-z^*\|^2+\nu_B\|v-v^*\|^2\\
			& \hspace{1cm}+\frac{\rho_A\rho_B}{\rho_A+\rho_B}\|x+Cz^*\|^2+\frac{\mu_A\mu_B}{\mu_B+\mu_A}\|u\|^2\\
			&  \geq \min\{\nu_A,\sigma\nu_B/\tau\}\left(\|z-z^*\|^2+\frac{\tau}{\sigma}\|v-v^*\|^2\right)\\
			& \hspace{1cm}+\min\left\{0,\frac{\rho_A\rho_B}{\rho_A+\rho_B},\frac{\tau \mu_A\mu_B}{\sigma(\mu_B+\mu_A)}\right\}\left(\|x+Cz^*\|^2+\frac{\sigma}{\tau}\|u\|^2\right)\\
			&  \geq \underline{\chi}\hat{\mu}\|\x-\x^*\|^2_{\bm{S}}+\overline{\chi}\hat{\rho}\|\bm{u}+\bm{C}\x^*\|_{\bm{S}^{-1}}^2.
		\end{align*}
		The remaining cases are treated analogously.
	\end{proof}
	\begin{prop}\label{prop:STT}
		In the context of Notation \ref{no:Sec4} and Assumption~\ref{assume:1}, let $(\sigma,\tau) \in \RPP^2$ be such that $\sigma\tau\|L\|^2<1$, and consider the operators
		\begin{equation}\label{def:S1}
			\begin{aligned}
				&\bm{S} \colon \bm{\H} \to {\bm{\H}} \colon (z,v) \mapsto (z-\tau L^*v,-\tau L z + \tau v/\sigma)\\
				&T_1 \colon\H \to \H \colon z \mapsto (\id-\sigma\tau L^*L)z\\
				&T_2 \colon\G \to \G \colon v \mapsto (\id-\sigma\tau LL^*)v.
			\end{aligned}
		\end{equation}
		The following assertions hold:
		\begin{enumerate}
			\item\label{prop:STT1} $T_1$, $T_2$, and $\bm{S}$ are positive definite, self-adjoint, and strongly monotone linear operators. Furthermore,
			\begin{equation}
				\big(\forall (z,v) \in \bm{\H}\big) \quad \bm{S}^{-1}(z,v)=\big(T^{-1}_1(z+\sigma L^*v),\sigma T^{-1}_2(Lz+v/\tau)\big).
			\end{equation}
			\item\label{prop:STT2} $L\circ T_1^{-1}=T_2^{-1} \circ L$ and $L^*\circ T_2^{-1}=T_1^{-1} \circ L^*$.
			\item\label{prop:STT3} For every $(z,v) \in \bm{\H}$, 
			\begin{equation}\label{eq:STT31}
				\|(z,v)\|^2_{\bm{S}^{-1}} = \|z+\sigma L^*v\|^2_{T_1^{-1}}+\frac{\sigma}{\tau}\|v\|^2=\|z\|^2+\frac{\sigma}{\tau}\|v+\tau Lz\|^2_{T_2^{-1}}.
			\end{equation}
			%In particular,  
            In addition,
            \begin{equation} \label{eq:STT32}
				\|z\|^2 +\frac{\sigma}{\tau}\|v\|^2 \leq (1+{\sqrt{\sigma\tau} \|L\|)}\|(z,v)\|^2_{\bm{S}^{-1}}.
			\end{equation}
			\item\label{prop:STT4}  For every  $(z,v) \in \bm{\H}$, 
			\begin{equation}\label{eq:STT4}
				\|(z,v)\|^2_{\bm{S}} \leq (1+\sqrt{\sigma \tau}\|L\|)\left(\|z\|^2+\frac{\tau}{\sigma}\|v\|^2\right).
			\end{equation}
				\item\label{prop:STT5}  $\bm{A}$ is $\left(\underline{\chi}\hat{\mu}\bm{S},\overline{\chi}\hat{\rho}\bm{S^{-1}}\right)$-semimonotone on $\pmb{\mathbb{S}}$, where $\hat{\mu}$ and $\hat{\rho}$ are defined in \eqref{def:murho1} with  $\overline{\chi}=1+\sqrt{\sigma\tau} \|L\|$ and $\underline{\chi}=\overline{\chi}^{-1}$.
%                and $\overline{\chi}=1+\sigma\tau \|L\|^2$.
			\end{enumerate}
		\end{prop}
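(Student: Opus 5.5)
Assertions~\ref{prop:STT1}--\ref{prop:STT4} are linear-algebra facts about $\bm{S}$, and \ref{prop:STT5} follows from them and Proposition~\ref{prop:ursemimongen}. I would take them in order.

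For \ref{prop:STT1}, self-adjointness of $T_1$, $T_2$ and $\bm{S}$ is immediate from the definitions. Strong monotonicity of $T_1$ and $T_2$ follows from $\scal{T_1z}{z}\geq(1-\sigma\tau\|L\|^2)\|z\|^2$ and the analogous bound for $T_2$. For $\bm{S}$, I write
\begin{equation*}
\scal{\bm{S}(z,v)}{(z,v)}=\|z\|^2-2\tau\scal{Lz}{v}+\frac{\tau}{\sigma}\|v\|^2
\end{equation*}
and bound the cross term by Young's inequality $2\tau|\scal{Lz}{v}|\le \sqrt{\sigma\tau}\|L\|(\|z\|^2+\frac{\tau}{\sigma}\|v\|^2)$. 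This gives the lower bound $(1-\sqrt{\sigma\tau}\|L\|)(\|z\|^2+\frac{\tau}{\sigma}\|v\|^2)$ and the upper bound $(1+\sqrt{\sigma\tau}\|L\|)(\cdots)$, so \ref{prop:STT4} comes out at the same time. The inverse formula is checked directly by applying $\bm{S}$ to the proposed expression, using \ref{prop:STT2}. Assertion \ref{prop:STT2} itself follows from $L(\id-\sigma\tau L^*L)=(\id-\sigma\tau LL^*)L$ by multiplying by the inverses on both sides, and then taking adjoints.

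For \ref{prop:STT3}, I would compute $\scal{(z,v)}{\bm{S}^{-1}(z,v)}$ from the formula in \ref{prop:STT1} and complete squares. The first identity in \eqref{eq:STT31} is obtained by expanding $\|z+\sigma L^*v\|^2_{T_1^{-1}}$. The key relation here is $\sigma\|v\|^2_{L T_1^{-1}L^*}+\frac{\sigma}{\tau}\|v\|^2=\frac{\sigma}{\tau}\|v\|^2_{T_2^{-1}}$, which holds because $T_2^{-1}=\id+\sigma\tau LT_1^{-1}L^*$; this is a push-through identity proved with \ref{prop:STT2}. The second identity in \eqref{eq:STT31} is symmetric.

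For \eqref{eq:STT32}, a clean route is duality. Since $\|\cdot\|_{\bm{S}^{-1}}$ is the dual norm of $\|\cdot\|_{\bm{S}}$, the upper bound in \ref{prop:STT4}, namely $\bm{S}\preceq(1+\sqrt{\sigma\tau}\|L\|)P$ with $P=\mathrm{diag}(\id,\frac{\tau}{\sigma}\id)$, yields $\bm{S}^{-1}\succeq(1+\sqrt{\sigma\tau}\|L\|)^{-1}P^{-1}$. Here $P^{-1}=\mathrm{diag}(\id,\frac{\sigma}{\tau}\id)$, and this is exactly \eqref{eq:STT32}. The inversion of a Loewner inequality between positive definite operators is standard.

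Finally, \ref{prop:STT5} follows from Proposition~\ref{prop:ursemimongen}. Condition \eqref{eq:ineqS} holds with $\overline{\chi}=1+\sqrt{\sigma\tau}\|L\|$ by \eqref{eq:STT32}, and with $\underline{\chi}=\overline{\chi}^{-1}$ by \eqref{eq:STT4}. The main obstacle is the bookkeeping in \ref{prop:STT3}, specifically verifying the push-through identity. The rest is routine.
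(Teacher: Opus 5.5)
Your proposal is correct. For (ii), the two identities in (iii), (iv) and (v) it matches the paper's proof. The paper also proves (iv) by Young's inequality with weight $\sqrt{\sigma\tau}\|L\|$, and deduces (v) from \eqref{eq:STT32}, \eqref{eq:STT4} and Proposition~\ref{prop:ursemimongen} exactly as you do.

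The genuine differences are in (i) and in \eqref{eq:STT32}.

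For (i), the paper just cites Lemma~6.1 of Morin--Banert--Giselsson. Your two-sided Young bound makes this self-contained and gives, in one stroke, the strong monotonicity of $\bm{S}$ (with constant $(1-\sqrt{\sigma\tau}\|L\|)\min\{1,\tau/\sigma\}$) and (iv). Your check of the inverse formula via (ii) is not circular, since (ii) only uses invertibility of $T_1$ and $T_2$.

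For \eqref{eq:STT32}, the paper argues by hand. It sets $(z',v')=\bm{S}^{-1}(z,v)$, $u=\sqrt{\tau/\sigma}\,v'$ and $\omega=\sqrt{\sigma\tau}\|L\|$, writes $\|(z,v)\|^2_{\bm{S}^{-1}}=\|(z',v')\|^2_{\bm{S}}$, and expands $\|z\|^2+\frac{\sigma}{\tau}\|v\|^2$ in terms of $(z',u)$. Cauchy--Schwarz then shows that the gap $(1+\omega)\|(z,v)\|^2_{\bm{S}^{-1}}-\|z\|^2-\frac{\sigma}{\tau}\|v\|^2$ is at least $(1-\omega)\omega(\|z'\|-\|u\|)^2\geq 0$.

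You instead recognize \eqref{eq:STT32} as the dual of \eqref{eq:STT4}. From $\bm{S}\preceq\overline{\chi}P$ you get $\bm{S}^{-1}\succeq\overline{\chi}^{-1}P^{-1}$, either by antitonicity of inversion on positive definite operators or from $\|y\|^2_{\bm{S}^{-1}}=\sup_{x}\{2\scal{x}{y}-\|x\|^2_{\bm{S}}\}$. Your route is shorter and explains why the same constant $\overline{\chi}$ appears in both bounds. The paper's route stays fully elementary, at the price of a longer computation that in effect reproves this duality for the specific $\bm{S}$.

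One small slip: in your key relation for (iii), the coefficient should be $\sigma^2$, not $\sigma$. Expanding $\|z+\sigma L^*v\|^2_{T_1^{-1}}$ produces $\sigma^2\scal{T_1^{-1}L^*v}{L^*v}$. Multiplying $T_2^{-1}=\id+\sigma\tau LT_1^{-1}L^*$ by $\sigma/\tau$ gives $\frac{\sigma}{\tau}\scal{T_2^{-1}v}{v}=\frac{\sigma}{\tau}\|v\|^2+\sigma^2\scal{T_1^{-1}L^*v}{L^*v}$, which is what is actually needed. With this correction your computation goes through.
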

		\begin{proof}\ 
			\begin{enumerate}
				\item The first assertion follows directly from existing results, see, e.g., \cite[Lemma~6.1]{MorinBanertGiselsson2022}.
				\item Note that $LT_1=T_2L$, thus,
				\begin{align*}
					(\forall z \in \H) \quad Lz=LT_1T_1^{-1}z						=T_2LT_1^{-1}z.    
				\end{align*}
				Then, we conclude that $T_2^{-1}Lz=LT_1^{-1}z$. The second identity is deduced similarly.

				\item For every $(z,v) \in \bm{\H}$,
				\begin{align*}
					\|(z,v)\|^2_{\bm{S}^{-1}} &= \scal{\bm{S}^{-1}(z,v)}{(z,v)}\\
					&=\scal{T_1^{-1}(z+\sigma L^*v)}{z}+\frac{\sigma}{\tau}\scal{T^{-1}_2(v+\tau Lz)}{v}\\
					&=\scal{T_1^{-1}(z+\sigma L^*v)}{z+\sigma L^*v}-\sigma\scal{T_1^{-1}(z+\sigma L^*v)}{L^*v}+\frac{\sigma}{\tau}\scal{T^{-1}_2(v+\tau Lz)}{v}\\
					&=\|z+\sigma L^*v\|^2_{T_1^{-1}}-\sigma\scal{T_1^{-1}(z+\sigma L^*v)}{L^*v}+\frac{\sigma}{\tau}\scal{T^{-1}_2(v+\tau Lz)}{v}\\
					&\overset{\ref{prop:STT2}}{=}\|z+\sigma L^*v\|^2_{T_1^{-1}}-\sigma\scal{T_2^{-1}L(z+\sigma L^*v)}{v}+\frac{\sigma}{\tau}\scal{T^{-1}_2(v+\tau Lz)}{v}\\
					&=\|z+\sigma L^*v\|^2_{T_1^{-1}}+\frac{\sigma}{\tau}\scal{T^{-1}_2(v-\sigma\tau LL^*v)}{v} \\
					&\overset{\eqref{def:S1}}{=}\|z+\sigma L^*v\|^2_{T_1^{-1}}+\frac{\sigma}{\tau}\|v\|^2.
				\end{align*}
				The identity $  \|(z,v)\|^2_{\bm{S}^{-1}} = \|z\|^2+\frac{\sigma}{\tau}\|v+\tau Lz\|^2_{T_2^{-1}}$ is  deduced in a similar manner. 

                Let $(z',v') = \bm{S}^{-1}(z,v)$. 
                Set $\omega = \sqrt{\sigma\tau}\|L\|$ and $u=\sqrt{\tau/\sigma}v'$.We have then
                \begin{align*}
                \|z\|^2+\frac{\sigma}{\tau}\|v\|^2
                &=
                \|z'-\tau L^*v'\|^2+\sigma \tau\left\|-Lz'+\frac{v'}{\sigma}\right\|^2\\
                &= \|z'\|^2-2\tau\scal{Lz'}{v'}
                + \tau^2\|L^*v'\|^2+\sigma\tau \|Lz'\|^2-2\tau\scal{Lz'}{v'}+\frac{\tau}{\sigma}\|v'\|^2\\
                &\le (1+\omega^2)
                \|z'\|^2-4\sqrt{\sigma\tau} \scal{Lz'}{u}+(1+\omega^2) \|u\|^2
\end{align*}
and 
\begin{equation*}
                \|(z,v)\|^2_{\bm{S}^{-1}}
                = \|(z',v')\|^2_{\bm{S}}
                = \|z'\|^2-2\sqrt{\sigma\tau} \scal{Lz'}{u}+\|u\|^2.
\end{equation*}
Using Cauchy-Schwarz inequality yields
\begin{align*}
&(1+\omega)\|(z,v)\|^2_{\bm{S}^{-1}}-\|z\|^2-\frac{\sigma}{\tau}\|v\|^2\\
&\geq (1-\omega)\big(\omega\|z'\|^2
+2\sqrt{\sigma\tau}
\scal{Lz'}{u}+\omega \|u\|^2\big)\\
&\geq (1-\omega)\big(\omega\|z'\|^2
-2\sqrt{\sigma\tau} \|Lz'\|\|u\|
+\omega \|u\|^2\big)\\
&\geq (1-\omega)\omega(\|z'\|-\|u\|)^2.
\end{align*}
Since $\omega\in [0,1[$, we deduce that
\begin{equation}
\|z\|^2+\frac{\sigma}{\tau}\|v\|^2
\leq 
(1+\omega)\|(z,v)\|^2_{\bm{S}^{-1}}. 
\end{equation}
%\begin{align*}
%                 &\le (1+\sigma\tau \|L\|^2)
%                \left(\|z'\|^2-2\tau \scal{Lz'}{v'}+\frac{\tau}{\sigma}\|v'\|^2\right)\\
%                & = (1+\sigma\tau \|L\|^2) \|(z',v')\|^2_{\bm{S}}\\
%                & = (1+\sigma\tau \|L\|^2) 
%                \|(z,v)\|^2_{\bm{S}^{-1}},
%                \end{align*}
%                where, in the last inequality, we have used the fact that 
%                \begin{equation}
%                    \sigma\tau\|L\|^2 \le  %1
%                    \quad \Leftrightarrow \quad \frac{2}{1+\sigma\tau\|L\|^2} \geq 1.
%                \end{equation}
				\item   Assume that $L\neq 0$. For every $(z,v) \in \bm{\H}$ and $\delta \in ]0,+\infty[$,    \begin{align*}
					\|(z,v)\|^2_{\bm{S}} &= \scal{(z-\tau L^*v,\tau v/\sigma -\tau Lz)}{(z,v)}\\
 					&=\|z\|^2-2\tau\scal{Lz}{v}+\frac{\tau}{\sigma}\|v\|^2\\
					&\leq \|z\|^2+2\tau\|L\|\|z\|\|v\|+\frac{\tau}{\sigma}\|v\|^2\\
                    & \leq \|z\|^2+\delta\|z\|^2+\frac{\tau^2\|L\|^2}{\delta}\|v\|^2+\frac{\tau}{\sigma}\|v\|^2\\
                    &\leq \max\left\{1+\delta,1+\frac{\sigma\tau\|L\|^2}{\delta}\right\}                    \left(\|z\|^2+\frac{\tau}{\sigma}\|v\|^2\right).
				\end{align*}
                The tightest value of the bound is obtained by setting $\delta = \sqrt{\sigma \tau}\|L\|$, which yields
\begin{equation}
\|(z,v)\|^2_{\bm{S}} \le 
(1+\sqrt{\sigma \tau}\|L\|) \left(\|z\|^2+\frac{\tau}{\sigma}\|v\|^2\right).
\end{equation}
This inequality remains valid when $L=0$.
				\item The last assertion follows directly from \ref{prop:STT3}, \ref{prop:STT4}, and Proposition~\ref{prop:ursemimongen}. 
			\end{enumerate}
		\end{proof}
\begin{prop}\label{prop:singlevalued}
    In the context of Notation \ref{no:Sec4} and Assumption~\ref{assume:1}, let $(\tau,\sigma) \in \Omega_A\times\Omega_B$.
    Define
			\begin{equation}\label{eq:defMPD}
				\bm{M}\colon \bm{\H} \to {\bm{\H}} \colon (z,v) \mapsto (z/\tau-D z-L^*v,-Lz+v/\sigma).
			\end{equation}
            Then $(\bm{M}+\bm{A})^{-1}$ is single-valued and has full domain. Moreover, 
            \begin{equation}\label{eq:resPD}
                (\forall (x,w) \in \bm{\H}) \quad (\bm{M}+\bm{A})^{-1}(x,w) = \left(J_{\tau A}(\tau x),J_{\sigma B^{-1}}\big(\sigma( w + 2 LJ_{\tau A} (\tau x))\big)\right). 
            \end{equation}
\end{prop}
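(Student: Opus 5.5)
The plan is to solve the inclusion $(x,w)\in(\bm{M}+\bm{A})(z,v)$ explicitly and show that it has exactly one solution $(z,v)$ for every $(x,w)\in\bm{\H}$. By \eqref{def:operators} and \eqref{eq:defMPD}, $\bm{M}+\bm{A}=\bm{M}+\bm{B}+\bm{L}$. In the first component, $\bm{M}$ contributes $z/\tau-Dz-L^*v$ and $\bm{A}$ contributes $(A+D)z+L^*v$. The $D$ terms and the $L^*v$ terms cancel, so the first component equals $z/\tau+Az$. In the second component, $\bm{M}$ contributes $-Lz+v/\sigma$ and $\bm{A}$ contributes $B^{-1}v-Lz$, which sum to $v/\sigma+B^{-1}v-2Lz$. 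The inclusion is therefore equivalent to the triangular system
\begin{equation*}
x\in z/\tau+Az,\qquad w\in v/\sigma+B^{-1}v-2Lz.
\end{equation*}

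We then solve the system one line at a time. Multiplying the first relation by $\tau$ gives $\tau x\in(\id+\tau A)z$. Since $\tau\in\Omega_A$, Assumption~\ref{assume:1}\ref{eq:monAassume2} says that $J_{\tau A}$ is single-valued with full domain. Hence $z=J_{\tau A}(\tau x)$, and this is the only possible $z$. Substituting this $z$ into the second relation and multiplying by $\sigma$ gives $\sigma(w+2Lz)\in(\id+\sigma B^{-1})v$. Since $\sigma\in\Omega_B$, the resolvent $J_{\sigma B^{-1}}$ is single-valued with full domain. Hence $v=J_{\sigma B^{-1}}\big(\sigma(w+2LJ_{\tau A}(\tau x))\big)$ is uniquely determined.

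Every $(x,w)$ thus has exactly one preimage $(z,v)$, which proves that $(\bm{M}+\bm{A})^{-1}$ is single-valued with full domain and yields formula \eqref{eq:resPD}. The only point needing care is the cancellation of $D$ and $L^*v$ in the first component; it is exactly what decouples $z$ from $v$ and makes the system triangular. I do not expect any genuine obstacle beyond this bookkeeping, since no monotonicity of $\bm{A}$ is used.
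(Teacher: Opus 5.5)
Your proposal is correct and follows essentially the same route as the paper. Like the paper, you compute $(\bm{M}+\bm{A})(z,v)=(Az+z/\tau)\times(B^{-1}v+v/\sigma-2Lz)$, use the resulting triangular structure, and solve first for $z=J_{\tau A}(\tau x)$ and then for $v$, relying on the single-valuedness and full domain of $J_{\tau A}$ and $J_{\sigma B^{-1}}$ from Assumption~\ref{assume:1}\ref{eq:monAassume2}.
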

\begin{proof}
It follows from \eqref{def:operators} and \eqref{eq:defMPD} that
\begin{equation*}
  (\forall (z,v) \in \bm{\H}) \quad  (\bm{M}+\bm{A})(z,v) = (Az+z/\tau) \times (B^{-1}v+v/\sigma -2Lz).
\end{equation*}
Therefore, given $(z,v) \in \bm{\H}$ and $(x,w) \in \bm{H}$,  $(z,v) \in (\bm{M}+\bm{A})^{-1}(x,w)$ if and only if $x\in 
(\tau^{-1}\id+A)z$ and $w+2Lz \in (\sigma^{-1}\id+B^{-1})v$. Since $J_{\tau A}$ and $J_{\sigma B^{-1}}$ are single-valued, we conclude that $z = J_{\tau A} (\tau x)$ and $v=J_{\sigma B^{-1}}\big(\sigma (w + 2 LJ_{\tau A} (\tau x))\big)$. Since $J_{\tau A}$ and $J_{\sigma B^{-1}}$ are single-valued and have full domain, \eqref{eq:resPD} shows that that $(\bm{M}+\bm{A})^{-1}$ is also single-valued and has full domain. 
\end{proof}
		
		\subsection{Algorithm and convergence analysis}
        In this subsection, we will show that the following primal-dual algorithm provides a numerical method for solving Problem~\ref{pro:main}.
        \begin{algo}\label{algo:MTPD} In the context of 
			Problem~\ref{pro:main} and Assumption~\ref{assume:1}, let $(\tau,\sigma)
			\in \Omega_A\times \Omega_B$, let $\theta \in ]0,2[$, let $(z_{-1},z_0,p_0,v_0) \in \H^3\times\G$, and set $y_{-1}=D 
			z_{-1}$. Consider the iteration:
			\begin{equation}
				\label{e:algonMTPD}
				(\forall n \in \N) \quad \left\lfloor
				\begin{aligned}
					&y_{n} =  D z_{n}\\
					&x_{n} = z_{n}-\tau (y_{n}-y_{n-1}+{Dp_n}+Cz_{n}+L^*v_n)\\
					&p_{n+1} = J_{\tau A} x_{n}\\
					&w_{n} = v_n +\sigma L(2{p_{n+1}}-z_n)\\
					&q_{n+1} = J_{\sigma B^{-1}}w_{n}\\
                    &(z_{n+1},v_{n+1})=(1-\theta)(z_{n},v_{n})+\theta (p_{n+1},q_{n+1}).
				\end{aligned}
				\right.
			\end{equation} 
		\end{algo}

        Our main result concerning the convergence of the above algorithm is stated next.
		\begin{teo}\label{teo:MMPD}
			In the context of Problem~\ref{pro:main} and  
			Assumption~\ref{assume:1}, consider the sequence
            $(z_{n},v_n)_{n\in\N}$ generated by 
			Algorithm~\ref{algo:MTPD} where
            $(\tau,\sigma) \in \Omega_A\times\Omega_B$ is such that $\kappa = 1-\sigma\tau\|L\|^2>0$.  Let $\hat{\rho}\le 0$ be defined by \eqref{def:murho1} and let $\overline{\chi}=1+\sqrt{\sigma\tau} \|L\|$. Moreover
            %set  $\zeta = %\frac{\tau\vartheta_{K}}{\kappa}$, 
            %$\zeta_{\bm{M}}=\frac{\zeta+1}{\tau}$, 
            %and 
            define
            
            		\begin{equation}
                    \label{e:defetatheta}
			%\bm{\xi}_{n} &= \bm{S}^{-1}\big(\tau\bm{M}\textcolor{blue}{\p_{n}}-(\tau\bm{M}-\bm{S})\x_{n-1}\big),\label{e:defbxi}\\
            	\eta_\theta = \begin{cases}
		1, &\textnormal{ if } \theta \in [1,2[ \textnormal{ and } D \textnormal{ is monotone,}\\
		1+|1-\theta|, &\textnormal{ otherwise,} 
            \end{cases}
            \end{equation} 
            
%           $\eta_\theta = 1$, if $\theta \in [1,2[$, and $\eta_\theta = 2-\theta$, otherwise.
            Suppose that
            $\beta\kappa+\overline{\chi}\hat{\rho}(1+\vartheta/\kappa)>0$ and
			\begin{equation}\label{eq:stepsizesteoPD}
				\tau\Big(2-\theta  - 2\tau\frac{\eta_\theta\vartheta}{\kappa} \Big)>%2\tau^2\frac{\vartheta}{\kappa}+
                \frac{(\tau+2\overline{\chi}\hat{\rho})^2}{2(\beta\kappa+\overline{\chi}\hat{\rho}(1+\vartheta/\kappa))} -2\overline{\chi}\hat{\rho}
            \left( \left(2\tau\frac{\vartheta}{\kappa}+1\right)^2+4\tau^2\frac{\vartheta}{\kappa}\right).
			\end{equation}
			Then the following hold.
			\begin{enumerate}
				\item\label{teo:MTMMPD1}  The sequence $(z_{n},v_n)_{n\in\N}$  converges 
				weakly to a solution to Problem~\ref{pro:main}.
				\item\label{teo:MTMMPD2} If $\nu_A>0$ and $\nu_B>0$, then $(z_n,v_n)_{n\in \N}$
				converges R-linearly to the unique solution to
				Problem~\ref{pro:main}.
			\end{enumerate}
		\end{teo}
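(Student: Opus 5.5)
The plan is to recast Algorithm~\ref{algo:MTPD} as an instance of Algorithm~\ref{algo:NLFBMM} on $\bm{\H}=\H\times\G$. We use the operators $\bm{A},\bm{C}$ of Notation~\ref{no:Sec4}, the metric $\bm{S}$ of Proposition~\ref{prop:STT}, and the nonlinear kernel $\bm{M}$ of Proposition~\ref{prop:singlevalued}. Once this is done, we verify Assumption~\ref{assum:2} and show that condition \eqref{eq:stepsizesteoPD} is exactly $\lambda>0$ in \eqref{e:defmutn}. Theorem~\ref{teo:NLFBMM} then gives both conclusions.

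\textbf{Step 1: identification.} Write $\bm{x}_n=(z_n,v_n)$ and $\bm{p}_{n+1}=(p_{n+1},q_{n+1})$. We have $\tau\bm{M}-\bm{S}\colon(z,v)\mapsto(-\tau Dz,\,0)$, since the $L$ and $v/\sigma$ parts of $\tau\bm{M}$ and $\bm{S}$ cancel. Hence \eqref{eq:NLFBMM-b} gives $\bm{u}_n=(-\tau(Dp_n-Dz_{n-1}),0)$. Using the formula \eqref{eq:resPD} for $(\bm{M}+\bm{A})^{-1}$, the first component of $\bm{M}\bm{x}_n-\bm{C}\bm{x}_n+\bm{u}_n/\tau$, multiplied by $\tau$, is $z_n-\tau Dz_n-\tau L^*v_n-\tau Cz_n-\tau(Dp_n-Dz_{n-1})$. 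This matches $x_n$ in \eqref{e:algonMTPD}, up to the bookkeeping of $y_n$; the $y_{n-1}$ and $Dp_n$ terms should be checked against the sign convention, possibly via a harmless reindexing of the initialization. For the second component, $\sigma(-Lz_n+v_n/\sigma+2Lp_{n+1})$ gives $w_n$. The relaxation step is \eqref{eq:NLFBMM-c}.

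\textbf{Step 2: verifying Assumption~\ref{assum:2}.} Item \ref{assum:21} follows from Proposition~\ref{prop:singlevalued}, and item \ref{assum:22} is Assumption~\ref{assume:1}\ref{assume:1v}. For cocoercivity of $\bm{C}$ with respect to $\bm{S}$, use \eqref{eq:STT31}: $\|(Cz-Cz',0)\|^2_{\bm{S}^{-1}}=\|Cz-Cz'\|^2_{T_1^{-1}}\le\kappa^{-1}\|Cz-Cz'\|^2$. This makes $\bm{C}$ $\beta\kappa$-cocoercive with respect to $\bm{S}$, which explains the $\beta\kappa$ in the statement. Item \ref{assum:23} comes from Proposition~\ref{prop:STT}\ref{prop:STT5}, which gives comonotonicity modulus $\overline{\chi}\hat\rho$ (and strong monotonicity $\underline{\chi}\hat\mu$ for part \ref{teo:MTMMPD2}). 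For item \ref{assum:24}, note $\|(\tau Dz-\tau Dz',0)\|_{\bm{S}^{-1}}\le\tau\kappa^{-1/2}\vartheta\|z-z'\|$, and $\|z-z'\|\le\kappa^{-1/2}\|(z-z',v-v')\|_{\bm{S}}$, because $\bm{S}\succeq$ a block bound with $\|z\|^2_{\bm{S}}\ge\kappa\|z\|^2$ after minimizing over $v$. This yields $\zeta=\tau\vartheta/\kappa$.

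We also need $\zeta_{\bm{M}}$. We take $\tau\zeta_{\bm{M}}\le 1+\zeta$, so the term $(\tau^{-1}\zeta+\zeta_{\bm{M}})^2\tau^2$ becomes $(2\zeta+1)^2=(2\tau\vartheta/\kappa+1)^2$, matching \eqref{eq:stepsizesteoPD}. The monotonicity of $\bm{S}-\tau\bm{M}=(\tau D,0)$ is equivalent to monotonicity of $D$, which matches $\eta_\theta$ with $\eta$ in \eqref{def:nu}.

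\textbf{Step 3: matching the step-size condition.} Substitute $\beta\to\beta\kappa$, $\hat\rho\to\overline{\chi}\hat\rho$, $\zeta=\tau\vartheta/\kappa$ and $\tau\zeta_{\bm{M}}=1+\zeta$ into \eqref{e:defmutn}, then multiply by $\tau$. The hypothesis $\beta\kappa+\overline{\chi}\hat\rho(1+\vartheta/\kappa)>0$ is then precisely $\beta\tau+\hat\rho(\tau+\zeta)>0$, i.e.\ $\tau>\underline{\tau}_\zeta$. Monotonicity of $\lambda$ in $\hat\rho$ and $\beta$ (Remark~\ref{rem:lambda}) justifies using these lower moduli. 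We still need $\zeta<1/2$; this should follow from \eqref{eq:stepsizesteoPD} through Proposition~\ref{prop:overlinezeta}\ref{prop:overlinezetaii}.

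Finally, apply Theorem~\ref{teo:NLFBMM}. Part \ref{teo:NLFBMM1} gives weak convergence to a point of $\zer(\bm{A}+\bm{C})=\bm{Z}$. If $\nu_A,\nu_B>0$, then $\hat\mu>0$ and Theorem~\ref{teo:NLFBMM}\ref{teo:NLFBMM2} gives R-linear convergence. The main obstacle will be Step 1's exact algebraic match, together with the precise Lipschitz constants in the $\bm{S}$-metric, since loose bounds would not reproduce \eqref{eq:stepsizesteoPD} exactly.
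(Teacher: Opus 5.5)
Your proposal is correct and follows the paper's proof essentially step for step. It uses the same recasting of Algorithm~\ref{algo:MTPD} as Algorithm~\ref{algo:NLFBMM}, and your sign computation for $x_n$ is already right: taking $\bm{u}_0=(-\tau Dp_0+\tau Dz_{-1},0)$ gives an exact match, so no reindexing is needed. It also uses the same constants $\beta\kappa$, $\zeta=\tau\vartheta/\kappa$, $\overline{\chi}\hat{\rho}$ and $\underline{\chi}\hat{\mu}$, and the same deduction of $\zeta<1/2$ from $\lambda>0$.

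One imprecision: after majorizing $\tau\zeta_{\bm{M}}$ by $1+\zeta$ (which uses $\hat\rho\le 0$), \eqref{eq:stepsizesteoPD} is a sufficient condition for $\lambda>0$, not an equivalent one. Also, you do not need monotonicity of $\lambda$ in $\beta$ or $\hat{\rho}$, because $\beta\kappa$ and $\overline{\chi}\hat{\rho}$ are directly the moduli entering Assumption~\ref{assum:2}.
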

		\begin{proof} Let $(\tau,\sigma) \in \Omega_A\times \Omega_B$ and
			consider the operators defined in \eqref{def:operators}, operator $\bm{S}$ defined in \eqref{def:S1}, and operator $\bm{M}$ defined in \eqref{eq:defMPD}.
			Let, for every $n \in \N$,  $\x_{n}=(z_n,v_n)$, $\p_{n+1} = (p_{n+1},q_{n+1})$, and $\bm{u}_{n+1}=(\tau \bm{M} - \bm{S}){\p_{n+1}} -(\tau \bm{M} -\bm{S})\x_n$, where
            $\tau \bm{M} - \bm{S}\colon (z,v)\mapsto (-\tau D z,0)$.
			It follows from \eqref{e:algonMTPD} and \eqref{eq:resPD} that 
			\begin{align*}
					(p_{n+1},q_{n+1}) = (J_{\tau A} x_n, J_{\sigma B^{-1}} w_n)
				&\Leftrightarrow 
					\p_{n+1} = (\bm{M}+\bm{A})^{-1}(x_n/\tau,w_n/\sigma-2L{p}_{n+1}).
			\end{align*}	
			Moreover,
			\begin{align*}
				&\begin{cases}
					x_n/\tau = z_{n}/\tau- D z_{n}-{D p_n}+ D z_{n-1}-Cz_{n}-L^*v_n\\
					w_n/\sigma-2 L{p_{n+1}} = v_n/\sigma -Lz_n
				\end{cases}\\
				\Leftrightarrow \quad &(x_n/\tau,	w_n/\sigma-2L{p}_{n+1}) = \bm{M}\x_n-\bm{C}\x_n+\bm{u}_n/\tau.
			\end{align*}
			Therefore, by setting 
            $\bm{u}_0 = (-\tau D p_0+\tau D z_{-1},0)$, we conclude that	
			\begin{equation} 
				(\forall n\in\N)\quad 
				\begin{array}{l}
					\left\lfloor
					\begin{array}{l}
						\p_{n+1} = (\bm{M} + \bm{A})^{-1} (\bm{M}\x_n-\bm{C}\x_n+\bm{u}_n/\tau)\\
						\bm{u}_{n+1} = (\tau \bm{M} - \bm{S}){\p_{n+1}} -(\tau \bm{M} -\bm{S})\x_n\\
                        \x_{n+1}=(1-\theta)\x_n+\theta \p_{n+1},
					\end{array}
					\right.
				\end{array}
			\end{equation}
			which is of the form of Algorithm~\ref{algo:NLFBMM}. In addition, $(\bm{M}+\bm{A})^{-1}$ is single-valued and has full domain by Proposition~\ref{prop:singlevalued} and $\gra(\bm{A}+\bm{C})$ is sequentially  weak-strong closed by Assumption~\ref{assume:1}. According to Proposition~\ref{prop:STT}, $\bm{A}$ is $(\mu\bm{S},\rho\bm{S}^{-1})$-semimonotone on $\pmb{\mathbb{S}}$ with $(\mu,\rho) = (\underline{\chi}\hat{\mu},\overline{\chi}\hat{\rho})\in [0,+\infty[\times ]-\infty,0]$
            with $\underline{\chi}=(1+\sqrt{\sigma\tau} \|L\|)^{-1}$.
            Moreover, it follows from the proof of \cite[Corollary~6.1]{MorinBanertGiselsson2022} that $\bm{C}$ is $(\beta\kappa)$-cocoercive with respect to $\bm{S}$ and $\tau \bm{M} -\bm{S}$ is $\zeta$-Lipschitzian with respect to $\bm{S}$ with $\zeta = \tau\vartheta/\kappa$. 
            In addition, %$\bm{S}-\tau \bm{M} = (\tau D,0)$ is monotone, thus
            $\eta_\theta$ coincides with $\eta$ defined in \eqref{def:nu}. Finally, the condition $\beta\kappa+\overline{\chi}\hat{\rho}(1+\vartheta/\kappa)>0$
            is equivalent to $\tau>-\zeta\rho/(\rho+\beta\kappa)=\underline{\tau}_\zeta$, and 
            implies that
            $\rho> -\beta \kappa$.
            In summary, Assumption~\ref{assum:2} is satisfied provided that $\zeta<1/2$. In addition, 
            according to Remark~\ref{re:firstrem}\ref{re:firstremi}, $\bm{M}$ is $\zeta_{\bm{M}}$-Lipschitzian with respect to $\bm{S}$. The convergence results then follow from Theorem~\ref{teo:NLFBMM} by noticing that, in the considered setting, $\zer (\bm{A}+\bm{C})=\bm{Z}$ and the condition $\lambda > 0$ reads
            \begin{equation}\label{eq:stepsizesteoPDproof}
            \lambda =
				2-\theta-\frac{2\eta_\theta\tau\vartheta}{\kappa}-\frac{(\tau+2\rho)^2}{2\tau(\beta\kappa+\rho(1+\vartheta/\kappa))} +2\rho\tau
            \left( \left(\frac{\vartheta}{\kappa}+\zeta_{\bm{M}}\right)^2+\frac{4\vartheta}{\kappa}\right)> 0.
			\end{equation}
            By virtue of Remark~\ref{rem:lambda}\ref{rem:lambda0},
            this condition ensures that $\zeta< 1/2$.
            Majorizing $\zeta_{\bm{M}}$
            by $(1+\zeta)/\tau$, condition \eqref{eq:stepsizesteoPD} implies \eqref{eq:stepsizesteoPDproof}.
            
            The R-linear convergence is 
            deduced from Theorem~\ref{teo:NLFBMM} when $\hat{\mu}$, as defined by \eqref{def:murho1}, is positive.
		\end{proof}	
		\begin{rem}\ 
        Suppose that $L \neq 0$ and
        set $\kappa \in ~]0,1[$. For any $\tau \in \RPP$ we can find $\sigma \in \RPP$ such that $\kappa = 1-\sigma\tau\|L\|^2$. Then, $\sigma = (1-\kappa)/(\tau \|L\|^2)$, $\underline{\chi} = (1+\sqrt{1-\kappa})^{-1}$, and $\overline{\chi} =
        {1+\sqrt{1-\kappa}}$.
  			\begin{enumerate}
				\item When $\hat{\rho} = 0$ and $\theta =1$, \eqref{eq:stepsizesteoPD} reduces to
				\begin{equation}
                \tau < \frac{\kappa}{\frac{1}{2\beta}+2\vartheta},
				\end{equation}
				which is the condition in \cite[Corollary 6.1]{MorinBanertGiselsson2022} guaranteeing convergence of Primal-Dual Method with Block Triangular Resolvent splitting in the monotone case.
                \item When $\hat{\rho}\neq 0$ and $\rho_A=\rho_B=0$, it follows from \eqref{def:murho1}  that
                $\hat{\rho}= \widetilde{\rho} \tau^2$
                where $\widetilde{\rho} = \|L\|^2\mu_{A,B}/(1-\kappa) < 0$.
                %$\hat{\rho} = \frac{\varrho_2\tau \mu_A \mu_B}{\sigma(\mu_A+\mu_B)}$. Set $\widetilde{\rho} =  \frac{\varrho_2\mu_A \mu_B}{\sigma(\mu_A+\mu_B)}$, 
                Thus, \eqref{eq:stepsizesteoPD} reads 
                \begin{equation}
                    \frac{(1+2\overline{\chi}\widetilde{\rho}\tau)^2}{4\big(\beta\kappa+\overline{\chi}\widetilde{\rho}(1+\vartheta/\kappa)\tau^2\big)}-
                \overline{\chi}\widetilde{\rho}
            \left( \left(\frac{2\tau \vartheta}{\kappa}+1\right)^2+\frac{4\tau^2 \vartheta}{\kappa}
                    \right)+\frac{\eta_\theta\vartheta}{\kappa}<\frac{2-\theta
                    }{2\tau},
                \end{equation} 
                which holds for $\tau$ small enough. This is consistent with the choice of $\widetilde{\Omega}_A$ (resp. $\widetilde{\Omega}_B$) in  Condition~\ref{p:Omegacondii} (resp. \ref{p:Omegacondiii}) of Proposition~\ref{p:Omegacond}.      
               \item Situations where $\hat{\rho} \neq 0$ and $\mu_A=\mu_B=0$ will be studied in Section~\ref{se:specicase}.     %         
               %,  from \eqref{def:murho1}  we have $\hat{\rho} = \rho_{A,B}< 0$ 
               %\textcolor{red}{see later}
               %and  \eqref{eq:stepsizesteoPD} can be written as
                %	\begin{equation}\label{eq:stepsizesPDremarkiii}
				%P(\tau,\kappa):=(2-\theta)\tau-2\tau^2\frac{\vartheta\eta_\theta}{\kappa}-\frac{(\tau+2(2-\kappa)\rho_{A,B})^2}{2(\beta\kappa+(2-\kappa)\rho_{A,B}(1+\vartheta/\kappa))} +2(2-\kappa)\rho_{A,B}
            %\left( \left(2\tau\frac{\vartheta}{\kappa}+1\right)^2+4\tau^2\frac{\vartheta}{\kappa}\right)>0.
			%\end{equation}
            %Note that, given $\tau \in \R$, $\kappa \mapsto P(\tau,\cdot)$ is continuous in $\kappa = 1$. Therefore, since $P(\tau,1)>0$ corresponds with the condition \eqref{eq:tauB=L=0}, we have that \eqref{eq:stepsizesPDremarkiii} holds for $\kappa$ close enough to $1$ and $\theta =1$ under the conditions presented in Remark~\ref{rem:parcasescor}\ref{rem:parcasescor1}.
				\item  The authors of \cite{MorinBanertGiselsson2022} included an extrapolation step in the iterates of the Primal–Dual Method with Block Triangular Resolvent. Such an extrapolation step could also be incorporated into our analysis, but we omit it for the sake of simplicity. In the same article, the authors also proposed the Primal–Dual Algorithm with Resolvent-Corrected Kernel, which could likewise be extended to our setting.
                %; however, we omit this extension for the same reason.
			\end{enumerate}
		\end{rem}

        \subsection{Specific cases}
        \label{se:specicase}
        We now present iterative methods derived from Algorithm~\ref{algo:MTPD}  for specific
        instances of Problem~\ref{pro:main}.
        \subsubsection{Case $D=0$} The previous analysis allows us to establish convergence results for the Condat--V\~u algorithm \cite{Condat13,Vu13} in the semimonotone case.
		\begin{prop}\label{cor:FHRB0}
			In the context of Problem~\ref{pro:main} and  
			Assumption~\ref{assume:1}, suppose that $D=0$.
            Let $(\tau,\sigma) \in \Omega_A\times\Omega_B$,  let $\theta \in ]0,2[$, and let $\hat{\rho}\le 0$ be defined by \eqref{def:murho1}.
            % be such that $\sigma\tau\|L\|^2<1$ 
            %for %$\underline{\chi}=(1+\sqrt{\sigma\tau} \|L\|)^{-1}$ and 
            %$\overline{\chi}=$. 
            Moreover, assume that   
            \begin{align}
&   \sqrt{\sigma\tau} \|L\| < \frac{\beta+\hat{\rho}}{\beta},
\label{eq:stepsizescorCVpr}
\\
& -\left(1 + 2\beta\sigma\|L\|^2(2-\theta)\right) \tau^2 
-2\hat{\rho}\sqrt{\sigma}\|L\|\left(2\beta\sigma\|L\|^2 + \theta\right) \tau^{3/2}\nonumber\\ 
&\qquad+ 2\left(\beta(2-\theta) - \hat{\rho}(2\beta\sigma\|L\|^2+ \theta)\right) \tau 
+  4\beta\hat{\rho}\sqrt{\sigma}\|L\| \tau^{1/2} 
+ 4\beta\hat{\rho} > 0.
%\\
%&
%\left(1+
%2\sigma\|L\|^2
%\big(\beta(2-\theta)
%+\hat{\rho}(2\beta\sigma \|L\|^2+\theta)\big)\right)\tau^2-4\beta\hat{\rho}<
%2(\beta(2-\theta)-%\theta\hat{\rho})\tau.
\label{eq:stepsizescorCV}
            \end{align}
            Let $(z_0,v_0) \in \H \times \G$ and  consider the sequence $(z_{n},v_n)_{n\geq 1}$ generated by the following iteration:
			\begin{equation}
				\label{e:algoCV}
				(\forall n \in \N) \quad \left\lfloor
				\begin{aligned}
					&p_{n+1} = J_{\tau A}( z_{n}-\tau(Cz_{n}+L^*v_n))\\
					&q_{n+1} = J_{\sigma B^{-1}}(v_n +\sigma L(2{p_{n+1}}-z_n))\\
                    &(z_{n+1},v_{n+1})=(1-\theta)(z_{n},v_{n})+\theta (p_{n+1},q_{n+1}).
				\end{aligned}
				\right.
			\end{equation} 
			
			Then the following hold.
			\begin{enumerate}
				\item\label{teo:MTMMCV1}  $(z_n,v_n)_{n\in\N}$ converges 
				weakly to a solution to Problem~\ref{pro:main}.
                %where $D_K=0$.
				\item\label{teo:MTMMCV2} If $\nu_A>0$ and $\nu_B>0$, then $(z_n,v_n)_{n\in \N}$
				converges R-linearly to the unique  solution to Problem~\ref{pro:main}.
                %where $D_K=0$.
			\end{enumerate}
		\end{prop}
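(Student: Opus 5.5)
The plan is to obtain the statement as the special case $D=0$ (so $\vartheta=0$) of Theorem~\ref{teo:MMPD}. The only work is to check that Algorithm~\ref{algo:MTPD} reduces to \eqref{e:algoCV}, and that the hypotheses of Theorem~\ref{teo:MMPD} become \eqref{eq:stepsizescorCVpr}--\eqref{eq:stepsizescorCV}.

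\emph{Step 1: the iteration.} With $D=0$ we have $y_n=0$ and $Dp_n=0$ for every $n$. Hence $x_n=z_n-\tau(Cz_n+L^*v_n)$, and the remaining lines of \eqref{e:algonMTPD} are exactly those of \eqref{e:algoCV}. The initialization $(z_{-1},p_0)$ becomes irrelevant, and $\bm{u}_n=0$ for all $n$, so $\zeta=0$. Since $D=0$ is monotone, the value of $\eta_\theta$ plays no role, because it only multiplies $\vartheta=0$.

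\emph{Step 2: the hypotheses.} Set $\omega=\sqrt{\sigma\tau}\|L\|$, so that $\kappa=1-\omega^2=(1-\omega)(1+\omega)$ and $\overline{\chi}=1+\omega$. With $\vartheta=0$, the condition $\beta\kappa+\overline{\chi}\hat{\rho}(1+\vartheta/\kappa)>0$ reads
\[
(1+\omega)\big(\beta(1-\omega)+\hat{\rho}\big)>0 .
\]
This is equivalent to $\omega<(\beta+\hat{\rho})/\beta$, which is \eqref{eq:stepsizescorCVpr}. Because $\hat{\rho}\le 0$, it also forces $\omega<1$, i.e. $\kappa>0$, which is the remaining requirement of Theorem~\ref{teo:MMPD}. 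Condition \eqref{eq:stepsizesteoPD} becomes
\[
\tau(2-\theta)>\frac{(\tau+2(1+\omega)\hat{\rho})^2}{2(1+\omega)\big(\beta(1-\omega)+\hat{\rho}\big)}-2(1+\omega)\hat{\rho}.
\]

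\emph{Step 3: the algebra.} I multiply this inequality by the positive quantity $2(1+\omega)(\beta(1-\omega)+\hat{\rho})$. The $\hat{\rho}^2$ terms cancel, and what remains is
\[
-\tau^2-2\theta\hat{\rho}\tau(1+\omega)+2\tau(2-\theta)\beta(1-\omega^2)+4\beta\hat{\rho}(1+\omega)(1-\omega^2)>0 .
\]
Substituting $\omega=\sqrt{\sigma}\|L\|\tau^{1/2}$ and $\omega^2=\sigma\|L\|^2\tau$, and collecting powers of $\tau$, gives the coefficients in \eqref{eq:stepsizescorCV}:
\begin{itemize}
\item $\tau^2$: $-(1+2\beta\sigma\|L\|^2(2-\theta))$;
\item $\tau^{3/2}$: $-2\hat{\rho}\sqrt{\sigma}\|L\|(2\beta\sigma\|L\|^2+\theta)$;
\item $\tau$: $2(\beta(2-\theta)-\hat{\rho}(2\beta\sigma\|L\|^2+\theta))$;
\item $\tau^{1/2}$: $4\beta\hat{\rho}\sqrt{\sigma}\|L\|$;
\item constant: $4\beta\hat{\rho}$.
\end{itemize}
So \eqref{eq:stepsizescorCV} is equivalent to \eqref{eq:stepsizesteoPD} under \eqref{eq:stepsizescorCVpr}. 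This bookkeeping is the only delicate point; everything else is direct substitution.

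\emph{Step 4: conclusion.} Both assertions now follow from Theorem~\ref{teo:MMPD}\ref{teo:MTMMPD1}--\ref{teo:MTMMPD2}. In particular, $\nu_A>0$ and $\nu_B>0$ give $\hat{\mu}>0$, which yields the R-linear convergence.
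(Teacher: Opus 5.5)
Your proposal is correct and follows the paper's proof. Both specialize Theorem~\ref{teo:MMPD} to $\vartheta=0$, observe that $\beta\kappa+\overline{\chi}\hat{\rho}>0$ is equivalent to \eqref{eq:stepsizescorCVpr} and forces $\kappa>0$, and clear the positive denominator in \eqref{eq:stepsizesteoPD} to reach $2(2-\theta)\tau\beta\kappa+2(2\beta\kappa-\theta\tau)\overline{\chi}\hat{\rho}>\tau^2$ before expanding in powers of $\sqrt{\tau}$; your coefficient bookkeeping, which the paper leaves as ``after simplification,'' is correct.
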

		\begin{proof}
        Let $\kappa = 1-\sigma\tau\|L\|^2$ and  $\overline{\chi}=1+\sqrt{\sigma\tau} \|L\|$.
			The result follows directly from Theorem~\ref{teo:MMPD} by noticing that, when $D=0$, $\vartheta=0$.
            The condition $\beta\kappa+\overline{\chi}\hat{\rho}>0$ implies that $\kappa>0$ and it is equivalent to \eqref{eq:stepsizescorCVpr}.
            %and $\zeta_{\bm{M}}=1/\tau$
            In addition,  \eqref{eq:stepsizesteoPD} can be rexpressed as
            \begin{align}
&2(\beta\kappa+\overline{\chi}\hat{\rho})((2-\theta) \tau +2\overline{\chi}\hat{\rho})>(\tau+2\overline{\chi}\hat{\rho})^2
\label{eq:stepsizescorCV0}\\
\Leftrightarrow\quad &2(2-\theta)\tau\beta\kappa
+
2(2\beta\kappa-\theta\tau)\overline{\chi}\hat{\rho}
>
\tau^2.
\label{eq:stepsizescorCV1}
			\end{align}
            After simplification, this yields the algebraic inequality \eqref{eq:stepsizescorCV}, which is a fourth-degree polynomial inequality in $\sqrt\tau$ when $\hat\rho$ is independent of \(\tau\).
            %\textcolor{blue}{ In addition, $\eqref{eq:stepsizescorCV}$ implies that $\kappa > 0$ and $\beta\kappa+\overline{\chi}\hat{\rho}>0$ This last assertions should be removed, isn't it?}.
 		\end{proof}
        \begin{rem} When $\theta =1$, under \eqref{eq:stepsizescorCVpr},
\eqref{eq:stepsizescorCV0} shows that \eqref{eq:stepsizescorCV} is equivalent to
            \begin{equation}
                -\hat{\rho}(1+\sqrt{\sigma\tau} \|L\|)< \frac{\tau}{2} < \beta (1-\sigma\tau\|L\|^2).
            \end{equation}
            %which implies that \eqref{eq:stepsizescorCVpr} is satisfied.
%            Note that this condition requires that $\beta+\hat{\rho}>0$ and 
%        $\sigma \tau \|L\|^2< (\beta+\hat{\rho})/(\beta-\hat{\rho})$.
        \end{rem}
        As shown below, the previous proposition recovers standard results in the monotone case, while also extending to settings where the operator $A$ or $B$ is hypomonotone or cohypomonotone.
        \begin{cor}\label{c:CVnomon}
            In the context of Problem~\ref{pro:main} and  
			Assumption~\ref{assume:1}\ref{assume:1v}, suppose that $D=0$, let $(z_0,v_0) \in \H \times \G$, and consider the sequence $(z_{n},v_n)_{n\geq 1}$ generated by iteration \eqref{e:algoCV} with $(\tau,\sigma)\in ]0,+\infty[^2$ and $\theta \in ]0,2[$. 
            %Let $\kappa = 1 -\tau\sigma\|L\|^2$  and $\overline{\chi}=1+\tau\sigma \|L\|^2$.
            Then, $(z_n,v_n)_{n\in\N}$ converges 
				weakly to a solution to Problem~\ref{pro:main} if one of the following conditions holds:
                \begin{enumerate}
                    \item\label{c:CVnomoni} $A$ and $B$ are maximally monotone and $\tau < 2\beta (2-\theta) (1-\sigma\tau\|L\|^2)$,
                    \item 
                    $\rho_A< 0$, $\rho_B > -\rho_A$, $A$ is maximally $\rho_A$-comonotone, $B$ is maximally $\rho_B LL^*$-comonotone, and 
                    \eqref{eq:stepsizescorCVpr}-\eqref{eq:stepsizescorCV} hold with $\hat{\rho}=\frac{\rho_A \rho_B}{\rho_A+\rho_B}$.
                    %$-\frac{2\rho_A \rho_B}{\rho_A+\rho_B} (1+\sigma\tau \|L\|^2)< \tau < 2\beta(1-\sigma\tau \|L\|^2)$,
                    \item\label{c:CVnomonii} $\rho_B < 0$, $\rho_A >  -\rho_B$, $L\neq 0$, $A$ is maximally  $\rho_A$-comonotone, $B$ is maximally $\rho_B LL^*$-comonotone, \eqref{eq:stepsizescorCVpr}-\eqref{eq:stepsizescorCV} hold with $\hat{\rho}=\frac{\rho_A \rho_B}{\rho_A+\rho_B}$,
                    %$-\frac{2\rho_A \rho_B}{\rho_A+\rho_B} (1+\sigma\tau \|L\|^2)< \tau < 2\beta(1-\sigma\tau \|L\|^2)$, 
                    and                    $\sigma < -\frac{1}{\rho_B\|L\|^2}$,      
                    \item\label{c:CVnomoniii}  $\mu_A<0$, $\mu_B > -\mu_A$, $L\neq 0$,
                    $A$ is maximally $\mu_A L^*L$-monotone, $B$ is maximally $\mu_B$-monotone,
                    \eqref{eq:stepsizescorCVpr}-\eqref{eq:stepsizescorCV} hold with $\hat{\rho}=\frac{\tau}{\sigma}\frac{\mu_A \mu_B}{\mu_A+\mu_B}$, and $\tau<-\frac{1}{\mu_A\|L\|^2}.$
                    %and $-\frac{2\mu_A \mu_B}{\mu_A\mu_B} (1+\sigma\tau \|L\|^2)< \tau < \min\left\{2\beta(1-\sigma\tau \|L\|^2),-\frac{1}{\mu_A\|L\|^2}\right\}$,                    
                    \item\label{c:CVnomoniv}  $\mu_B<0$, $\mu_A > -\mu_B$, $A$ is maximally $\mu_A L^*L$-monotone, $B$ is maximally $\mu_B$-monotone,
                    \eqref{eq:stepsizescorCVpr}-\eqref{eq:stepsizescorCV} hold with $\hat{\rho}=\frac{\tau}{\sigma}\frac{\mu_A\mu_B}{\mu_A+\mu_B}$,
                    %$-\frac{2\mu_A \mu_B}{\mu_A\mu_B} (1+\sigma\tau \|L\|^2)< \tau < 2\beta(1-\sigma\tau \|L\|^2)$, 
                    and $\sigma > -\mu_B$.
                \end{enumerate}
       \end{cor}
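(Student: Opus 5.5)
The plan is to derive each case from Proposition~\ref{cor:FHRB0}. For that we must check, case by case, that Assumption~\ref{assume:1}\ref{eq:assume12}--\ref{eq:monAassume2} holds with suitable parameters, that $(\tau,\sigma)\in\Omega_A\times\Omega_B$, and that the computed $\hat\rho$ from \eqref{def:murho1} is the one stated. Assumption~\ref{assume:1}\ref{assume:1v} is assumed directly. Since $D=0$, Assumption~\ref{assume:1}\ref{eq:monAassume1} concerns $A$ alone (Proposition~\ref{p:ex42(ii)}(iii)). Global semimonotonicity implies semimonotonicity on $\pmb{\mathbb{S}}_1,\pmb{\mathbb{S}}_2$. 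We take $\nu_A=\nu_B=0$ throughout, which is allowed since $\nu_A,\nu_B\in\RP$.

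\emph{Case (i).} Take all parameters zero. Then $\hat\rho=0$, and Proposition~\ref{p:Omegacond}, with maximality and (i)/(iii), gives $\Omega_A=\Omega_B=\RPP$. Condition \eqref{eq:stepsizescorCVpr} becomes $\sigma\tau\|L\|^2<1$. Condition \eqref{eq:stepsizescorCV} with $\hat\rho=0$ reads $-\tau^2(1+2\beta\sigma\|L\|^2(2-\theta))+2\beta(2-\theta)\tau>0$, that is, $\tau<2\beta(2-\theta)(1-\sigma\tau\|L\|^2)$. This inequality already forces $\sigma\tau\|L\|^2<1$.

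\emph{Cases (ii) and (iii).} Set $\mu_A=\mu_B=0$ and $\rho_A+\rho_B>0$. Then \eqref{def:murho1} gives $\hat\rho=\min\{0,\rho_{A,B}\}=\rho_A\rho_B/(\rho_A+\rho_B)<0$. The parameter $B$ is $(0,\rho_BLL^*)$-semimonotone, so we use $\nu_B=0$ and $\rho_B$ as given. For $\Omega_A$ we apply Proposition~\ref{p:Omegacond}\ref{p:Omegacondi} with $\nu=\mu=0$, which gives $\Omega_A=]\max\{0,-\rho_A\},+\infty[$. In case (ii) we need $\tau>-\rho_A$. I expect this to follow from \eqref{eq:stepsizescorCV}: as in the remark after Proposition~\ref{cor:FHRB0}, that inequality should encode $\tau/2>-\hat\rho\,\overline\chi$, and $-\hat\rho\ge-\rho_A/2$ because $\rho_B/(\rho_A+\rho_B)\ge 1/2$ when $\rho_B\ge-\rho_A$. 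Hence $\tau>-2\hat\rho\ge-\rho_A$. For $\Omega_B$ in case (ii) we have $\mu_B=0$, $\rho_B>0$, and Proposition~\ref{p:Omegacond}\ref{p:Omegacondiv} gives $\RPP$. In case (iii), \ref{p:Omegacondiv} gives $\widetilde\Omega_B=]0,-1/(\rho_B\|L\|^2)[$, which is exactly the extra hypothesis on $\sigma$, and the maximality clause with $\mu_B=0$ upgrades this to $\Omega_B$. Also, in case (iii) $\rho_A>0$ makes $\Omega_A=\RPP$.

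\emph{Cases (iv) and (v).} These are symmetric, with $\rho_A=\rho_B=0$ and $\hat\rho=\frac{\tau}{\sigma}\mu_{A,B}<0$. For $A$, maximal $\mu_AL^*L$-monotonicity with $\rho=0$ and Proposition~\ref{p:Omegacond}\ref{p:Omegacondii} give $\Omega_A=]0,-1/(\mu_A\|L\|^2)[$ in case (iv), which matches the stated $\tau$ bound, and $\RPP$ in case (v). For $B$, maximal $\mu_B$-monotonicity means $\nu_B=\rho_B=0$, and \ref{p:Omegacondiii} gives $]\max\{0,-\mu_B\},+\infty[$, which produces the condition $\sigma>-\mu_B$ in case (v).

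With all hypotheses verified, Proposition~\ref{cor:FHRB0}\ref{teo:MTMMCV1} yields weak convergence. The main obstacle is case (ii): showing that $\tau>-\rho_A$ is implied by \eqref{eq:stepsizescorCV}, or else adding it as a tacit requirement. I would first try extracting $\tau>-2\overline\chi\hat\rho$ from \eqref{eq:stepsizescorCV1}, since $\beta\kappa+\overline\chi\hat\rho>0$ gives $2(\beta\kappa+\overline\chi\hat\rho)\big((2-\theta)\tau+2\overline\chi\hat\rho\big)>(\tau+2\overline\chi\hat\rho)^2\ge0$.
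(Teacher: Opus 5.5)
Your route is the paper's: reduce each case to Proposition~\ref{cor:FHRB0}, read off $\Omega_A,\Omega_B$ from Proposition~\ref{p:Omegacond} (with the maximality clause giving $\Omega=\widetilde\Omega$), and compute $\hat\rho$ from \eqref{def:murho1}. Your checks of cases (i), (iii), (iv) and (v) are correct. In case (ii), using \ref{p:Omegacondiv} rather than \ref{p:Omegacondiii} for $\Omega_B$ also works.

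The one step that fails as written is the derivation of $\tau>-\rho_A$ in case (ii).

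\begin{itemize}
\item From \eqref{eq:stepsizescorCV0} and $\beta\kappa+\overline\chi\hat\rho>0$ you only obtain $(2-\theta)\tau+2\overline\chi\hat\rho>0$, i.e.\ $\tau>-2\overline\chi\hat\rho/(2-\theta)$.
\item This gives your claimed $\tau>-2\overline\chi\hat\rho$ only when $\theta\geq 1$. The remark you invoke is for $\theta=1$.
\item For $\theta\in\,]0,1[$ the bound $\tau>-2\hat\rho$ can genuinely fail. Take $L=0$, $\beta=2$, $\hat\rho=-1$, $\theta=0.1$, $\tau=1.5$: then \eqref{eq:stepsizescorCVpr} and \eqref{eq:stepsizescorCV1} both hold, but $\tau<2=-2\hat\rho$.
\item So your chain $\tau>-2\hat\rho\geq-\rho_A$ breaks, because it relies on the weak estimate $-\hat\rho\geq-\rho_A/2$.
\end{itemize}

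The fix is to use the sharper bound. Since $\rho_A<0<\rho_A+\rho_B$, you have $\rho_B/(\rho_A+\rho_B)>1$, hence $-\hat\rho>-\rho_A$. Since $2\overline\chi/(2-\theta)>1$, it follows that $\tau>-2\overline\chi\hat\rho/(2-\theta)>-\hat\rho>-\rho_A$. This is exactly the paper's argument, so no tacit extra hypothesis is needed.
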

       \begin{proof}
           This result is deduced from Proposition~\ref{cor:FHRB0} by noticing that, in each of the considered cases, Assumptions~\ref{assume:1}\ref{eq:assume12}-\ref{eq:monAassume2} are satisfied.
           \begin{enumerate}
               \item Suppose that $\mu_A=\mu_B=\rho_A=\rho_B=0$.
               We have then $\hat{\rho}=0$.
               \item Suppose that $\nu_A=\nu_B=\mu_A=\mu_B=0$ and $\rho_A+\rho_B > 0$. 
               It follows from Proposition~\ref{p:Omegacond}\ref{p:Omegacondi}\&\ref{p:Omegacondiii} that $\Omega_A=]-\rho_A,+\infty[$
               and $\Omega_B = ]0,+\infty[$.
               From \eqref{def:murho1}, 
               $\hat{\rho} = \rho_{A,B} = \frac{\rho_A \rho_B}{\rho_A+\rho_B}$. In addition,
               it is deduced from
        \eqref{eq:stepsizescorCV1} that 
        $\tau>-2\overline{\chi}\hat{\rho}/(2-\theta)$. 
        The latter lower bound is larger than $-\rho_A$ 
        since $-\hat{\rho}>-\rho_A$ and $2\overline{\chi}/(2-\theta)>1$. 
               Thus, condition~\eqref{eq:stepsizescorCV} implies that $\tau > -\rho_A$.
               \item Similarly to the previous case, we deduce from Proposition~\ref{p:Omegacond}\ref{p:Omegacondi}\&\ref{p:Omegacondiv} that $\Omega_A=]0,+\infty[$
               and $\Omega_B = \left]0,-1/(\rho_B\|L\|^2)\right[$.
               \item Suppose that $\nu_A = \nu_B = \rho_A = \rho_B = 0$, and $\mu_A+\mu_B>0$. 
               It follows from Proposition~\ref{p:Omegacond}\ref{p:Omegacondii}\&\ref{p:Omegacondiii} that $\Omega_A=]0,-1/(\mu_A\|L\|^2)[$
               and $\Omega_B = ]0,+\infty[$.
               In addition, from \eqref{def:murho1}, 
               $\hat{\rho} = \frac{\tau}{\sigma}\mu_{A,B} = \frac{\tau}{\sigma}\frac{\mu_A \mu_B}{\mu_A+\mu_B}$.
               \item Similarly to the previous case, we deduce from Proposition~\ref{p:Omegacond}\ref{p:Omegacondii}\&\ref{p:Omegacondiii} that $\Omega_A=]0,+\infty[$
               and $\Omega_B = ]-\mu_B,+\infty[$.
           \end{enumerate}
       \end{proof}

		\begin{rem}
			%\begin{enumerate}\ 
				%\item 	
                In the particular case when $C=0$,  \eqref{e:algoCV} corresponds to the primal-dual algorithm proposed in \cite{ChambollePock2011}. 
                Then, by letting $\beta\to+\infty$, we obtain convergence conditions for this iterative method for semimonotone operators. Similar results were established in \cite[Corollary~5.2]{EvensLatafatPatrinos2025CP}.  
		\end{rem}   
        
        \subsubsection{Case $B=0$ and $L=0$}
		The following result is a specialization of Theorem~\ref{teo:NLFBMM}. It provides a comonotone version of the Forward-Half-Reflected-Backward algorithm proposed in \cite{Malitsky2020SIAMJO} for finding a zero of $A+C+D$.
        We will make the following assumption on the involved operators.
        \begin{asume}\label{a:FHRB}
Let $\rho_A\in \R$ and $\nu_A\in \RP$,
            and suppose that
		\begin{enumerate}
			%\item \label{eq:neqassume1} {\color{red} $D_K\neq 0$},
			\item \label{a:FHRBi} $A+D$ is $(\nu_A,\rho_A)$-semimonotone on $\menge{(x^*,-Cx^*)}{-Cx^*\in (A+D)x^*}$;%$\pmb{\mathbb{S}}_1=\menge{(x^*,-Cx^*)}{(x^*,u^*) \in \bm{Z}}$;
			\item\label{a:FHRBii}  there exists $\Omega_A \subset \RPP$ such that, for every $\tau\in  \Omega_A$, $J_{\tau A}$ is single-valued and has full domain;
			\item\label{a:FHRBiii}   $\gra(A+D+C)$ %$\gra(\bm{A}+\bm{C})$ 
            is sequentially weak-strong  closed.
		\end{enumerate}             
        \end{asume}
 		\begin{prop}\label{cor:FHRB}
			Consider Problem~\ref{pro:main} where $B=0$ and $L=0$. Let $\rho_A\in \R$ and $\nu_A\in \RP$,
            and suppose that
            Assumption~\ref{a:FHRB} holds.
        Let $\tau \in \Omega_A$, let $\theta \in ]0,2[$,
        and let $\hat{\rho}=\min(0,\rho_A)$.
        Let $(z_0,p_0,z_{-1}) \in \H^3$, set $y_{-1}=Dz_{-1}$, and consider the sequence $(z_{n})_{n \geq 1}$ generated by the following recurrence:
			\begin{equation}
				\label{e:algonMT}
				(\forall n \in \N) \quad \left\lfloor
				\begin{aligned}
					&y_{n} =  D z_{n}\\
					&x_{n} = z_{n}-\tau ({y_{n}-y_{n-1}+D p_n}+Cz_{n})\\
					&{p_{n+1}} = J_{\tau A} x_{n}\\
                    &{z_{n+1}}= (1-\theta)z_n+\theta p_{n+1}.
				\end{aligned}
				\right.
			\end{equation} 	
			Set $\eta_\theta$ as in \eqref{e:defetatheta}.
            %$\eta_\theta = 0$, if $\theta \in [1,2[$, and $\eta_\theta = |1-\theta|$, otherwise.} 
            Moreover,       
            assume that
\begin{align}
&\gamma = \beta+\hat{\rho}(1+\vartheta)>0,
\label{eq:tauB=L=0bpre}\\
& b_2 \tau^2+b_1\tau+b_0>0
\label{eq:tauB=L=0b}
\end{align}
with
\begin{equation}\label{eq:defb0b1b2}
    b_0 = 4\hat{\rho}
(\gamma-\hat{\rho}), \quad
b_1 = 2\left(\gamma(2-\theta+8\hat{\rho}\vartheta)-2\hat{\rho}\right),\quad
b_2 = 16\gamma\hat{\rho}\vartheta(1+\vartheta)
-4\gamma\vartheta\eta_\theta-1.
\end{equation}
%\begin{align}
%\left(1+4\gamma\vartheta\big(\textcolor{blue}{\eta_\theta-4\hat{\rho}%(1+\vartheta})\big)\right)\tau^2
%-4\hat{\rho}
%(\gamma-\hat{\rho})<2\left(\gamma(2-\theta)-2\hat{\rho}(1-4\gamma\vartheta)\right)\tau.
%\end{align}
%\begin{align}
%&1+4\gamma\vartheta\bigl(1+\eta_\theta\vartheta-4\hat{\rho}(1+\vartheta)\bigr)>0\\
%&2\gamma(2-\theta)-4\hat{\rho}(1-4\gamma\vartheta)\\
%&-4\hat{\rho}
%(\gamma-\hat{\rho})\geq 0.
%\end{align}

			Then, the following hold.
			\begin{enumerate}
				\item\label{teo:MTMM1}  $(z_n)_{n\in\N}$ converges 
				weakly to $z \in \zer{(A+C+D)}$.
				\item\label{teo:MTMM2} If $\nu_A>0$, then $(z_n)_{n\in \N}$
				converges R-linearly to the unique $z \in \zer{(A+C+D)}$.
			\end{enumerate}
		\end{prop}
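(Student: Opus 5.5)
The plan is to apply Theorem~\ref{teo:NLFBMM} directly in $\bm{\H}=\H$ with $\bm{S}=\id$, rather than passing through Theorem~\ref{teo:MMPD}. I take
\[
\bm{A}=A+D,\qquad \bm{C}=C,\qquad \bm{M}=\tau^{-1}\id-D.
\]

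\emph{Step 1: the iteration is an instance of Algorithm~\ref{algo:NLFBMM}.} We have $\bm{M}+\bm{A}=\tau^{-1}\id+A$, so $(\bm{M}+\bm{A})^{-1}=J_{\tau A}(\tau\,\cdot)$. Since $\tau\in\Omega_A$, this operator is single-valued with full domain, which gives Assumption~\ref{assum:2}\ref{assum:21}. Next, $\tau\bm{M}-\bm{S}=-\tau D$, so $\bm{u}_{n}=-\tau Dp_{n}+\tau Dz_{n-1}$, with $\bm{u}_0$ defined from $p_0$ and $z_{-1}$. Then
\[
\tau(\bm{M}z_n-Cz_n+\bm{u}_n/\tau)=z_n-\tau(Dz_n-Dz_{n-1}+Dp_n+Cz_n)=x_n,
\]
so $p_{n+1}=(\bm{M}+\bm{A})^{-1}(\bm{M}z_n-\bm{C}z_n+\bm{u}_n/\tau)$. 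The relaxation step $z_{n+1}=(1-\theta)z_n+\theta p_{n+1}$ is identical to \eqref{eq:NLFBMM-c}.

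\emph{Step 2: the remaining parts of Assumption~\ref{assum:2}.}
\begin{enumerate}
\item Sequential weak-strong closedness of $\gra(\bm{A}+\bm{C})$ is Assumption~\ref{a:FHRB}\ref{a:FHRBiii}.
\item Assumption~\ref{a:FHRB}\ref{a:FHRBi} says $\bm{A}$ is $(\nu_A\bm{S},\rho_A\bm{S}^{-1})$-semimonotone on $\pmb{\mathbb{S}}$. In particular it is $\hat\rho$-comonotone there, because $\nu_A\ge0$ and $\hat\rho\le\rho_A$.
\item $\tau\bm{M}-\bm{S}=-\tau D$ is $\zeta$-Lipschitzian with $\zeta=\tau\vartheta$.
\item $\bm{S}-\tau\bm{M}=\tau D$ is monotone exactly when $D$ is, so $\eta=\eta_\theta$.
\item The condition $\tau>\underline\tau_\zeta$ is equivalent to $\beta\tau+\hat\rho(\tau+\tau\vartheta)>0$, that is, $\gamma>0$, which is \eqref{eq:tauB=L=0bpre}.
\item The requirement $\zeta<1/2$ will follow from $\lambda>0$ through Proposition~\ref{prop:overlinezeta}\ref{prop:overlinezetaii} (Remark~\ref{rem:lambda}\ref{rem:lambda0}), as in the proof of Theorem~\ref{teo:MMPD}.
\end{enumerate}

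\emph{Step 3: $\lambda>0$ is implied by \eqref{eq:tauB=L=0b}.} This is the main computation. By Remark~\ref{re:firstrem}\ref{re:firstremi}, $\zeta_{\bm{M}}\le(1+\zeta)/\tau=\tau^{-1}+\vartheta$. Since $\hat\rho\le0$, replacing $\zeta_{\bm M}$ by this upper bound in \eqref{e:defmutn} can only decrease $\lambda$. It therefore suffices to show
\[
2-\theta-2\eta_\theta\tau\vartheta-\frac{(\tau+2\hat\rho)^2}{2\tau\gamma}+2\hat\rho\tau\Big(\big(\tau^{-1}+2\vartheta\big)^2+4\vartheta\Big)>0.
\]
I multiply by $2\tau\gamma>0$ and collect powers of $\tau$, using $4\gamma\hat\rho\big((1+2\tau\vartheta)^2+4\vartheta\tau^2\big)$ for the last term. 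The $\tau^2$ coefficient is $16\gamma\hat\rho\vartheta(1+\vartheta)-4\gamma\vartheta\eta_\theta-1=b_2$. The $\tau$ coefficient is $2\gamma(2-\theta)+16\gamma\hat\rho\vartheta-4\hat\rho=b_1$. The constant term is $4\gamma\hat\rho-4\hat\rho^2=b_0$. Hence \eqref{eq:tauB=L=0b} gives $\lambda>0$. The only care needed here is the bookkeeping of this expansion and the sign argument for bounding $\zeta_{\bm M}$.

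\emph{Step 4: conclusion.} Theorem~\ref{teo:NLFBMM}\ref{teo:NLFBMM1} gives weak convergence of $(z_n)_{n\in\N}$ to a point of $\zer(\bm{A}+\bm{C})=\zer(A+C+D)$. When $\nu_A>0$, $\bm{A}$ is $(\nu_A\bm{S},\rho_A\bm{S}^{-1})$-semimonotone on $\pmb{\mathbb{S}}$, so Theorem~\ref{teo:NLFBMM}\ref{teo:NLFBMM2} (together with Proposition~\ref{prop:uniquesol} for uniqueness) gives R-linear convergence to the unique zero.
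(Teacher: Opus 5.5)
Your proposal is correct, and your expansion of $2\tau\gamma$ times your lower bound on $\lambda$ reproduces $b_2$, $b_1$, $b_0$ exactly.

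The route differs from the paper's in where the reduction is made. The paper obtains the result as a specialization of Theorem~\ref{teo:MMPD}. It works in the product space $\H\times\G$ with $L=0$, $B=0$, $\kappa=\overline{\chi}=1$ and $\mu_B=0$, takes $\nu_B,\rho_B$ arbitrarily large, and reads off \eqref{eq:tauB=L=0b} from \eqref{eq:stepsizesteoPD}. You instead instantiate Theorem~\ref{teo:NLFBMM} directly on $\H$ with $\bm{S}=\id$, $\bm{A}=A+D$ and $\bm{M}=\tau^{-1}\id-D$.

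What your route buys:
\begin{itemize}
\item It avoids the dummy dual variable, whose iterates merely satisfy $v_{n+1}=(1-\theta)v_n$.
\item It avoids the implicit limiting argument behind the paper's choice of $\rho_B$. For finite $\rho_B$ and $\rho_A<0$, one has $\rho_A\rho_B/(\rho_A+\rho_B)<\rho_A$, so reaching exactly $\hat\rho=\min\{0,\rho_A\}$ there requires a continuity argument based on the strictness of \eqref{eq:tauB=L=0bpre}--\eqref{eq:tauB=L=0b}. In your setting, $\hat\rho=\min\{0,\rho_A\}$ arises directly.
\end{itemize}

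What the paper's route buys: it recycles constants already verified in Theorem~\ref{teo:MMPD}, namely $\zeta=\tau\vartheta/\kappa$, the cocoercivity of $\bm C$ with respect to $\bm S$, and the majorization $\zeta_{\bm M}\le(1+\zeta)/\tau$. It also presents the statement as a corollary of the main theorem.

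One small point to make explicit in your Step~2. Assumption~\ref{assum:2}\ref{assum:23} requires $\rho>-\beta$, and your equivalence $\tau>\underline{\tau}_\zeta\Leftrightarrow\gamma>0$ uses $\beta+\hat\rho>0$. Both follow from \eqref{eq:tauB=L=0bpre}, because $\beta+\hat\rho\ge\gamma>0$ when $\hat\rho\le 0$ and $\vartheta\ge 0$.
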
	
		\begin{proof} 
        In the considered setting,  Algorithm~\ref{algo:NLFBMM} reduces to 
        algorithm \eqref{e:algonMT}.
        We apply Theorem~\ref{teo:MMPD} with
        $\kappa = 1$, $\overline{\chi} = 1$,
         $\mu_B = 0$, 
        and choosing  $\nu_B$
        and $\rho_B$ arbitrarily large.
        In this context, \eqref{eq:stepsizesteoPD} reduces to 
        \begin{equation}
\label{eq:tauB=L=0}
		(2-\theta) \tau >2(\eta_\theta-4\hat{\rho})\tau^2\vartheta+\frac{1}{2\gamma}(\tau+2\hat{\rho})^2 -2\hat{\rho}
            %\left( 
\left(2\tau\vartheta+1\right)^2
    %+4\tau^2\vartheta\right).
			\end{equation}
            After some algebra, this condition can be rewritten as
        \eqref{eq:tauB=L=0b}.
		\end{proof}
		\begin{rem}\label{rem:parcasescor}  Consider the setting of Proposition~\ref{cor:FHRB}. 			\begin{enumerate}
                \item\label{rem:parcasescor1}  
                Define 
                \begin{align}
                &R(\tau ) = b_2\tau^2 +b_1 \tau +b_0,\\
               &\Delta = b_1^2-4b_2 b_0.
                 \end{align}
                Since $\hat{\rho}\le 0$ and 
                $\gamma > 0$,  we have $b_0 \leq 0$
                and $b_2 <0$. Hence, if $b_1\leq 0$ or $\Delta \leq  0$, $R(\tau)\leq 0$ for every $\tau >0$. Otherwise, if $b_1 > 0$ and $\Delta> 0$, $R(\tau)>0$ for $\tau \in \left]\tau_0,\tau_1\right[$.
Because of the signs of $(b_0,b_1,b_2)$, it follows that $\tau_0\geq 0$ ($\tau_0>0$ if $\hat{\rho}\neq 0$). Moreover,
\begin{align}
    \frac{\Delta}{4\gamma}=& \left( 
    (2-\theta)^2+16\hat{\rho}\vartheta(2-\theta+\eta_\theta-4\hat{\rho})\right)\gamma\nonumber\\
    &+
    4\hat{\rho}\left(\theta-1-4\vartheta(2+\eta_\theta)\hat{\rho}+16\vartheta(1+\vartheta)\hat{\rho}^2\right).
\end{align}
Suppose that $\theta\in [1,2[$
and
\begin{equation}
\label{e:condhrhovthbet}
-\hat{\rho}\vartheta<\frac{(2-\theta)^2}{16(2-\theta+\eta_\theta-4\hat{\rho})}, 
\quad
\beta>-\hat{\rho} \frac{ \theta^2 + \vartheta(2-\theta)^2 + 16\vartheta\hat{\rho} \big( \vartheta(2-\theta+\eta_\theta) - \theta \big)}{(2-\theta)^2+16\hat{\rho}\vartheta(2-\theta+\eta_\theta-4\hat{\rho})}.
\end{equation}
Then \eqref{eq:tauB=L=0bpre}-\eqref{eq:tauB=L=0b} hold for $\tau \in ]\tau_0,\tau_1[$ with
\begin{equation}
            0\leq \tau_0=\frac{-b_1+\sqrt{\Delta}}{2b_2}<\tau_1=\frac{-b_1-\sqrt{\Delta}}{2b_2}.
\end{equation} 
Indeed, since 
$
\theta-1
-4\vartheta(2+\eta_\theta)\hat\rho
+16\vartheta(1+\vartheta)\hat\rho^2
\geq0$, $\Delta$ is positive if and only if 
\begin{equation}
\label{eq:varthetarho}
(2-\theta)^2+16\hat{\rho}\vartheta(2-\theta+\eta_\theta-4\hat{\rho})>0
\quad\Leftrightarrow\quad -\hat{\rho}\vartheta<\frac{(2-\theta)^2}{16(2-\theta+\eta_\theta-4\hat{\rho})}
\end{equation}
and
\begin{equation}
\label{eq:betavarthetarho}
    \gamma > -4\hat{\rho}
    \frac{\theta-1
    +4\vartheta\hat{\rho}
    \big(4(1+\vartheta)\hat{\rho}-2-\eta_\theta
    \big)}
    {(2-\theta)^2+16\hat{\rho}\vartheta(2-\theta+\eta_\theta-4\hat{\rho})}.
\end{equation}
Note that \eqref{eq:varthetarho}
implies that
\begin{equation}
2-\theta+8\hat{\rho}\vartheta       
> 2-\theta-\frac{(2-\theta)^2}{2(2-\theta+\eta_\theta-4\hat{\rho})}
= (2-\theta)
\frac{2+2\eta_\theta-\theta-8\hat{\rho}}{2(2-\theta+\eta_\theta-4\hat{\rho})}>0,
\end{equation}
hence $b_1>0$.
Besides,
\eqref{eq:betavarthetarho}
implies that $\gamma >0$, i.e., \eqref{eq:tauB=L=0bpre}
holds.
Furthermore,
\eqref{eq:betavarthetarho} is equivalent to
\begin{equation}
\beta>
-4\hat{\rho}
    \frac{\theta-1
    +4\vartheta\hat{\rho}
    \big(4(1+\vartheta)\hat{\rho}-2-\eta_\theta
    \big)}
    {(2-\theta)^2+16\hat{\rho}\vartheta(2-\theta+\eta_\theta-4\hat{\rho})}
    -\hat{\rho}(1+\vartheta),
\end{equation}
which yields the second inequality in
\eqref{e:condhrhovthbet}.
\item\label{rem:parcasescor3} In the case when $C=0$ and $D\neq 0$, \eqref{e:algonMT} reduces to
				\begin{equation}\label{e:algonMT2}
					(\forall n \in \N) \quad \left\lfloor
					\begin{aligned}
						&y_{n} =  D z_{n}\\
						&x_{n} = z_{n}-\tau (y_{n}-y_{n-1}+D p_n)\\
                        &p_{n+1}= J_{\tau A} x_{n}\\
                        &z_{n+1} = (1-\theta)z_n+\theta p_{n+1}.
					\end{aligned}
					\right.
				\end{equation} 
				This recurrence coincides with the Forward--Reflected-Backward algorithm \cite{Malitsky2020SIAMJO} when $\theta=1$ and $p_0= z_0$. 
                The corresponding convergence conditions are obtained by letting $\beta\to +\infty$.
                Hence, by assuming $\theta\in [1,2[$, the convergence conditions reduce to 
                $(2-\theta)^2+16\hat{\rho}\vartheta(2-\theta+\eta_\theta-4\hat{\rho})>0$
                %$1+32(1-2\hat{\rho})\hat{\rho}\vartheta > 0$ 
                and 
                $\tau \in ]\tau_0,\tau_1[\cap \Omega_A$, where
                \begin{equation}
                \label{e:tauisimp}
                    \tau_i = 
                    \frac{-(2-\theta)-8\hat{\rho}\vartheta+(1-2i)\sqrt{(2-\theta)^2+16\hat{\rho}\vartheta(2-\theta+\eta_\theta-4\hat{\rho})}}{4\vartheta(4\hat{\rho}(1+\vartheta)-\eta_\theta)}, \quad i\in \{0,1\}. 
      %              \frac{-1-8\hat{\rho}\vartheta+(1-2i)\sqrt{1+32\hat{\rho}\vartheta(1-2\hat{\rho})}}{4\vartheta(2\hat{\rho}(1+\vartheta)-1)}, \quad i\in \{0,1\}.
                \end{equation}   
                In \cite{Malitsky2020SIAMJO}, it is further assumed that $A$ is maximally monotone and $D$ is monotone. In this case, we have $\hat{\rho} = 0$, $\Omega_A = ]0,+\infty[$, and
                $(\tau_0,\tau_1) = \big(0,(2-\theta)/(2\vartheta)\big)$.               
				\item\label{rem:parcasescor4} When $D=0$,   \eqref{e:algonMT} reduces to the Forward-Backward algorithm and we recover the convergence conditions in Section~\ref{sec:MNFB} 
                (see Corollary~\ref{coro:overlinezeta}\ref{coro:overlinezetai}).
			\end{enumerate}
            \end{rem}
                
		\section{Numerical Experiments}\label{se:numexp} In this section, we present a numerical illustration of our theoretical results. First, we introduce the adjoint mismatch problem. Next, we provide numerical experiments in the context of signal recovery in the presence of adjoint mismatch.    			\subsection{Application to adjoint mismatch problems} Let $F \colon \H \to \H$ be a $\rho$-cocoercive operator
        for $\rho\in\RPP$
        %maximally $\rho$-comonotone operator for $\rho \in \RPP$ 
        and let $T \colon \H \to \G$ and $K \colon \G \to \H$ be bounded linear operators such that $K T\neq 0$. We assume that $KT-\widetilde{\rho}\, T^* K^*KT$ is a monotone operator with                $\widetilde{\rho}\in \R$ such that $\widetilde{\rho}+\rho > 0$.  We want to
				\begin{equation}
                \label{e:mismatchex}
					\textnormal{find } x \in \H \textnormal{ such that } 0 = Fx+KTx.
				\end{equation}  
                By setting $D=KT$,
                it follows from
                Proposition~\ref{p:ex42(ii)}\ref{p:ex42(ii)ii}
%Remark~\ref{rem:A+D_Kmon}\ref{rem:A+D_Kmon5}
                that $F+D$ is $\rho_{F,D}$-comonotone with $\rho_{F,D} = \rho\widetilde{\rho}/(\rho+\widetilde{\rho})$. The problem can be solved by using algorithms \eqref{e:algonMT} and \eqref{e:algonMT2} under the following two splittings.
                \begin{itemize}
                    \item  Set $A=F$ and $C=0$. Assumption~\ref{a:FHRB}\ref{a:FHRBi} holds with $\nu_A=0$ and $\rho_A=\rho_{F,D}$.  In addition, since $A$ is maximally monotone, Assumption~\ref{a:FHRB}\ref{a:FHRBii} is satisfied for $\Omega_A = \RPP$. 
                    Since $F$ is $\rho$-cocoercive, $D$ is
$\widetilde{\rho}$-comonotone, $\rho+\widetilde{\rho}>0$,
and the bounded linear operator $D$ is weakly continuous,
Lemma~\ref{le:graclosedsemimon} shows that
$\gra(F+D)$ is sequentially weak-strong closed.
Thus, Assumption~\ref{a:FHRB}\ref{a:FHRBiii} also holds.
                    Let $\hat{\rho}=\min\{0,\rho\widetilde{\rho}/(\rho+\widetilde{\rho})\}$, let $(\tau_i)_{0\le i \le 1}$ be defined by \eqref{e:tauisimp} with $\vartheta = \|KT\|$.
                %and assume that
                %\begin{equation}
                %\label{e:rhotau1}
                %   \max\{0,-\rho_A\}
                %< \tau_1. 
                %\end{equation}
                As a consequence of Remark~\ref{rem:parcasescor}\ref{rem:parcasescor3}, if 
                %1) Assumption \ref{a:FHRB}\ref{a:FHRBii} holds, 
                i) $\theta\in [1,2[$,
                ii) $(2-\theta)^2+16\hat{\rho}\|KT\|(2-\theta+\eta_\theta-4\hat{\rho})>0$ 
                and iii) $\tau\in ]\tau_0,\tau_1[$,
                then
                algorithm \eqref{e:algonMT2} 
                converges weakly to a solution to \eqref{e:mismatchex}.
                %If $\widetilde{\rho} \geq 0$, then assumption \eqref{e:rhotau1}
                %obviously holds.
                %If $\widetilde{\rho}<0$, since
                %$\tau_1 \sim (2-\theta)/(2\|KT\|)$ when $KT\to 0$, the assumption is satisfied for $\|KT\|$
                %small enough. 

                    \item Set $A=0$ and $C=F$. 
                    %Suppose that $F$ is single-valued, thus $F$ is $\beta$-cocoercive with $\beta=\rho$.
                    Note that $0+D$ is $\rho_A$-comonotone
                    with $\rho_A=\widetilde{\rho}$
                    and Assumption~\ref{a:FHRB} holds with
                    $\Omega_A = \RPP$.
                    Let $\theta \in [1,2[$, 
                    %let $b_0$, $b_1$, and $b_2$ be as in \eqref{eq:defb0b1b2} with $\hat{\rho}=\min\{0,\widetilde{\rho}\}$, 
                    and suppose that condition~\eqref{e:condhrhovthbet}
                    holds with $\hat{\rho}=\min\{0,\widetilde{\rho}\}$,  $\beta=\rho$,
                    and $\vartheta =\|KT\|$.
                    %$b_1^2-4b_0b_2 >0$, 
                    Then, according to Proposition~\ref{cor:FHRB} and Remark~\ref{rem:parcasescor}\ref{rem:parcasescor1}, inclusion \eqref{e:mismatchex} can be solved by algorithm~\eqref{e:algonMT} when $\tau\in ]\tau_0,\tau_1[$ and $(\tau_0,\tau_1)$ are the bounds given in Remark~\ref{rem:parcasescor}\ref{rem:parcasescor1}.             
                \end{itemize}

        \subsection{Signal recovery example}        
		Let $N \in \N\setminus\{0\}$ and $\overline{x} \in \H=\R^N$ be a signal to be recovered from an observation $r\in \G=\R^M$ which is modelled as follows:
		\begin{equation}\label{eq:observation}
			r = T\overline{x} + e,
		\end{equation}
		where $M\in\N\setminus\{0\}$, $T\in\RR^{M\times N}$, and $e$ is a vector modelling an additive zero-mean white Gaussian noise. 
        To recover the original signal from the observation $r$, we solve the following optimization problem: 
		\begin{equation}\label{eq:opex}
			\min_{x \in \R^N} 
			\lambda H_\delta(Wx)+\frac{1}{2}\|Tx-r\|^2_2,
		\end{equation}
		where $H_\delta$ is the Huber function, $\delta \in \RPP$, $\lambda \in \RPP$ is a regularization parameter, and $W \in\RR^{N\times N}$ is an orthonormal wavelet decomposition. The Huber penalty is defined as
        \begin{equation*}
            H_\delta \colon \H \to \R \colon (x_i)_{1\le i\le N} \mapsto \sum_{i\in \mathbb{I}} h_\delta(x_i),
        \end{equation*}              
        where
        \begin{equation*}
            h_\delta \colon \R \to \R \colon t \mapsto \begin{cases} 
\frac{t^2}{2\delta}, & \text{if } |t| \le \delta, \\
|t| - \frac{\delta}{2}, & \text{if } |t| > \delta.
\end{cases}
        \end{equation*} 
and $\mathbb{I}\subset\{1,\ldots,N\}$ is the index set of wavelet (detail) coefficients.
%        Hereabove,  $H_\delta$ is  applied exclusively to the detail coefficients of the wavelet transform. 
         By Fermat's rule, the considered optimization problem is equivalent to 
		\begin{equation}\label{eq:incluopti1}
			\textnormal{ find } x \in \H \;\;\textnormal{ such that }\;\;  0 = \lambda W^*\nabla H_\delta (Wx) + T^*(Tx-r).
		\end{equation}
		In the case of an adjoint mismatch, $T^*$ is not available and needs to be approximated by $K\in \R^{N\times M}$, and the inclusion becomes
		\begin{equation}\label{eq:incluoptiMM}
			\textnormal{ find } x \in \H \;\;\textnormal{ such that }\;\;  0 = \lambda W^*\nabla H_\delta (Wx)+K(Tx-r),
		\end{equation}
		which is an instance of \eqref{e:mismatchex} for $F = \lambda W^*\circ\nabla H_\delta\circ W-Kr$.
        %, $B=0$, $C=0$, and $L=0$. In view of Remark~\ref{rem:A+D_Kmon}\ref{rem:A+D_Kmon5}, we have that $K\circ T$ is $\rho$-comonotone if and only if $KT-\rho T K^*KT$ is a positive operator. 
        Moreover, $F$ is $\rho$-cocoercive for $\rho=\frac{\delta}{\lambda}$, thus, %by Proposition~\ref{prop:A+Ccomo}~\ref{prop:A+Ccomo2},
    %$A+KT$ is $\widehat{\rho}:=\frac{\rho\delta}{\rho+\delta}$-comonotone if $\rho+\delta > 0$. In addition, $ K\circ L$ is $\|K L \|$-Lipschitz. Altogether, if 
    %\begin{equation}
    %    \hat{\rho} \in \left]\frac{1}{4}\left(1-\sqrt{1+\frac{1}{4\|KL\|}}\right),0\right[,
    %\end{equation}     
     problem~\eqref{eq:incluoptiMM} can be solved by algorithms \eqref{e:algonMT2} or \eqref{e:algonMT} 
   under the scenarios analysed in the previous subsection.  
    % with step-size $\tau \in ]\tau_0,\tau_1[$, where $\tau_0$ and $\tau_1$ are defined in \eqref{e:tauisimp} for $\vartheta = \|KL\|$ (see Remark~\ref{rem:parcasescor}.\ref{rem:parcasescor3}).

		To test these algorithms, we define $T$ as a random normalized matrix of size $M\times N = 256\times 256$ and a mismatched adjoint $K = T^*+s E$ where $E$ is %a random matrix of size $N\times N$ 
        a rank-one perturbation  matrix with unit Frobenius norm, and $s \in \R$.  
        Additionally, we consider $\delta = 10^{-1}$,  $W$ is a decomposition onto an orthonormal Daubechies wavelet basis using filters of length $4$ and $3$ resolution levels. We choose $\widetilde{\rho}$ such that $KT-\widetilde{\rho} T^* K^*KT$ is monotone, in particular,  the minimum eigenvalue of $Q:=(KT+T^*K^*)/2$  is $-2\times 10^{-3}$ 
        and that of $Q-\widetilde{\rho} T^* K^*KT$ is $4.982 \times 10 ^{-4}$ with $\widetilde{\rho} = -0.01$. Note that, in this situation, the affine data fidelity term in \eqref{eq:incluoptiMM} is not monotone. 
        %by calculating the smallest eigenvalue of $(KT+T^*K^*)/2-\widetilde{\rho} (T^*K^*KT)$. 
        We consider two test signals plotted in Figures~\ref{fig:restheta11} and \ref{fig:restheta11p}. The components of the additive Gaussian noise $e$ have a standard deviation of $10^{-2}$. For algorithm \eqref{e:algonMT} we consider the step size $\tau = -0.99(b_1+\sqrt{b_1^2-4b_0b_2})/(2b_2)$, where $b_0$, $b_1$, and $b_2$ are defined in \eqref{eq:defb0b1b2}. For algorithm \eqref{e:algonMT2} we consider $\tau = 0.99\tau_1$, where $\tau_1$ is defined in \eqref{e:tauisimp}. 
        To compare the behaviour of the algorithms in the monotone ($K=T^*$) and non-monotone case ($K\neq T^*$), we also solve the inclusion in \eqref{eq:incluopti1} with algorithms \eqref{e:algonMT2} and \eqref{e:algonMT} for $\hat{\rho} = 0$. As stopping criterion, we use the relative error expressed as $\|x_{n+1}-x_n\|/\|x_n\|$ with tolerance $10^{-7}$. All algorithms were initialized with $z_{-1}=z_0 = p_0= 0$.
        The results obtained for the first signal are presented in Figures~\ref{fig:restheta1} and  \ref{fig:restheta14} for $\theta = 1$ and $\theta = 1.3$,  respectively. Figures~\ref{fig:restheta1p} and  \ref{fig:restheta14p} show the results for the second signal for $\theta = 1$ and $\theta = 1.2$, respectively.    
        From Figures~\ref{fig:restheta11}, \ref{fig:restheta141}, \ref{fig:restheta11p}, and \ref{fig:restheta141p} we observe that the algorithm with and without mismatch recover similar signals.  In Figures \ref{fig:restheta12}, \ref{fig:restheta142},     \ref{fig:restheta13},    and \ref{fig:restheta143},
        we observe that both algorithms \eqref{e:algonMT} and \eqref{e:algonMT2} present a similar decay of the relative error versus the iteration number or computational time for $\lambda = 0.005$. On the other hand, for $\lambda = 0.5$, from Figures \ref{fig:restheta12p}, \ref{fig:restheta142p}, \ref{fig:restheta13p} and \ref{fig:restheta143p},  we observe that algorithm \eqref{e:algonMT2} presents a faster decay of the relative error. In all cases the algorithms with no mismatch are faster in terms of number of iterations or computational time. This is related to the fact that, in the considered scenarios, the negative comonotonicity parameter induced by the mismatch leads to smaller admissible step sizes, which explains the slower convergence we observe. Note that, in our experiments, increasing $\theta$ leads to more iterations to reach the stopping criterion. This can also be explained by the fact that larger values of $\theta$ impose smaller step-sizes.
		\begin{figure}
	\centering
	\subfloat[Original and restored signals]{\label{fig:restheta11}\includegraphics[width=0.32\textwidth]{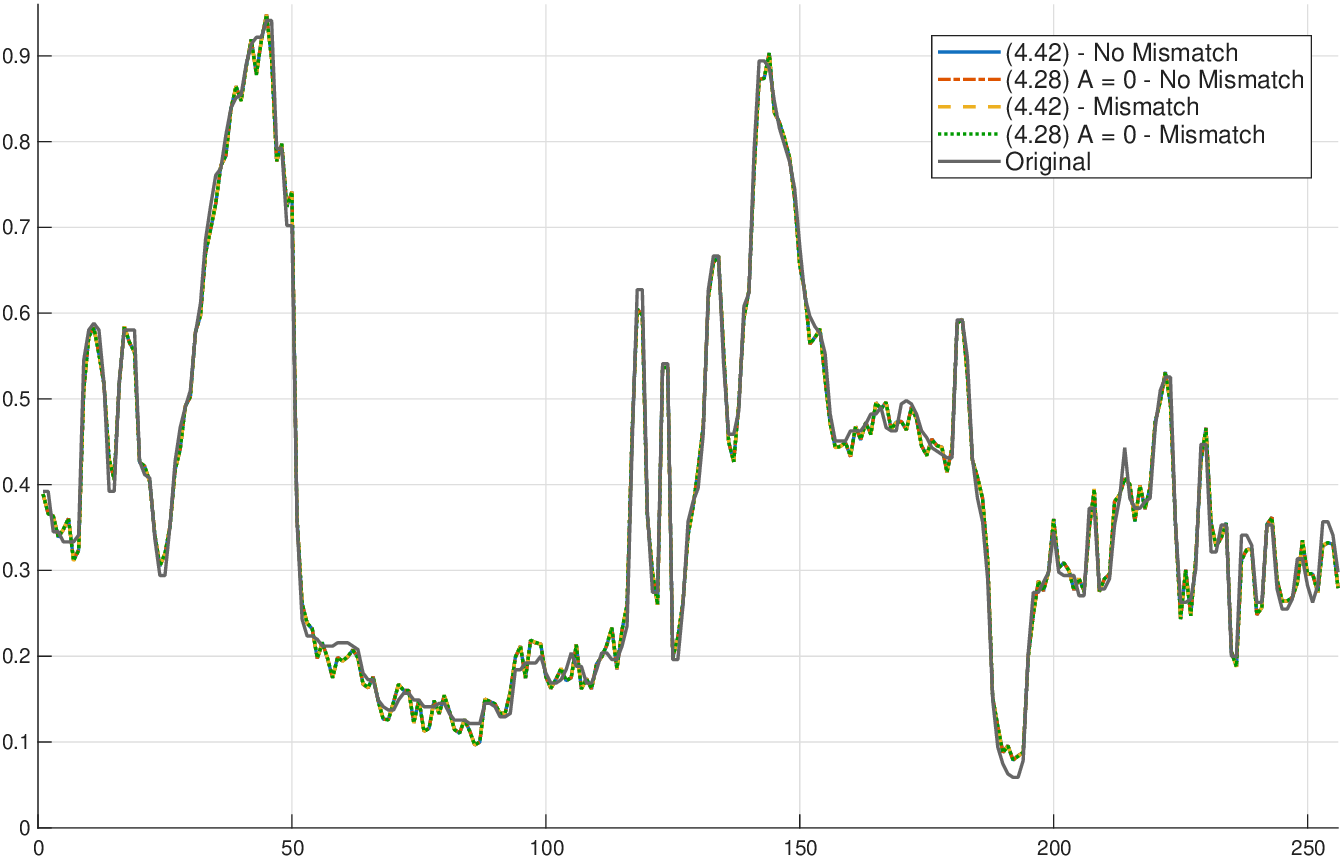}}\,
	\subfloat[Relative error vs iterations]{\label{fig:restheta12}\includegraphics[width=0.32\textwidth]{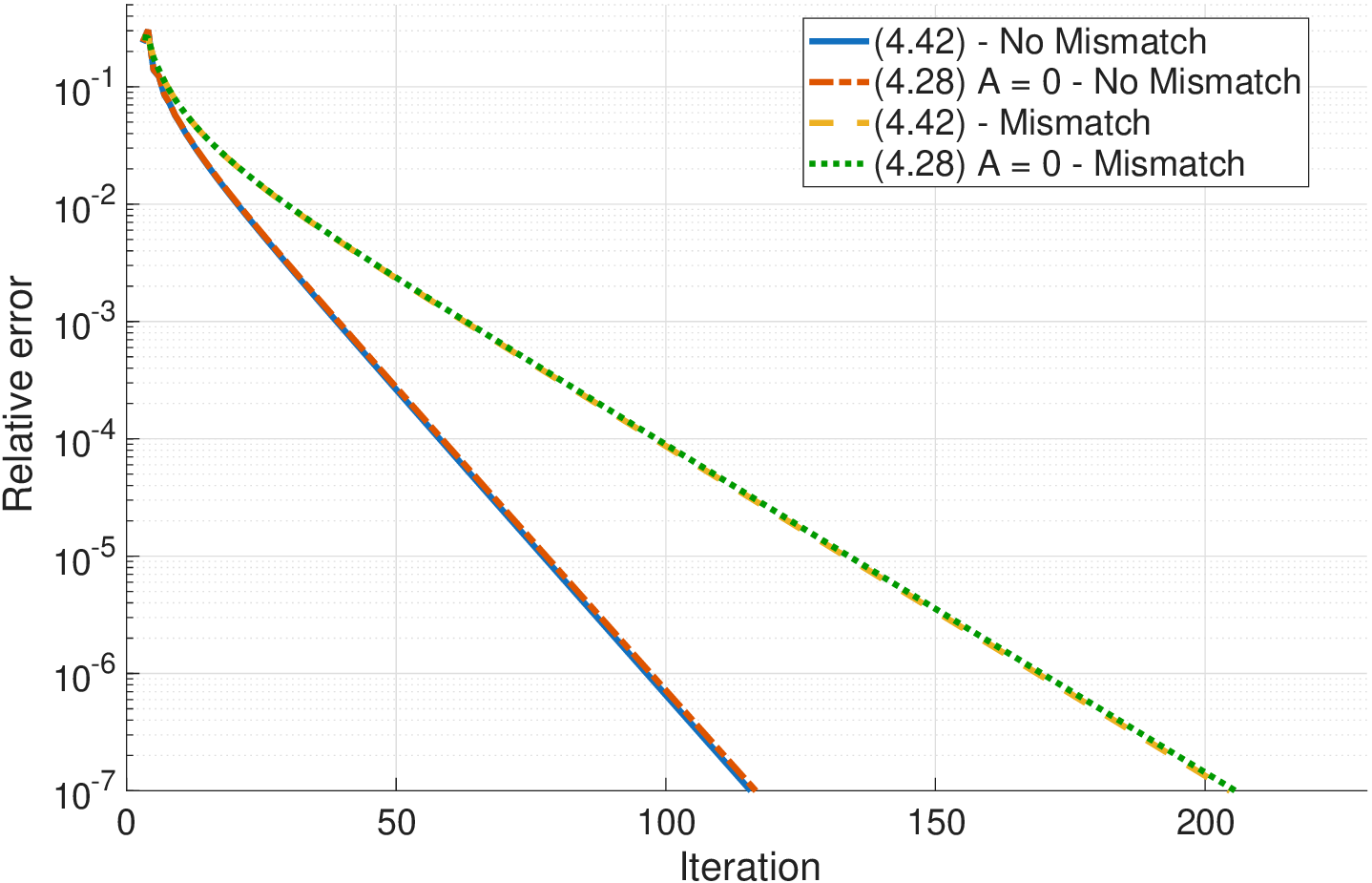}}
	\subfloat[Relative error vs time]{\label{fig:restheta13}\includegraphics[width=0.32\textwidth]{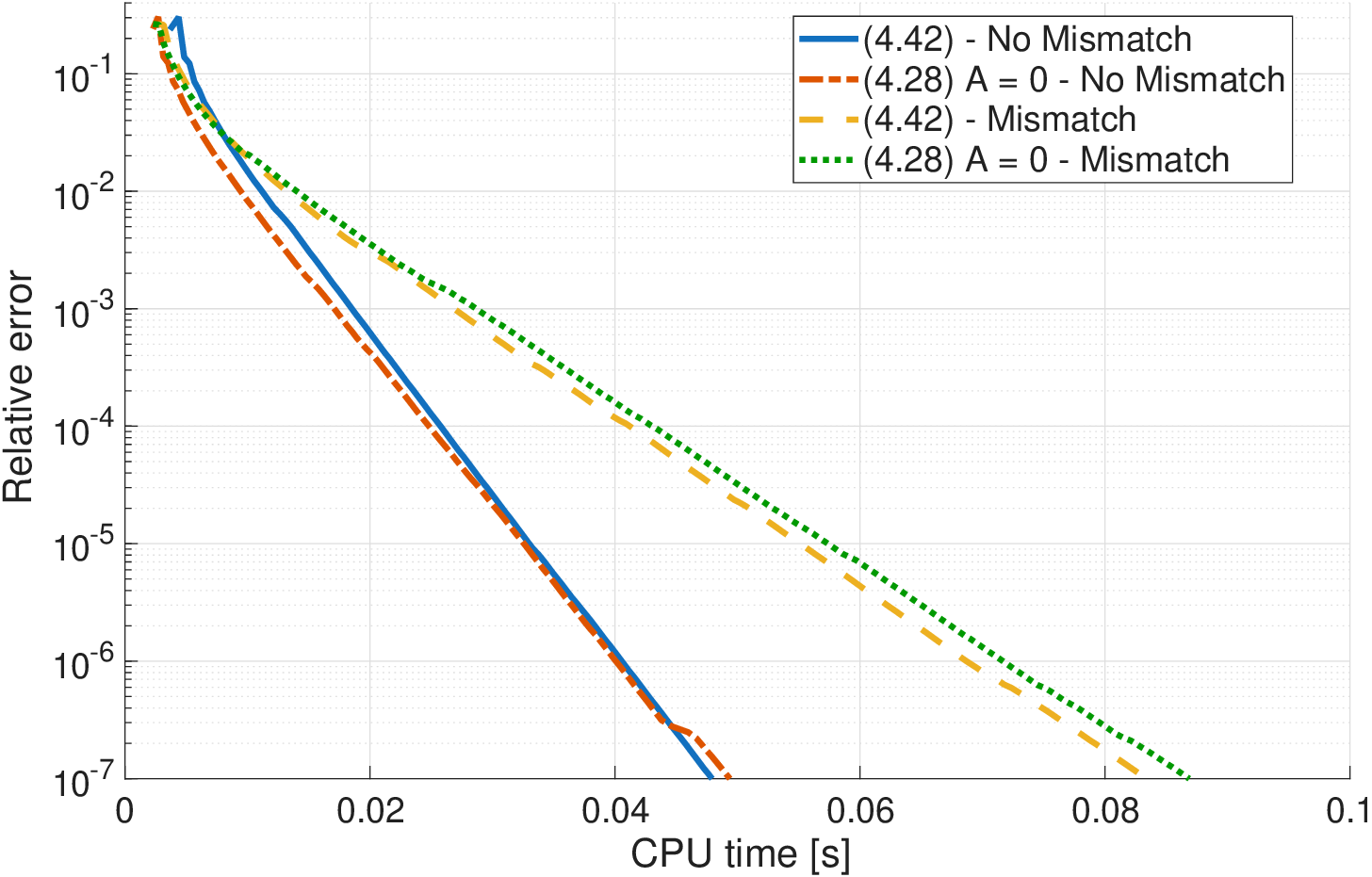}}
	\captionsetup{width=\textwidth} \caption{$\lambda = 0.005$ and $\theta=1$. The PSNR values of all restored signals are approximately 35.23~dB.}
    %: (4.42) No Mismatch 30.18 dB,  (4.28) No Mismatch 30.18 dB, (4.42) Mismatch 30.18 dB,  (4.28) No Mismatch 30.18 dB. }
	\label{fig:restheta1}
\end{figure}
	\begin{figure}
	\centering
	\subfloat[Original and restored signals]{\label{fig:restheta141}\includegraphics[width=0.32\textwidth]{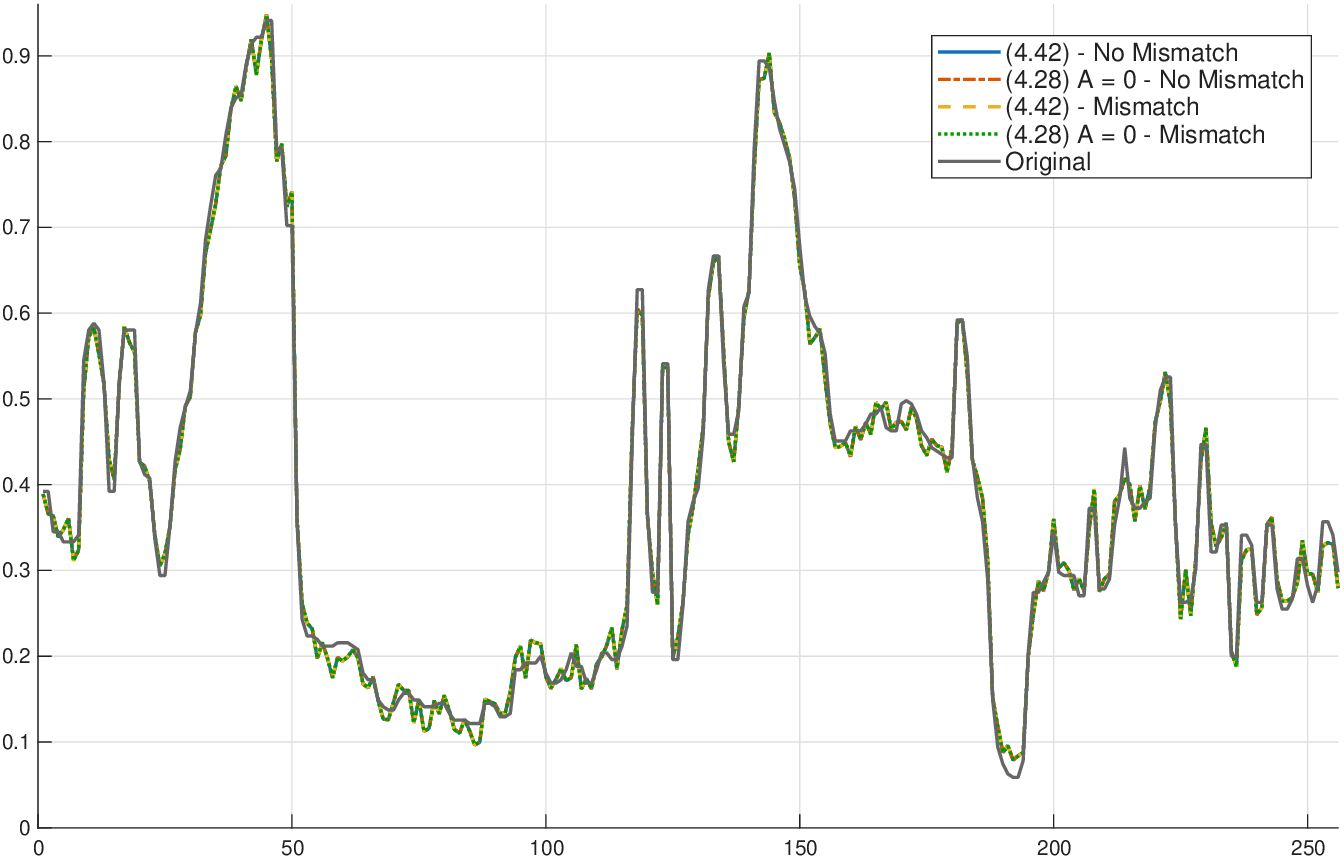}}\,
	\subfloat[Relative error vs iterations]{\label{fig:restheta142}\includegraphics[width=0.32\textwidth]{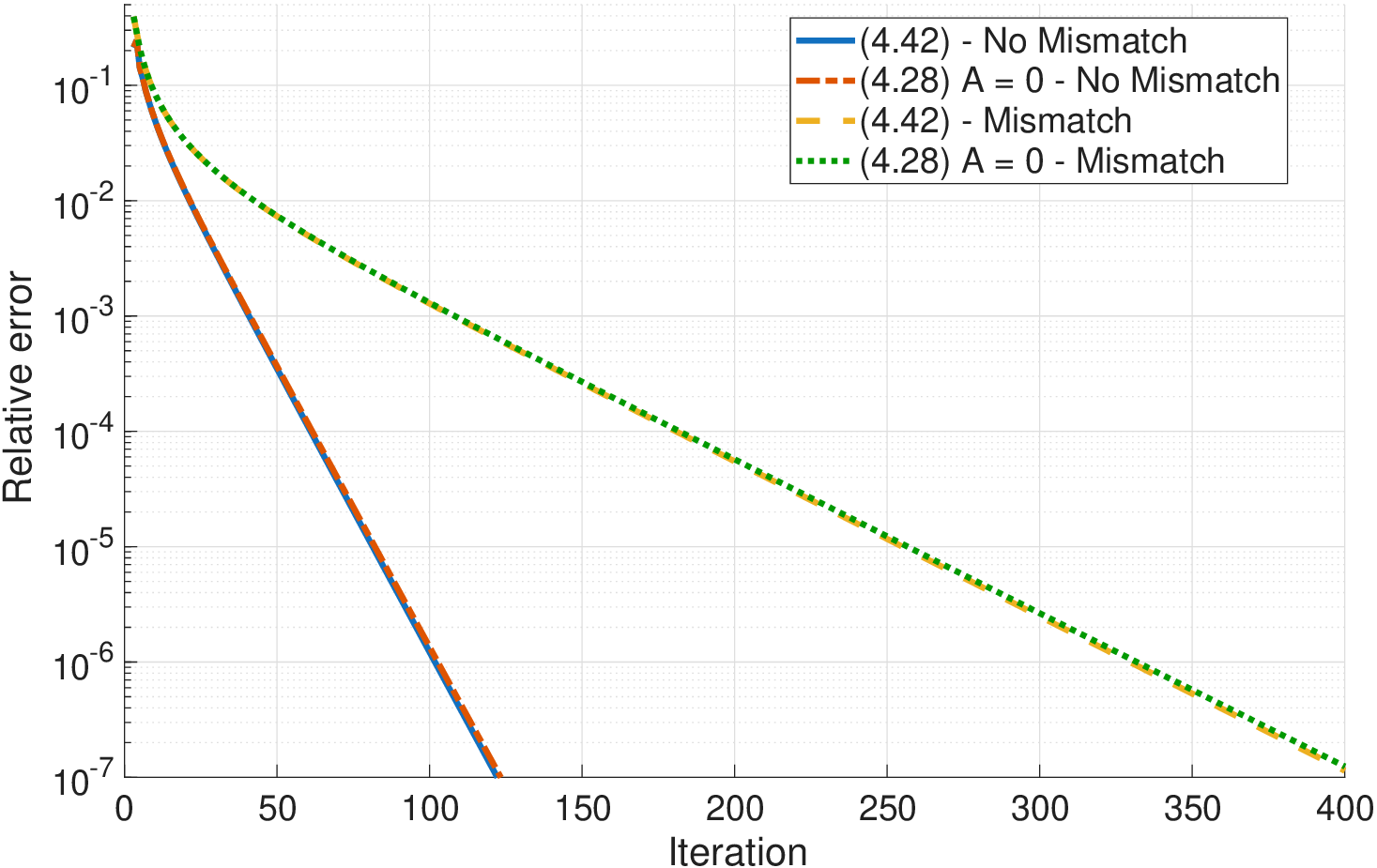}}
	\subfloat[Relative error vs time]{\label{fig:restheta143}\includegraphics[width=0.32\textwidth]{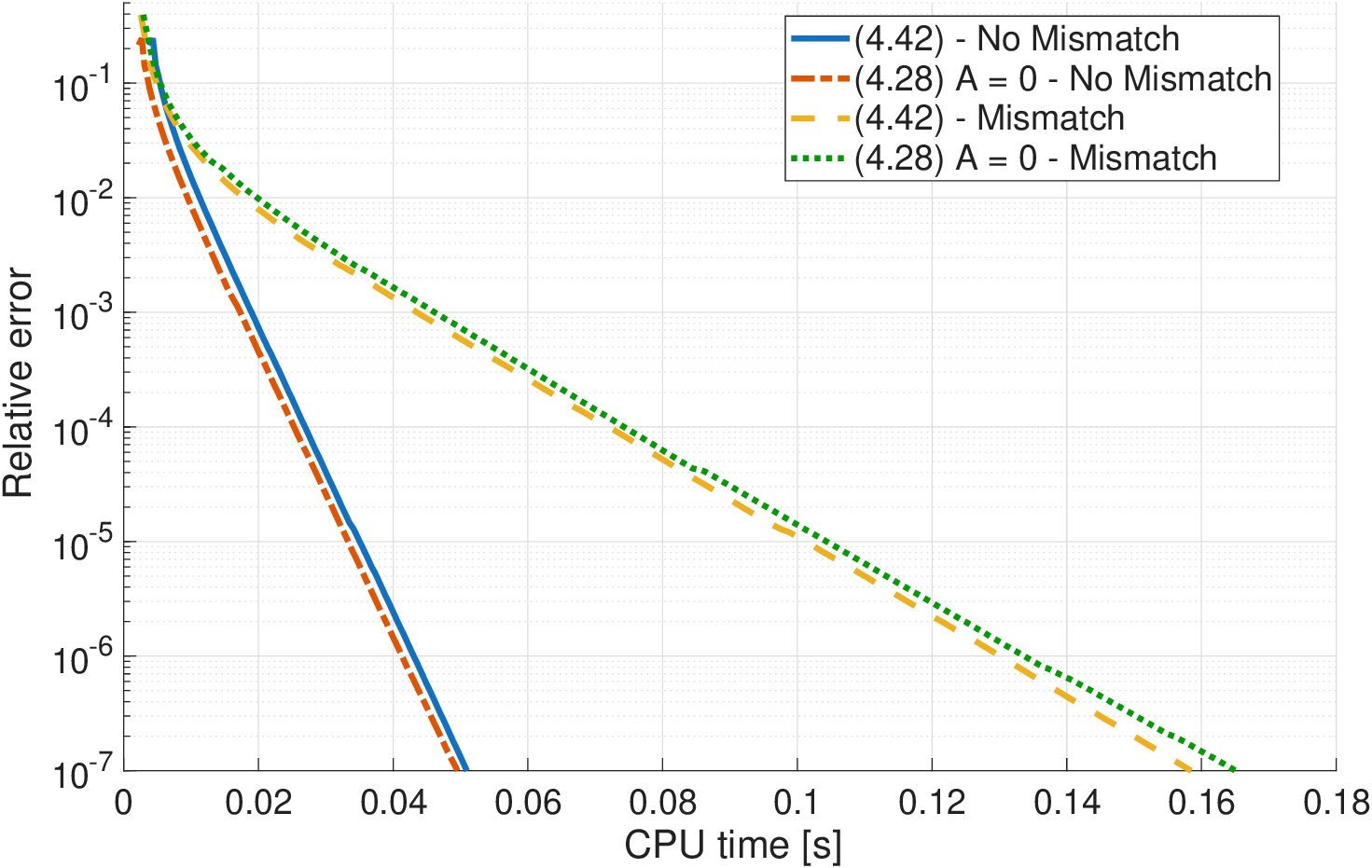}}
	\captionsetup{width=\textwidth} \caption{$\lambda = 0.005$ and  $\theta=1.3$. The PSNR values of all restored signals are approximately 35.23~dB.}
	\label{fig:restheta14}
\end{figure}
\begin{figure}
	\centering
	\subfloat[Original and restored signals]{\label{fig:restheta11p}\includegraphics[width=0.32\textwidth]{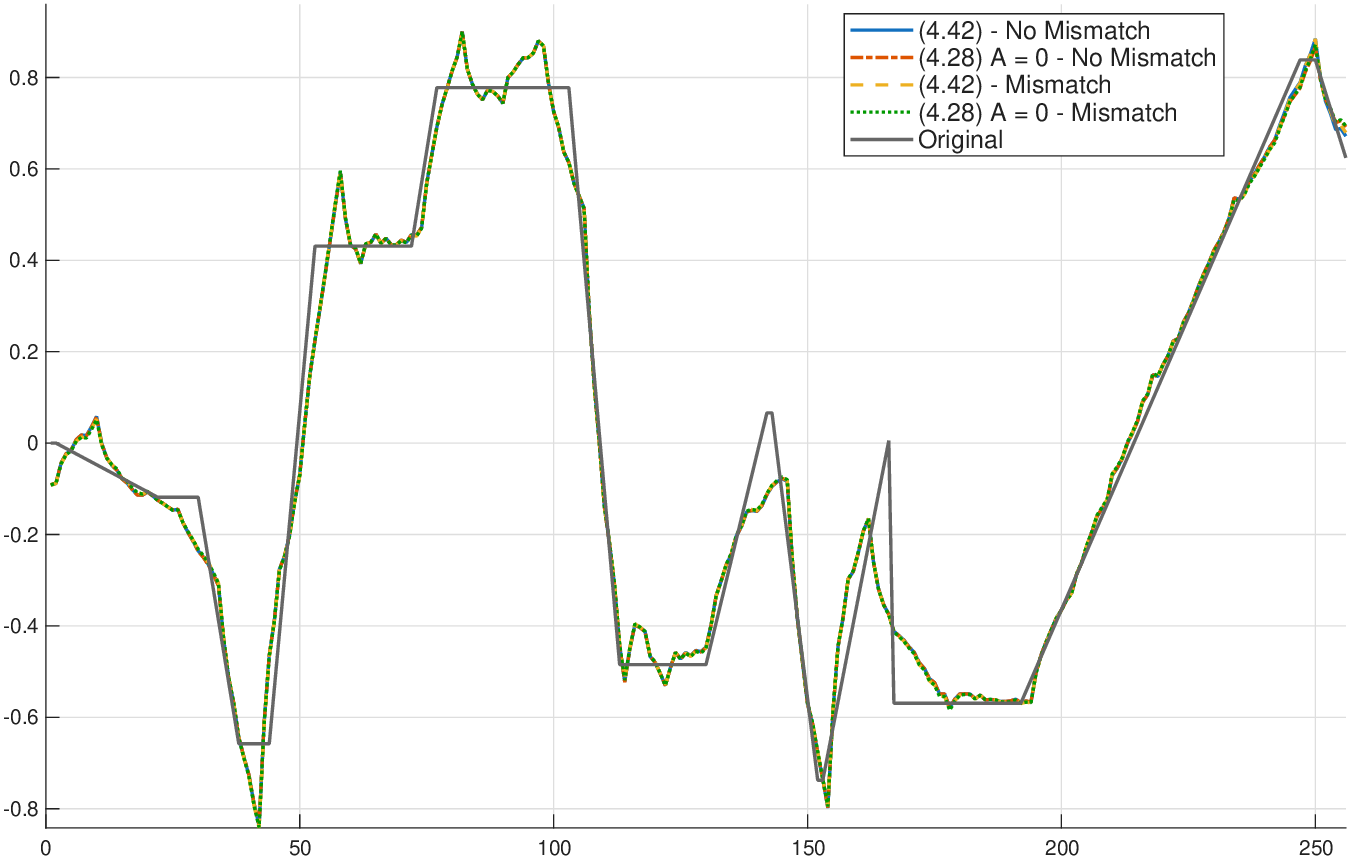}}\,
	\subfloat[Relative error vs iterations]{\label{fig:restheta12p}\includegraphics[width=0.32\textwidth]{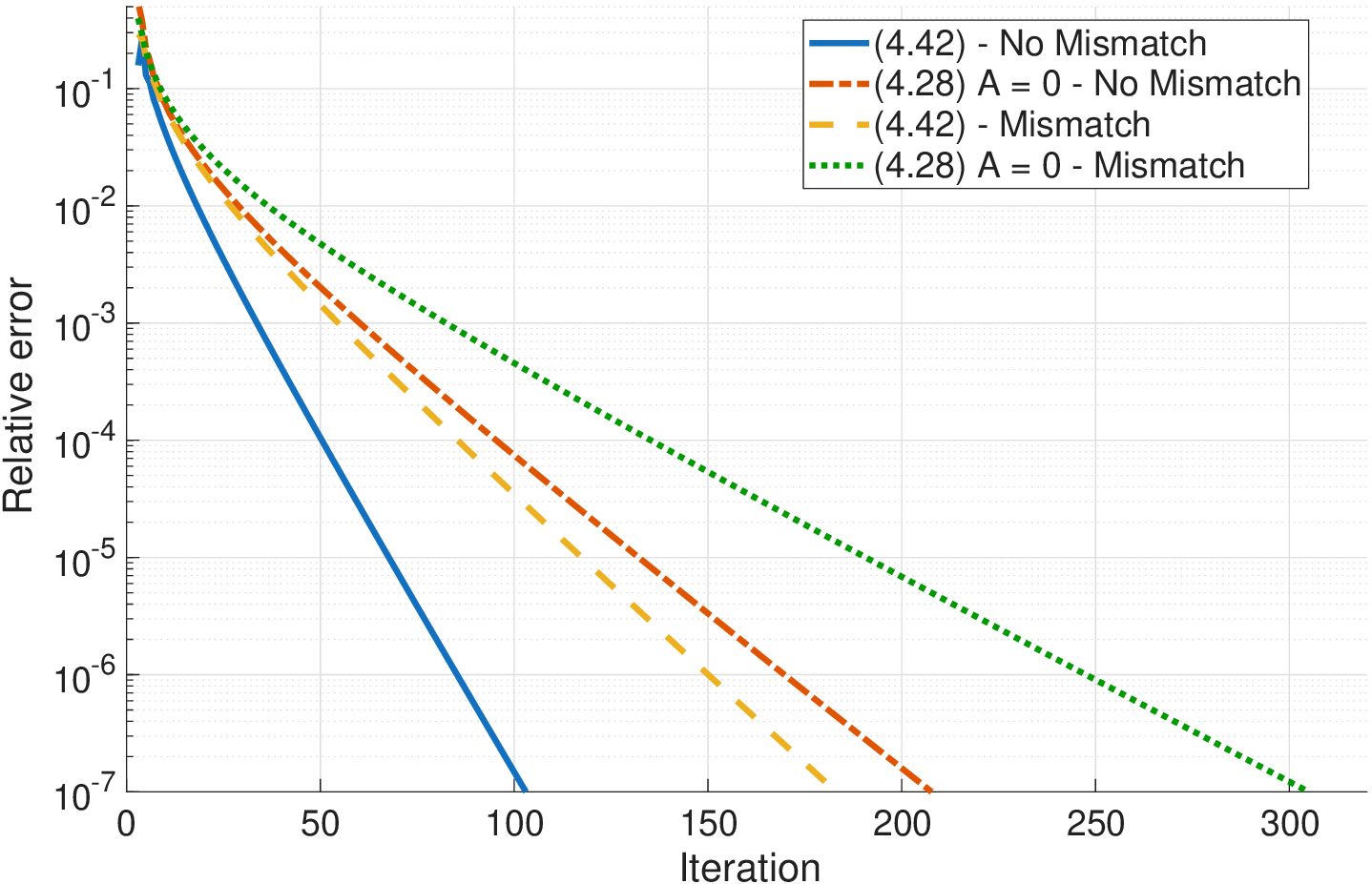}}
	\subfloat[Relative error vs time]{\label{fig:restheta13p}\includegraphics[width=0.32\textwidth]{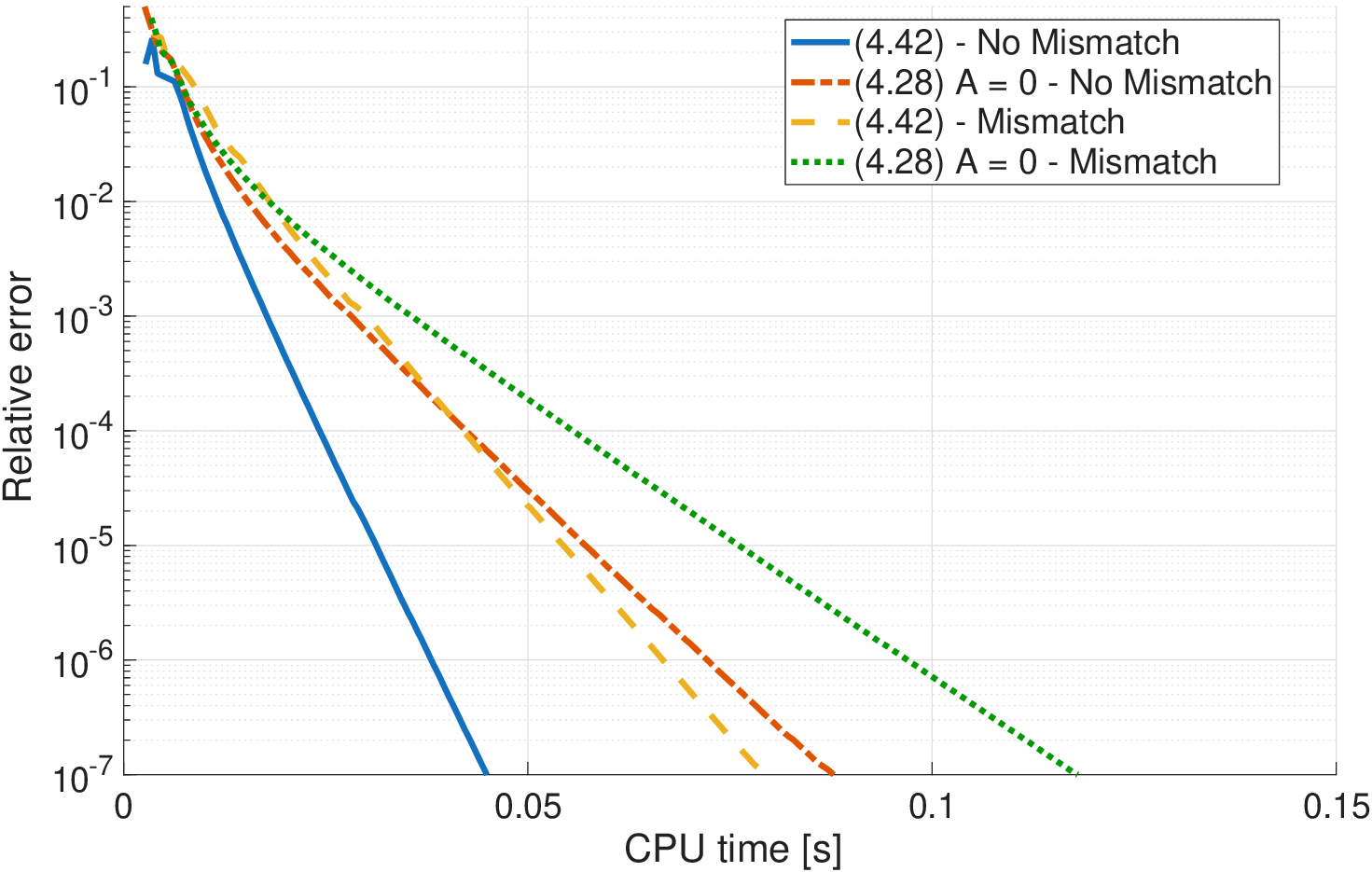}}
	\captionsetup{width=\textwidth} \caption{$\lambda = 0.5$ and $\theta=1$. The PSNR values of all restored signals are approximately 22.92~dB.}
	\label{fig:restheta1p}
\end{figure}
	\begin{figure}
	\centering
	\subfloat[Original and restored signals]{\label{fig:restheta141p}\includegraphics[width=0.32\textwidth]{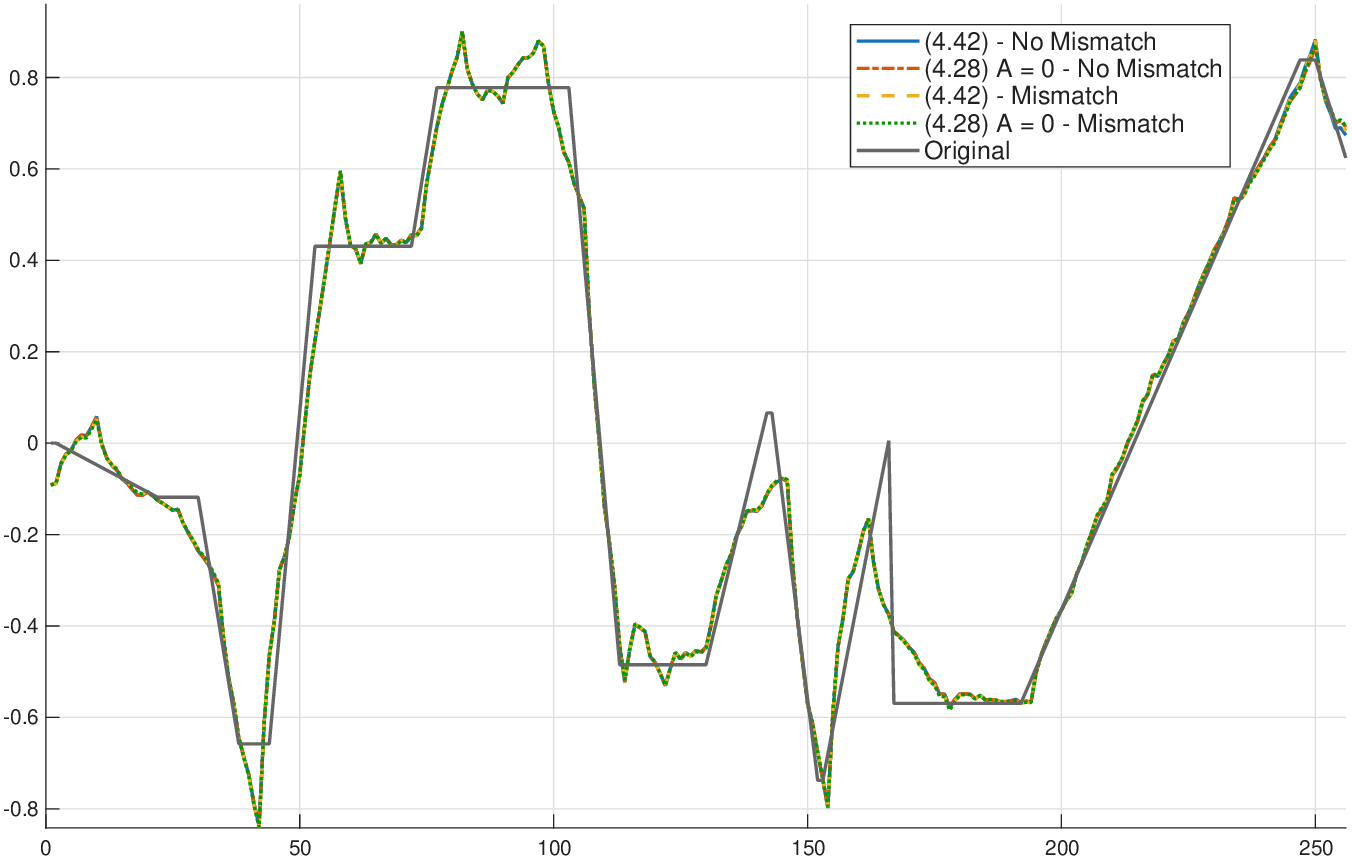}}\,
	\subfloat[Relative error vs iterations]{\label{fig:restheta142p}\includegraphics[width=0.32\textwidth]{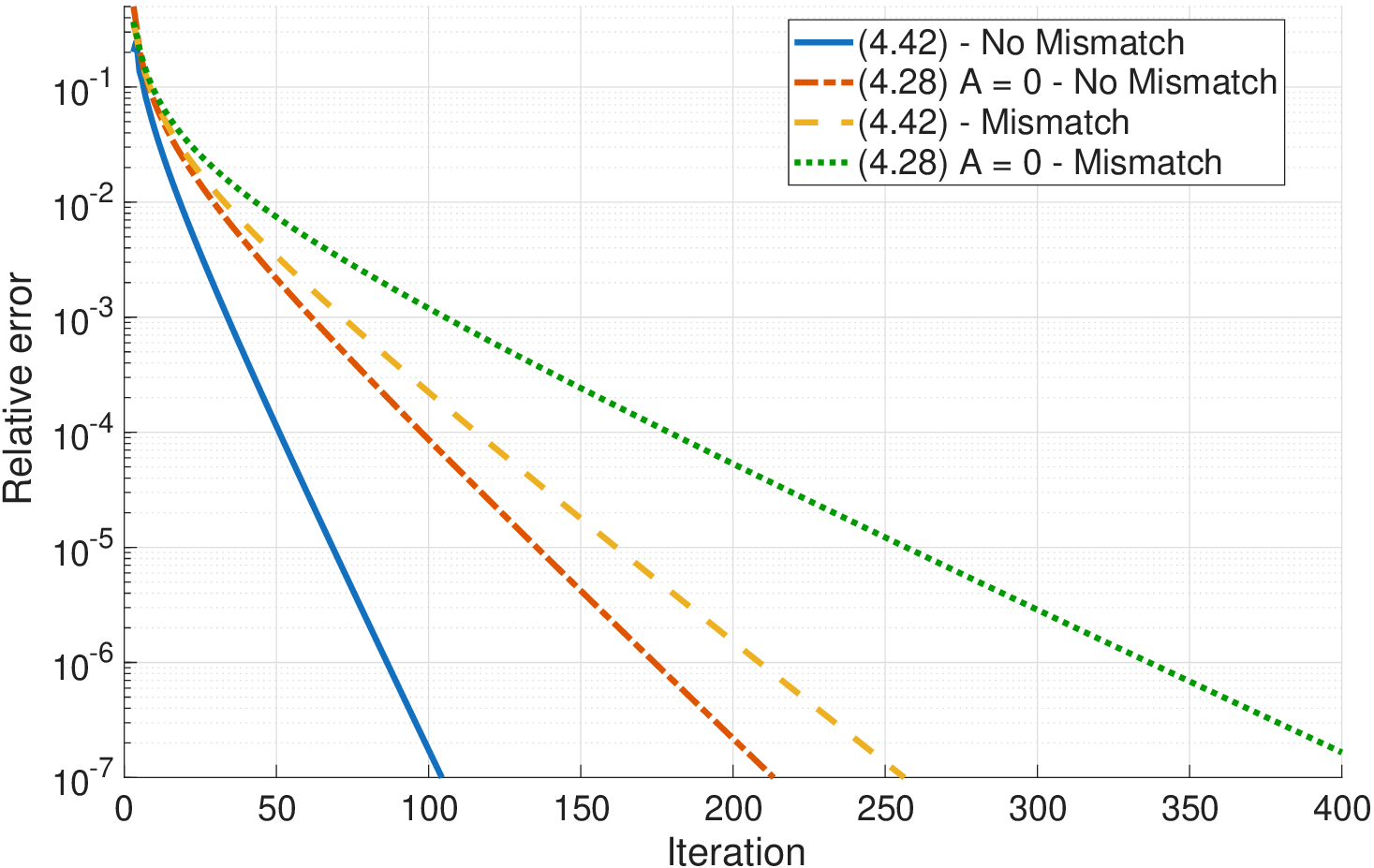}}
	\subfloat[Relative error vs time]{\label{fig:restheta143p}\includegraphics[width=0.32\textwidth]{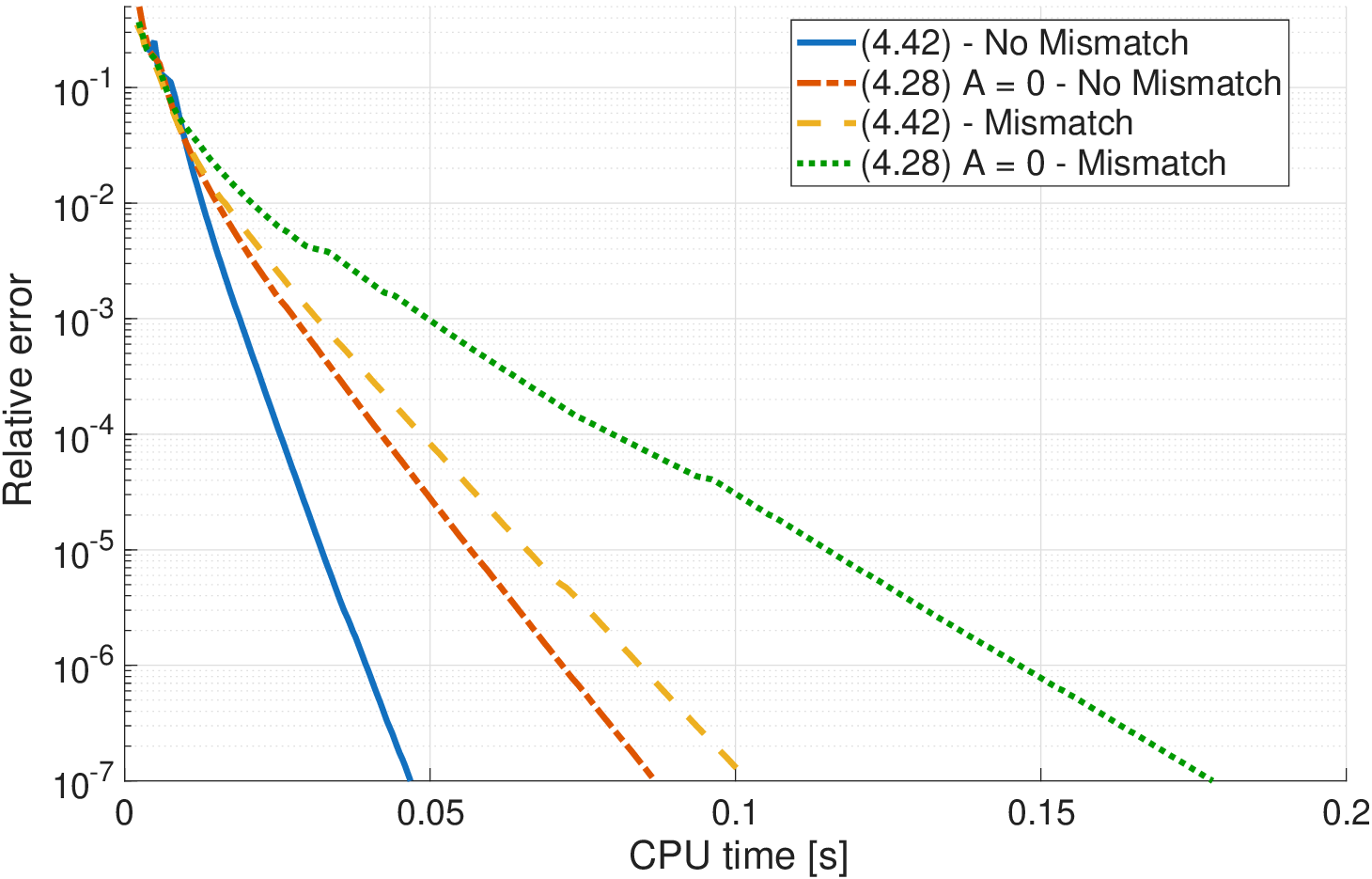}}
	\captionsetup{width=\textwidth} \caption{$\lambda = 0.5$ and $\theta=1.2$.  The PSNR values of all restored signals are approximately 22.92~dB.}	\label{fig:restheta14p}
\end{figure}
% 					\begin{table}[htbp]
% 	\centering
% 	\captionsetup{width=\linewidth}
% 	\renewcommand{\arraystretch}{1.2}
% 	\setlength{\tabcolsep}{6pt}
% 	\begin{tabular}{cccccc}
% 		Algorithm & $\theta$ & $\tau$ & IN & T & PSNR \\
% 		\toprule
% 		\multirow{3}{*}{\eqref{e:algonMT2} No Mismatch} 
% 		& 1 & 0.50 & 625 & 0.72 & 24.08 \\
% 		& 1.2 & 0.40 & 648 & 0.94 & 24.08 \\
% 		& 1.4 & 0.30 & 730 & 1.10 & 24.08 \\
% 		\hline
% 		\multirow{3}{*}{\eqref{e:algonMT} No Mismatch} 
% 		& 1 & 0.48 & 636 & 0.64 & 24.08 \\
% 		& 1.2 & 0.39 & 659 & 0.89 & 24.08 \\
% 		& 1.4 & 0.29 & 743 & 0.77 & 24.08 \\
% 		\hline
% 		\multirow{3}{*}{\eqref{e:algonMT2}} 
% 		& 1 & 0.39 & 761 & 0.34 & 24.08 \\
% 		& 1.2 & 0.29 & 836 & 0.42 & 24.08 \\
% 		& 1.4 & 0.19 & 1098 & 0.55 & 24.08 \\
% 		\hline
% 		\multirow{3}{*}{\eqref{e:algonMT}} 
% 		& 1 & 0.38 & 772 & 0.35 & 24.08 \\
% 		& 1.2 & 0.29 & 849 & 0.39 & 24.08 \\
% 		& 1.4 & 0.18 & 1120 & 0.97 & 24.08 \\
% 		\hline
% 	\end{tabular}
% 	\caption{}\label{Tab:result1}
% \end{table}     
\section{Conclusion}
In this article, we developed a nonlinear Forward-Backward algorithm for solving non-monotone inclusions involving Lipschitz continuous operators. We established the convergence of the method under a comonotonicity assumption and analyzed the conditions on the algorithm parameters in detail. In addition, we provided explicit conditions ensuring the comonotonicity of sums of operators and primal-dual operators. As a consequence, we established convergence guarantees for several methods from the literature for solving comonotone inclusions, including Forward-Backward, Chambolle--Pock, Condat--V\~u, and Forward-Reflected-Backward. Finally, we presented an application to adjoint mismatch problems and illustrated our theoretical results through a signal recovery example.

\section*{Acknowledgment}
The second author was partially supported by ANID through FONDECYT Iniciación Grant 11250164.

	\end{document}